\long\def\comment#1\endcomment{}
\newcommand{\base}{\operatorname{base}}
\newcommand\notpitchfork{\not\pitchfork}
\newcommand {\iv}{^{-1}}
\newcommand{\Bl}{{\mathrm{B}}}
\newcommand{\mn}{{\mathrm{Min}}}
\newcommand{\dv}{{\mathrm{div}}}
\newcommand{\Dv}{{\mathrm{Div}}}
\newcommand{\vf}{\varphi}
\newcommand {\nn}{\mathcal N} %% neighb.
\newcommand{\onn}{\overline{{\mathcal N}}}
\def\<{\left\langle}
\def\>{\right\rangle}
\def\MM{\mathcal {M}}
\def\MS{\mathcal {M}(S)}
\def\CC{{\mathcal C}}
\def\CCS{{\mathcal C}(S)}
\def\MCG{\mathcal{MCG}}
\def\QQ{{\mathcal Q}}
\newcommand {\pgot}{{\mathfrak p}}
\newcommand{\dcs}{\dist_{\CC (S)}}
\newcommand{\me}{\medskip}
\newcommand {\q}{\mathfrak q} %% quasi-geodesic
\newcommand {\pg}{\mathfrak g} %% geodesic
\newcommand{\la}{\langle}
\newcommand{\ra}{\rangle}
\newcommand{\pcs}{\pi_{\CC (S)}}
\newcommand{\fh}{\mathfrak{h}}
\newcommand{\fk}{\mathfrak{k}}
\def\MCG{{\mathcal MCG}}
\def\stab{{\rm stab}}
\def\ban{\mathbf{AN}}
\def\bn{\mathbf{N}}
\newtheorem{thm}{Theorem}[section]
\newtheorem{prop}[thm]{Proposition}
\newtheorem{cor}[thm]{Corollary}
\newtheorem{qn}[thm]{Question}
\newtheorem{lem}[thm]{Lemma}
\newtheorem{exa}[thm]{Example}
\newtheorem{examples}[thm]{Examples}
\newtheorem{notation}[thm]{Notation}
\theoremstyle{definition} \newtheorem{defn}[thm]{Definition}
\theoremstyle{remark}
\newtheorem{rmk}[thm]{Remark}
\def\square{\hfill${\vcenter{\vbox{\hrule height.4pt \hbox{\vrule
width.4pt height7pt \kern7pt \vrule width.4pt} \hrule
height.4pt}}}$}
\newcommand{\tsh}[1]{\left\{\kern-.9ex\left\{#1\right\}\kern-.9ex\right\}}
\newcommand{\Tsh}[2]{\tsh{#2}_{#1}}
\def\R{{\mathbb R}}
\def\free{{\mathbb F}}
\def\Z{{\mathbb Z}}
\def\N{{\mathbb N}}
\def\CC{{\mathcal C}}
\def\calc{{\mathcal C}}
\def\H{{\mathcal H}}
\def\lll{\mathcal{L}}
\def\pp{{\mathcal P}}
\def\calp{{\mathcal P}}
\def\nn{{\mathcal N}}
\def\fg{{\mathfrak g}}
\def\fc{{\mathfrak c}}
\def\fh{{\mathfrak h}}
\def\MCG{\mathcal {MCG}}
\def\nbhd{{\nn}}
\def\om{\omega}
\def\coneomega{{\rm{Cone}}_{\omega}}
\def\con{{\rm{Cone}}_{\omega}}
\def\dist{{\rm{dist}}}
\def\cone{\coneomega}
\def\ulim{\lim_\omega}
\def\<{\langle}
\def\>{\rangle}
\long\def\Restate#1#2#3#4{
\medskip\par\noindent
{\bf #1 \ref{#2} #3} {\it #4}\par\medskip }
\definecolor{darkgreen}{cmyk}{1,0,1,.2}
\def\autfn{{\rm{Aut}}(F_{n})}
\def\outfn{{\rm{Out}}(F_{n})}
\newcommand\intersect\cap
\newcommand\infinity\infty
\newcommand\wt\widetilde
\newcommand\inject\hookrightarrow
\newcommand\union\cup
\newcommand{\co}{\colon\thinspace}
\newcommand\join\Lambda
\newcommand\cross\times
\newcommand\lub\vee
\newcommand\glb\wedge%\Lambda
\newcommand\B{\mathcal{B}}
\renewcommand\paragraph[1]{\medskip\textbf{#1} }
\begin{document}
\title{Divergence, thick groups, 
and short conjugators}
\author{Jason Behrstock}\thanks{The research of the first author was partially supported
as an Alfred P. Sloan Fellow and by
NSF grant \#DMS-1006219}
\address{Lehman College,
City University of New York,
U.S.A.}
\author{Cornelia Dru\c{t}u}\thanks{The research of the second author was supported in part by
the EPSRC grant ``Geometric and analytic aspects of infinite groups" and by the project ANR Blanc ANR-10-BLAN 0116, acronyme GGAA}
\address{Mathematical Institute,
24-29 St Giles,
Oxford OX1 3LB,
United Kingdom.}
\date{October 22, 2011}%\today}

\begin{abstract} In this paper we explore relationships between divergence and thick groups, and with the same techniques we estimate lengths of shortest conjugators.
    We produce examples, for every positive integer
    $n$, of $CAT(0)$ groups which are thick of
    order $n$ and with polynomial divergence of order $n+1$, both
    these phenomena are new. With respect to thickness, these examples
    show the non-triviality at each level of
    the thickness hierarchy defined by Behrstock--Dru\c{t}u--Mosher
    in \cite{BehrstockDrutuMosher:thick1}. With respect to divergence
    our examples resolve questions of Gromov \cite{Gromov:Asymptotic} and Gersten
     \cite{Gersten:divergence3} \cite{Gersten:divergence}
    (the divergence questions were also recently and independently answered by
    Macura \cite{Macura:polydiv}). We also provide
    general tools for obtaining both lower and upper bounds on the divergence of
    geodesics and spaces, and we give the definitive lower bound for Morse
    geodesics in the $CAT(0)$ spaces, generalizing earlier results of
    Kapovich--Leeb \cite{KapovichLeeb:3manifolds} and Bestvina--Fujiwara \cite{BestvinaFujiwara:symmetric}. In the final section, we turn to the question of bounding
    the length of the shortest conjugators in several
    interesting classes of groups. We
    obtain linear and quadratic bounds on such lengths for classes of groups
    including $3$--manifold groups and
    mapping class groups (the latter gives new proofs of
    corresponding results of Masur--Minsky
    in the pseudo-Anosov case \cite{MasurMinsky:complex2} and Tao
    in the reducible case \cite{JTao:conj}).

\end{abstract}

\maketitle

\section{Introduction}

A main purpose of this paper is to provide a connection between two invariants: the divergence and the order of thickness. The \emph{divergence} arose in the study of non-positively curved manifolds and metric spaces and roughly speaking it measures the spread of geodesics. More precisely, given two geodesic rays $r,r'$ with $r(0)=r'(0)$ their divergence is defined as a map
$\dv_{r,r'} \colon\R_+ \to \R_+$, where $\dv_{r,r'} (t)$ is the
infimum of the lengths of paths joining $r(t)$ to $r'(t)$ outside the
open ball centered at $r(0)$ and of radius $\lambda t$. Here $\lambda$ is a fixed parameter in $(0,1)$ whose choice turns out to be irrelevant for the order of the divergence.

The \emph{order} of divergence and of other functions, or an \emph{upper bound} for such an order, are meant in the sense of the usual relations of order and equivalence. Given a constant $C\geq 1$ and two
functions $\R_+\to \R_+\, $, $f$ and $g$, we write $f\preceq_C g$ if
$f(x) \le Cg(Cx+C)+Cx+C\; \mbox{ for all }x\in \R_+.$ This yields
an equivalence relation on the set of functions $\R_+\to
\R_+\, $ by setting $f\asymp_C g$ if and only if
$f\preceq_C g$ and $g\preceq_C f$.
When there is no risk of confusion we do not mention the constant $C$
and remove the corresponding subscript.

In symmetric spaces of non-compact type the order of the divergence of geodesic
rays is either exponential (when the rank is one) or linear (when the
rank is at least two).  This inspired an initial thought that in the
presence of
non-positive curvature the divergence must be either linear or exponential.
See \cite{Gromov:Asymptotic} for a discussion --- an explicit
statement of this conjecture
appears in $6.B_2$, subsection ``Geometry of $\partial_T$ and Morse
landscape at infinity,'' Example (h).  In particular, Gromov stated
an expectation that all pairs of geodesic rays in the
universal cover of a closed Riemannian manifold of non-positive
curvature diverge either linearly or exponentially \cite{Gromov:Asymptotic}.

As an aside, we note that without the hypothesis of non-positive curvature
the situation is more
complicated.  For instance, in nilpotent groups with left invariant metrics,
while the maximal rate of divergence of geodesics is linear
\cite{DrutuMozesSapir}, there exist geodesic rays that diverge
sublinearly \cite[Lemma 7.1]{Pauls:NilpotentCAT(0)}.

Gersten provided the first examples of $CAT(0)$
spaces whose divergence did not satisfy the linear/exponential
dichotomy and showed that such examples are closely tied to other
areas in mathematics. The first such example was a $CAT(0)$ space admitting a
cocompact action of the group  $F_2\rtimes_\vf \Z$
with $\vf( a) = ab\, ,\, \vf (b)=b\, $ \cite{Gersten:divergence}; the
space constructed by Gersten contains rays with quadratic divergence and he
proves that no two rays in this space diverge faster than
quadratically.
Extending that work, Gersten then used divergence to distinguish
classes of closed 
3-manifolds \cite{Gersten:divergence3}. Modulo the geometrization
conjecture, he proved that the divergence of a
3-manifold is either linear, quadratic, or exponential;
where quadratic divergence occurs precisely for graph manifolds and
exponential divergence occurs precisely when at least one geometric component is
hyperbolic. Gersten asked explicitly in \cite{Gersten:divergence3} which orders
of polynomial divergence were possible in a $CAT(0)$ group.

In a different direction, the authors of the present paper together
with L. Mosher \cite{BehrstockDrutuMosher:thick1}
introduced a geometric property  called \emph{thickness} which was
proved to hold for many interesting spaces. The definition is an
inductive one and, roughly speaking, characterizes a space as
\emph{thick of order~$n$} if \emph{it is a network of subsets} which
are each thick of order $n-1$, i.e., any two points in the space can
be connected by a chain of subsets thick of order $n-1$ with each
intersecting the next in an infinite diameter set.  The base level of
the induction, i.e., spaces defined to be thick of order 0,
are metric spaces with linear divergence.  The precise
definition of \emph{thick} is given in Section~\ref{sec:networks}.

The very structure of a network turns out to be well adapted to estimates on divergence. Indeed let $X$ be a geodesic metric space which is a  $(\tau , \eta)$--tight network with respect to a collection of subsets $\lll$, in the sense of Definition \ref{def:metricnetw}. Let $\delta $ be a number in $(0,1)$ and let $\gamma \geq 0$.  For every subset $L\in \lll$ let $\Dv_\gamma^L (n; \delta )$ be the divergence function for the tubular neighborhood of $L$ of radius $\tau$, with the induced metric (see Definition \ref{Div} for the notion of divergence function of a metric space); define the \emph{network divergence} of $X$ as
    $$\Dv_\gamma^\lll (n; \delta )=\sup_{L\in\lll}\Dv_\gamma^L (n;
    \delta )\, .$$

    The following holds.

\Restate{Theorem}{thickdivergence}{}{
  The divergence function in $X$ satisfies
    \begin{equation*}
	\Dv^{X}_\gamma (n; \delta ) \preceq_C n\, \Dv_\gamma^\lll (n;
	\delta ) \end{equation*} where the constant $C$ only depends on
	the constants $\tau , \eta , \delta$ and $\gamma$.}

Groups and spaces which are thick of order 0 or 1 both yield very
rich classes of examples, see e.g., \cite{BehrstockDrutuMosher:thick1},
\cite{BehrstockCharney}, \cite{BrockMasur:WPrelhyp},
\cite{DrutuSapir:TreeGraded}. In the present paper we give the first
constructions of groups which are thick of order greater than 1;
indeed, for every positive integer $n$ we produce infinitely many
quasi-isometry classes of groups which are thick of order $n$, as
explained in the following theorem.  Moreover, using a
close connection between order of thickness and order of divergence we
establish that the very same classes of examples have polynomial
divergence of degree $n+1$.
We note that the case $n=0$ of this theorem is trivial and the
case $n=1$ follows from the above mentioned results in
\cite{Gersten:divergence3} combined with results from
\cite{BehrstockDrutuMosher:thick1} and
\cite{BehrstockNeumann:qigraph}. The following is established in
Section~\ref{sec:highorderthick}.

\begin{thm}\label{thm:introcat0}
For every positive integer $n$ there exists an infinite family of
pairwise non-quasi-isometric finitely generated groups which are each:
\begin{enumerate}
\item CAT(0) groups;
\item thick of order n;
\item with divergence of order $n+1$.
\end{enumerate}
\end{thm}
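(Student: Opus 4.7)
The plan is to construct the groups inductively on $n$. For the base case $n=1$, take $G_1$ to be a graph manifold group, or equivalently Gersten's example $F_2 \rtimes_\varphi \Z$ with $\varphi(a)=ab$, $\varphi(b)=b$; these are known to be $\CAT(0)$ groups that are thick of order $1$ with quadratic divergence. For the inductive step, given a $\CAT(0)$ group $G_{n-1}$ that is thick of order $n-1$, has divergence of order $n$, and contains an undistorted central $\Z$-subgroup $A_{n-1}$, form
\[
G_n := (G_{n-1}\times \Z)\ast_{A}(G_{n-1}\times \Z),
\]
where $A\cong\Z^2$ is generated by $A_{n-1}$ together with the new $\Z$-factor, identified across the amalgamation by swapping the two $\Z$-factors. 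The new central $\Z$-factor serves as $A_n$ for the next step of the induction.

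I would then verify the three required properties in order. \emph{CAT(0):} each vertex group acts geometrically on $X_{n-1}\times\R$, and the amalgamation along the convex flat subspace corresponding to $A\cong\R^2$ yields a $\CAT(0)$ tree-of-spaces by the standard gluing theorem. \emph{Thick of order $n$:} the collection $\lll$ of cosets of the vertex subgroups $G_{n-1}\times\Z$ is a network in the sense of Definition \ref{def:metricnetw}; each element of $\lll$ is quasi-isometric to $G_{n-1}\times\Z$ (which remains thick of order $n-1$, since taking a direct product with $\Z$ preserves the order of thickness), adjacent pieces meet in cosets of the $\Z^2$-gluing subgroup (hence infinite diameter), and the Bass--Serre tree ensures any two points can be connected by a chain of such pieces. \emph{Upper bound on divergence:} Theorem \ref{thickdivergence} gives $\Dv^{G_n}_\gamma(r;\delta)\preceq r\cdot\Dv^{\lll}_\gamma(r;\delta)$, and each piece has divergence $\asymp r^n$ by induction, so $\Dv^{G_n}_\gamma(r;\delta)\preceq r^{n+1}$.

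The main obstacle is the matching lower bound $\Dv^{G_n}_\gamma(r;\delta)\succeq r^{n+1}$. The strategy is to select two geodesic rays $\rho_1,\rho_2$ whose initial segments lie in distinct vertex spaces joined along a single gluing flat $F\cong\R^2$, and then argue that any path joining $\rho_1(r)$ to $\rho_2(r)$ while avoiding the ball of radius $\lambda r$ about the basepoint must either cross $F$ far from the basepoint, or detour through additional vertex spaces. In the first case, the path is forced to traverse a region in a single vertex space between two points that are at distance $\succeq r$ apart while avoiding a large sub-ball there, and the inductive hypothesis produces a lower bound of $\succeq r^n$ on that segment, iterated over $\succeq r$ roughly parallel copies of $F$ that the path must avoid, giving $\succeq r^{n+1}$. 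In the second case, each additional crossed gluing flat contributes a comparable cost, controlled via the inductive divergence lower bound inside each vertex piece. Making this rigorous is the technical heart of the argument and requires the combinatorial machinery developed in Section \ref{sec:networks} combined with a careful analysis of geodesic detours across the Bass--Serre tree.

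Finally, to obtain infinitely many quasi-isometry classes at each level, vary the base $G_1$ over the infinitely many non-quasi-isometric graph manifold groups (distinguished by the results of Behrstock--Neumann), and observe that the iterated amalgamation construction is sufficiently rigid — for instance, through the quasi-isometry invariance of divergence together with the quasi-isometric classification of the vertex spaces — that non-quasi-isometric base groups produce non-quasi-isometric $G_n$'s. As anticipated, the hardest step is the sharp lower bound on divergence, since the upper bound follows almost formally from Theorem \ref{thickdivergence} once the network structure is in place.
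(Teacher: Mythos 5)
Your construction breaks at the very first inductive step, and the reason is the sentence ``taking a direct product with $\Z$ preserves the order of thickness.'' This is false, and indeed it fails in the most drastic way possible: $G_{n-1}\times\Z$ has the central infinite-order element coming from the $\Z$-factor, so (as recalled in Section~\ref{subsec:morsedefs} of the paper, and via Proposition~\ref{lm02}) its divergence is linear and all of its asymptotic cones are without cut-points. Thus $G_{n-1}\times\Z$ is \emph{wide}, i.e.\ strongly thick of order \emph{zero}, regardless of how large the order of thickness of $G_{n-1}$ was. The ``pieces'' $\lll$ of your proposed network are therefore all thick of order zero, so the amalgam $G_n$ can be thick of order at most $1$, and Theorem~\ref{thickdivergence} applied to these pieces gives $\Dv^{G_n}\preceq n\cdot n=n^2$ --- a quadratic, not degree-$(n+1)$, upper bound. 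In other words your $G_n$ is (at best) yet another example of a thick-of-order-one group with quadratic divergence, for every $n$. There is a second, related inconsistency: you need $G_n$ to retain a central $\Z$-subgroup $A_n$ to feed the induction, but a finitely generated group with infinite center is wide and so has linear divergence, directly contradicting the requirement that $G_n$ have divergence $\asymp x^{n+1}$. (In fact, in your amalgam the whole edge group $A\cong\Z^2$ ends up central: each generator of $A$ is central in one vertex copy by construction, and via the flip is identified with the central $A_{n-1}$ of the other copy, so it commutes with both vertex groups and hence with all of $G_n$.) Also note that your stated base cases --- a graph manifold group, or Gersten's $F_2\rtimes_\vf\Z$ --- have trivial center, so the inductive hypothesis ``contains an undistorted central $\Z$-subgroup'' is not even satisfied at $n=1$.

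The paper's construction avoids all of this by amalgamating over an infinite \emph{cyclic} subgroup, not $\Z^2$, and crucially that cyclic subgroup is generated by a \emph{Morse} element rather than a central one: $G_{n+1}=G_n\ast_{C_n}G_n'$ where $C_n=\langle\gamma_n\rangle$ and $\gamma_n$ corresponds to a closed geodesic $\fg_n$ whose lift is Morse with divergence $\asymp x^{n+1}$. The vertex pieces are thus copies of $G_n$ itself (not $G_n\times\Z$), so they genuinely carry the order-$n$ thickness and divergence $\asymp x^{n+1}$ needed for the inductive upper bound. The lower bound is then supplied by Proposition~\ref{HOexamplediv}: the new Morse geodesic $\widetilde{\fg}_{n+1}$ (a loxodromic axis for the Bass--Serre tree action) crosses $\asymp x$ separating copies of $\widetilde{\fg}_n$, and between each consecutive pair of crossings a detour of length $\succeq x^{n+1}$ is forced by the inductive divergence bound, yielding $\succeq x^{n+2}$. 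Your sketch of the lower bound via detours across gluing flats is in the right spirit, but it is attached to a construction that cannot deliver the inductive hypotheses it needs, so it cannot be rescued as stated. If you want to fix the approach, you would need to amalgamate over a quasi-geodesic that is Morse in the vertex groups (never central), and keep the vertex groups themselves as the previous stage of the induction rather than thickening them with a $\Z$-factor.
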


Natasa Macura has recently given an independent construction of examples of CAT(0) groups
with divergence of order $n$ for all positive integers
\cite{Macura:polydiv}.  Our examples and Macura's both give
complete answers to the questions of Gromov \cite{Gromov:Asymptotic}
and Gersten \cite{Gersten:divergence3, Gersten:divergence} discussed above.

The upper bound on divergence in Theorem~\ref{thm:introcat0} will
follow from  Theorem~\ref{thickdivergence}. The lower bound both for divergence and for the order of thickness
is proved by exhibiting a bi-infinite geodesic with divergence
precisely $x^{n+1}$.

 It would be interesting to know if either in
general, or under some reasonable hypotheses, the order of thickness
and the divergence are directly correlated, i.e., can the order of
thickness be shown to provide a lower bound in addition to the upper
bound which in this paper we show holds in general. A homogeneous version of this question is:

\begin{qn}\label{quest:thickvsdiv}
If a group is thick of order $n$ must its
divergence be polynomial of degree exactly $n+1$?
\end{qn}

More specific questions on the possible orders of divergence include:

\begin{qn}\label{quest:cat0gap}
Are there examples of $CAT(0)$-groups whose
divergence is strictly between $x^{n}$ and $x^{n+1}$ for some $n$?
\end{qn}

\begin{qn}
What are the $\asymp$--equivalence classes of divergence functions of
$CAT(0)$-groups?
\end{qn}

Our inductive construction in Theorem \ref{thm:introcat0} can be made to yield infinitely many
quasi-isometry classes because the quasi-isometry type of the base is an invariant of the space,
and this base is a $CAT(0)$ $3$--dimensional graph manifold. According to the main
result of \cite{BehrstockNeumann:qigraph}
we have infinitely many quasi-isometry classes of  $3$--dimensional graph manifolds to choose from.

Another geometric feature relevant for divergence is the presence of
\emph{Morse quasi-geodesics}.  These are quasi-geodesics which
represent in some sense ``hyperbolic directions,'' namely, they
satisfy the Morse Lemma: any $(K,C)$--quasi-geodesic $\gamma$ with
endpoints on the given quasi-geodesic $\q$ is contained in a uniformly
bounded tubular neighborhood of $\q$.

Morse quasi-geodesics and their relationship to divergence are
studied in Section~\ref{sect:Morsediv}. One topic discussed there is the following natural refinement of
Question~\ref{quest:cat0gap}.

\begin{qn}
    If $X$ is a $CAT(0)$ space,
    can the divergence of a Morse geodesic be greater than $x^{n}$
    and less than $x^{n+1}$?
\end{qn}

The following theorem provides a negative answer for the case $n=1$;
its statement is the most general version of previous known results
which required extra assumptions such as periodicity of the geodesic
or properness of the space $X$, see \cite{KapovichLeeb:3manifolds} or
Proposition \ref{lhalf}, and also \cite{BestvinaFujiwara:symmetric}.

\Restate{Theorem}{cor2}{}{
Let $\q$ be a Morse quasi-geodesic in a $CAT(0)$ metric space $(X,
\dist )$.  Then the divergence of $X$ is $\succeq x^2$.
}

Further results and questions on the relation between Morse geodesics and divergence may be found in Section~\ref{sect:Morsediv}.

In Section~\ref{sec:conjug}, we study the question of finding
shortest conjugators for Morse elements, in $CAT(0)$ groups and in groups with ``(non-positive curvature)-like behavior''.
We generalize results from the $CAT(0)$ setting to Morse geodesics in
other groups. In that section we prove the following, which we then
apply to graph manifolds in Corollary~\ref{thm:conjgraphmfld}. Recall that an action of a group $G$ on a graph $X$ is called $l$--{\em
acylindrical} for some $l>0$ (or simply \emph{acylindrical}) if the
stabilizers in $G$ of pairs of points in $X$ at distance $\ge l$ are
finite of uniformly bounded sizes.  Recall also that in a finitely
generated group $G$, for a finite generating set $S$ which we often
do not explicitly mention, we denote by
$|g|_S$ or simply by $|g|$ the distance from $1$ to $g\in G\, $ in the
word metric corresponding to $S$.

\Restate{Theorem}{thm:conjacyltree}{}{
     Let $G$ be a group acting cocompactly and $l$-acylindrically on a simplicial tree $T$. For every $R>0$ and for a fixed word metric on $G$ let $f(R)$ denote the supremum of all diameters of intersections $\stab (a)\cap \nn_R( g\stab (b))$, where $a$ and $b$ are vertices in $T$ at distance at least $l$, and $g\in G$ is at distance $\leq R$ from $1$.

There exists a constant $K$ such that if
two loxodromic elements $u, v$ are conjugate in $G$ then there exists $g$
conjugating $u,v$ such that
$$
|g| \leq f(|u|+|v|+K)+|u|+|v|+2K\, .
$$
}
Note that in this theorem we cannot simply replace ``loxodromic'' by
``Morse,'' since there might exist Morse elements of
$G$ in the stabilizer of a vertex, e.g., this is the case if
$G$ is free and $T$ is the quotient by a free factor.

Two natural questions related to the above result can be asked.

\begin{qn}
Can Theorem \ref{thm:conjacyltree} be extended to actions that are not cocompact~?
\end{qn}

\begin{qn}
What are the possible values of the function $f(R)$ in Theorem \ref{thm:conjacyltree}~?
\end{qn}

As a consequence of Theorem~\ref{thm:conjacyltree} we obtain the following.

\begin{cor}\label{thm: introconjgraphmfld}
Let $M$ be a 3-dimensional prime manifold, and let $G$ be its
fundamental group. For every word metric on $G$ there exists a constant $K$ such that the following holds:

\begin{enumerate}
  \item if $u, v$ are two Morse elements conjugate in $G$ then there exists $g$
conjugating $u,v$ such that
$$
|g| \leq K(|u|+|v|)\, .
$$
  \item If $u, v$ are
two arbitrary elements conjugate in $G$ then there exists $g$
conjugating $u,v$ such that
$$
|g| \leq K(|u|+|v|)^2\, .
$$
\end{enumerate}

\end{cor}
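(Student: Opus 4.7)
The strategy is to apply Theorem~\ref{thm:conjacyltree} to the action of $G=\pi_1(M)$ on the Bass--Serre tree $T$ of the geometric (JSJ) decomposition of $M$, after disposing of the geometric case by hand. By Geometrization, if $M$ is closed hyperbolic then $G$ is word hyperbolic, every infinite-order element is Morse, and both bounds follow from hyperbolicity. If $M$ admits a Seifert or Sol geometry then $G$ is a central $\Z$-extension of a $2$-orbifold group or virtually polycyclic, no element is Morse (each commutes with a $\Z$-fiber), so (1) is vacuous and (2) follows from the explicit structure theory.

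Now assume $M$ has nontrivial JSJ. The action $G\curvearrowright T$ is cocompact (quotient is finite) and $l$-acylindrical for some $l>0$, because edge stabilizers are peripheral $\Z^2$'s and fixing a long tree segment would force centralization of two non-commensurable peripheral $\Z^2$'s in a single vertex group, which is geometrically impossible. Any Morse element $u\in G$ is either loxodromic on $T$, or elliptic; in the elliptic case $u\in G_v$ must be a non-peripheral element of a hyperbolic vertex group (a Seifert vertex stabilizer would force $u$ to commute with the central $\Z$-fiber, contradicting the Morse property).

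The key technical step is the linear estimate $f(R)\le CR$ for the function $f$ of Theorem~\ref{thm:conjacyltree}. This follows from two facts: vertex stabilizers are quasi-isometrically embedded in $G$; and by acylindricity, for vertices $a,b$ at tree distance $\ge l$, the set $\stab(a)\cap\nn_R(g\stab(b))$ is coarsely confined to an $R$-neighborhood of an edge-stabilizer coset $\Z^2$, whose diameter grows linearly in $R$. Plugging this into Theorem~\ref{thm:conjacyltree} yields $|g|\le K(|u|+|v|)$ for every conjugate loxodromic pair. For the elliptic subcase of (1), acylindricity places $u,v$ (after a bounded tree adjustment) in a common hyperbolic vertex group, where relative hyperbolicity with respect to peripheral tori gives linear conjugators for non-peripheral elements; this completes (1).

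For (2) with $u,v$ loxodromic the linear bound already suffices. For elliptic $u,v$, I would first use acylindricity to find a short $g_0$ with $g_0 u g_0\inv$ in the same vertex stabilizer $G_b$ as $v$, then solve the shortest-conjugator problem inside $G_b$. In a hyperbolic piece this is linear by relative hyperbolicity. In a Seifert piece $G_b$, the central extension $1\to\Z\to G_b\to Q\to 1$ reduces the problem to conjugation in the hyperbolic $2$-orbifold group $Q$ (linear) followed by a central correction which, although linear in the intrinsic $G_b$-metric, may become quadratic in the ambient $G$-metric; this produces the bound $|g|\le K(|u|+|v|)^2$. The principal obstacle is the linear estimate $f(R)\le CR$, which requires a uniform coarse-intersection analysis across Seifert and hyperbolic pieces; a secondary difficulty is controlling the elliptic conjugator inside a Seifert piece against the distortion of its center in the ambient metric.
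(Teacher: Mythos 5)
Your overall strategy matches the paper's: apply Theorem~\ref{thm:conjacyltree} to the Bass--Serre tree of the geometric decomposition, handle the elliptic Morse case via relative hyperbolicity of hyperbolic pieces, and treat the geometric cases and the Seifert elliptic case separately. However, there is a genuine gap at the step you yourself identify as ``the key technical step.''

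Your justification for $f(R)\le CR$ does not go through as stated. You argue that $\stab(a)\cap\nn_R(g\stab(b))$ is ``coarsely confined to an $R$-neighborhood of an edge-stabilizer coset $\Z^2$, whose diameter grows linearly in $R$.'' But a $\Z^2$ coset has infinite diameter, so its $R$-neighborhood also has infinite diameter; being near such a coset gives no control at all. The point is not that the coarse intersection is near a peripheral $\Z^2$ --- it is that when $a$ and $b$ are at tree distance at least $l\ge 3$, the coarse intersection of $\stab(a)$ and $g\stab(b)$ is \emph{bounded}. The paper obtains this from the Kapovich--Leeb theorem that two geometric components of $\widetilde M$ separated by two flats have nearest-point projections onto each other of uniformly bounded diameter $\kappa$ (transported to the possibly non-NPC case via \cite[Theorem~1.1]{KapovichLeeb:3manifolds}). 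From this one gets $f(R)\le 2R+\kappa$. Your argument needs exactly this bounded-projection input and does not supply or cite it; acylindricity by itself is not enough, since acylindricity concerns the finiteness of the stabilizer of a long segment rather than the size of the coarse intersection of two far-apart vertex stabilizers in the word metric.

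Two further, smaller remarks. For the elliptic Morse case, the paper does not simply reduce to a linear conjugator bound inside the vertex group; it builds an explicit $(L,C)$-quasi-geodesic in $\widetilde M$ out of saturated axes and uses \cite[Lemma~8.12]{DrutuSapir:TreeGraded}, together with pre-composition by a power of $v$ to line up axis segments. Your ``relative hyperbolicity gives linear conjugators for non-peripheral elements'' is the right intuition but papers over the same construction. For part~(2) in the geometric Seifert/Nil case, the paper cites \cite[Proposition~2.1.1]{JOR-conj} for the 2-step nilpotent case and \cite{Sale:thesis} for Sol; your phrase ``follows from the explicit structure theory'' is compatible with this but leaves the actual quadratic bound for nilmanifolds unaddressed, which is where the quadratic exponent really comes from in the geometric case.
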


We also give a new unified proof of the following theorem, first proved in the
pseudo-Anosov case by Masur--Minsky \cite[Theorem~7.2]{MasurMinsky:complex2} and later
extended to the reducible case by Tao \cite[Theorem~B]{JTao:conj}.

\Restate {Theorem}{thm:shortestgen}{}{
Given a surface $S$ and a finite generating set $F$ of its mapping class group $\MCG (S)$ there exists a constant $C$ depending only on $S$ and on $F$ such that for every two conjugate
elements of infinite order $u$ and $v$ in $\MCG (S)$ there exists $g\in \MCG (S)$ satisfying
$v=gug\iv$ and
$$
|g| \leq C \left( |u| + |v| \right)\, .
$$
}

\subsection*{Acknowledgments}
We thank L.~Mosher, P.~Papasoglu, A.~Sale, and A.~Sisto for useful conversations and corrections.
The first author would also like to thank the law firms of Orrick,
Herrington \& Sutcliffe and Disability Rights Advocates, and in
particular Ren\'e Kathawala and Elina Druker for working to stop
the New York City Department of Education from discriminating against children with
disabilities.

\section{General preliminaries}\label{sect:prelim}

We recall some standard definitions and establish our notation.

\me

We use the notation $\nn_R(A)$ for the {\em (open) $R$-neighborhood} of
a subset $A$ in a metric space $(X, \dist )$, i.e. $\nn_R(A)=\{x\in X:
\dist (x, A)<R\}$.  If $A=\{a\}$ then $\nn_R(A)=B(a,R)$
is the open $R$-ball centered at $a$.

We use the notation $\onn_R (A)$ and $\bar{B}(a,R)$ to designate the
corresponding {\em closed neighborhoods} and {\em closed balls}
defined by non-strict inequalities.

We make the convention that $B(a,R)$ and $\bar{B}(a,R)$ are the empty
set for $R<0$ and any $a\in X$. The terms ``neighborhood'' and ``ball'' will always mean an open  neighborhood, respectively, ball.

\begin{notation} Let $a>1$, $b,x,y$ be
positive real numbers. We write $ x\leq_{a,b} y$ if $$ x\le ay+b.$$

We write $x \approx_{a,b} y$ if $ x\leq_{a,b} y$ and
$y\leq_{a,b} x$.
\end{notation}

Consider two constants $L\geq 1$ and $C\geq 0$.

An $(L,C)$--\emph{coarse Lipschitz map} is a map $f:X\to Y$ of a metric space $X$ to a metric space $Y$ such that
$$
\dist (f (x), f(x'))\leq_{L,C} \dist (x,x' ),
\hbox{ for all } x, x' \in X.
$$

An $(L,C)$--\emph{quasi-isometric embedding} is a map $f:X\to Y$ that satisfies
$$
\dist (f (x), f(x'))\approx_{L,C} \dist (x,x' )\, ,
\hbox{ for all } x, x' \in X.
$$

If moreover $Y\subseteq \nn_C (f(X))$ the map $f$ is called a \emph{quasi-isometry}.

An $(L,C)$-{\em quasi-geodesic} is an $(L,C)$--quasi-isometric
embedding $\pgot\colon I\to X$, where $I$ is a connected subset of the
real line.  A \emph{sub-quasi-geodesic} of $\pgot$ is a restriction
$\pgot|_J\, ,$ where $J$ is a connected subset of $I$.

When $I=[a,\infty)$ we call both $\pgot$ and its image $\pgot (I)$ an
$(L,C)$-\emph{quasi-geodesic ray}. When $I=\mathbb{R}$ we call both $\pgot $ and its image
\emph{bi-infinite $(L,C)$-quasi-geodesic}.

We call $(L,0)$-quasi-isometries (quasi-geodesics)
$L$-\emph{bilipschitz maps (paths)}.

When the constants $L,C$ are irrelevant they are often not mentioned.

\bigskip

When considering divergence, it is often useful to consider a
particular type of metric space: the
tree-graded space, as these spaces, especially in their appearance as ultralimits, are
particularly relevant.  Recall the following definition from
\cite{DrutuSapir:TreeGraded}: a complete geodesic metric space $\free$
is {\em tree-graded with respect to a collection} $\pp$ of
closed geodesic subsets (called {\it{pieces}}) when the following two
properties are satisfied:

\begin{enumerate}

\item[($T_1$)] Every two different pieces have at most one common
point.

\item[($T_2$)] Every simple geodesic triangle in $\free$ is contained in one piece.
\end{enumerate}

\begin{lem}[Dru\c{t}u--Sapir  \cite{DrutuSapir:TreeGraded}]\label{tx}
Let ${\mathbb F}$ be a space which is tree-graded with respect to a collection of pieces
$\mathcal{P}$.

\begin{itemize}
\item[(1)] For every point $x\in \free$, the set $T_x$ of topological
arcs originating at $x$ and intersecting any piece in at most one point
is a complete real tree (possibly reduced to a point). Moreover if
$y\in T_x$ then $T_y=T_x$.
\item[(2)] Any topological arc joining two points in a piece is
contained in the same piece. Any topological arc joining two points
in a tree $T_x$ is contained in the same tree $T_x$.
\end{itemize}
\end{lem}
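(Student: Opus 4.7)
The plan is to establish first the geometric backbone underlying all tree-graded arguments: every simple closed curve in $\free$ is contained in a single piece. Granted this ``loop principle,'' both parts of the lemma follow by variants of a single trick of producing a simple loop from two competing arcs and invoking $(T_1)$. To derive the loop principle from $(T_2)$, I would pick three points on the simple loop, join them by geodesics of $\free$ to obtain a simple geodesic triangle (handled directly by $(T_2)$) together with three ``half-moon'' regions bounded by an arc of the loop and a geodesic chord; iterating the same construction on the half-moons and using $(T_1)$ to identify the pieces assigned at successive levels, one concludes that the whole loop sits inside a single piece.

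Next I would prove part (2). Let $y_1,y_2\in P$ for some piece $P$ and let $\alpha$ be a topological arc from $y_1$ to $y_2$; assume for contradiction that $\alpha$ leaves $P$. By compactness one finds a first departure point $a$ and a first subsequent return point $b$ on $\alpha$, both in $P$. The geodesic $[a,b]\subset P$ together with the sub-arc $\alpha|_{[a,b]}$, which lies outside $P$ except at its endpoints, bounds a simple loop contained in some piece $P'$ by the loop principle. Then $\{a,b\}\subset P\cap P'$ with $a\ne b$, violating $(T_1)$, so $\alpha\subset P$. The analogous statement for $T_x$ is proved similarly, by combining a straying sub-arc with a matching sub-arc of an arc in $T_x$ to form a simple loop trapped in a piece.

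To prove part (1), path-connectedness of $T_x$ is immediate from the definition. For uniqueness of topological arcs, if $\alpha_1\ne \alpha_2$ are two such arcs joining the same pair of points inside $T_x$, their union contains a simple loop which by the loop principle lies in some piece $P$; this forces one of the $\alpha_i$ to meet $P$ in a non-trivial sub-arc, contradicting its membership in $T_x$. Unique arc-connectedness together with the induced length metric gives $T_x$ the structure of a real tree, and completeness follows because if $(y_n)\subset T_x$ is Cauchy then the unique arcs from $x$ to $y_n$ converge to an arc from $x$ to $y=\lim y_n$, and meeting a piece in two points is a witnessed configuration that would already appear at some finite stage. For the ``moreover'' clause, if $y\in T_x$ and $z\in T_y$, I would concatenate arcs $\alpha\colon x\to y$ in $T_x$ and $\beta\colon y\to z$ in $T_y$ and reduce to a topological arc from $x$ to $z$; if this met some piece $P$ in distinct points $p$ and $q$, part (2) would put the sub-arc from $p$ to $q$ inside $P$, hence $y\in P$, and then membership of $\alpha$ in $T_x$ and $\beta$ in $T_y$ would force $p=y=q$.

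The main obstacle is the first step: passing from $(T_2)$ for simple geodesic triangles to the loop principle is a nontrivial topological argument, since a simple loop need not be rectifiable a priori and the iterative triangulation must be shown to converge coherently with the piece decomposition. Once the loop principle is in hand, every other step reduces to the single observation that two arcs meeting at two points form a loop trapped inside a piece, combined with $(T_1)$.
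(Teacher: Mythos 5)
The paper states Lemma~\ref{tx} as a citation to Dru\c{t}u--Sapir \cite{DrutuSapir:TreeGraded} and offers no proof of its own, so there is nothing internal to compare against; I will evaluate your sketch on its merits.

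Your organizing idea --- first establish the ``loop principle'' that every simple closed curve lies in a single piece, then extract simple loops from competing arcs and invoke $(T_1)$ --- is the right backbone, and the reductions you carry out \emph{assuming} the loop principle (first-departure/first-return for part (2), two distinct arcs sharing endpoints for uniqueness in part (1), the concatenation-and-excision argument for $T_y=T_x$) are all correct. The difficulty is precisely the step you flag as the main obstacle, and it really is a gap rather than a routine elaboration. Your route to the loop principle --- choose three points on the simple loop, join them by geodesics, handle the inner geodesic triangle by $(T_2)$, iterate on the three half-moons --- breaks down at both ends. Nothing guarantees the three geodesic chords form a \emph{simple} geodesic triangle: they may cross one another or re-enter the loop, and $(T_2)$ gives you nothing about non-simple triangles. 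Even granting simplicity, each half-moon is again an arbitrary (possibly non-rectifiable) simple closed curve, and the iteration has no measure of progress, so it neither terminates nor obviously converges to a statement about the original loop. The Dru\c{t}u--Sapir development avoids this by first proving a \emph{gate} (nearest-point projection) property: for a piece $P$ and a point $x\notin P$ there is a single point of $P$ through which every path from $x$ into $P$ must pass. With that in hand, part (2) follows directly and without any loop principle (both subarcs from a straying point back to the endpoints in $P$ would have to pass through the same gate, contradicting simplicity), and the loop principle is only needed for geodesic polygons, where induction on the number of sides does terminate.

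A secondary soft spot is the completeness of $T_x$. Limits of topological arcs need not be arcs, and the claim that an offending two-point intersection with a piece ``would already appear at some finite stage'' is unsubstantiated as written. The clean route is to observe that in a tree-graded space the points of $T_x$ are exactly those joined to $x$ by a \emph{geodesic} meeting each piece in at most one point; one then works with these geodesics, whose uniform limits along a Cauchy sequence of endpoints are again geodesics of the same type, and completeness follows. As it stands your argument for this clause needs to be replaced, not merely fleshed out.
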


\begin{lem}[Dru\c{t}u--Sapir \cite{DrutuSapir:TreeGraded},
    Lemma 2.31]\label{cutting}
Let $X$ be a complete geodesic metric space containing at least
two points and let $\calc$ be a non-empty set of cut-points in
$X$. There exists a uniquely defined (maximal in an appropriate
sense) collection $\calp$ of subsets of $X$ such that
\begin{itemize}
\item $X$ is tree-graded with respect to $\pp$;
\item any piece in $\pp$ is either a singleton or a set
with no cut-point in~$\calc$.
\end{itemize}
Moreover the intersection of any two distinct pieces from $\calp$
is either empty or a point from $\calc$.
\end{lem}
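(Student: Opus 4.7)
The plan is to construct $\calp$ explicitly from an equivalence relation on $X$ encoding the cut-point structure. For $x,y \in X$ declare $x \sim y$ if $x = y$ or if no point of $\calc \setminus \{x,y\}$ separates $x$ from $y$ in $X$. Reflexivity and symmetry are immediate; the key verification is transitivity. Assume $x \sim y$ and $y \sim z$ but some $c \in \calc \setminus \{x,z\}$ disconnects $X$ into components $U \ni x$ and $V \ni z$. If $y \neq c$, then $y$ lies in exactly one of $U, V$, and in either case one of the two hypotheses is contradicted; if $y = c$, then one inspects $y \sim z$ directly to rule the separation out. Define $\calp$ to consist of the closures of all $\sim$-equivalence classes of size at least two, together with the singletons $\{c\}$ for each $c \in \calc$ whose $\sim$-class is $\{c\}$.

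Next, one verifies the tree-graded axioms. For $(T_2)$, given a simple geodesic triangle $\Delta$ with vertices $p, q, r$, any $c \in \calc$ lying in the interior of a side, say $[p,q]$, is disjoint from $[p,r] \cup [r,q]$ by simplicity, so this latter path connects $p$ to $q$ in $X \setminus \{c\}$, contradicting $c$ being a cut-point. Hence all points of $\calc$ on $\Delta$ are vertices, and a parallel argument shows that the three vertices are pairwise $\sim$-equivalent and that every point on an edge is $\sim$-equivalent to them. After taking closures, $\Delta$ therefore lies in a single piece. For $(T_1)$, if two distinct pieces $P \neq P'$ shared two points $a \neq b$, then any cut-point $c \in \calc$ separating $a$ from $b$ would lie on every $a$-$b$ geodesic, hence inside $P$ and $P'$; one exhibits a simple triangle through $a, b$ contained in $P \cup P'$ to derive (via $(T_2)$) that $P = P'$, contradicting distinctness. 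A shared point of two distinct pieces must moreover be a cut-point in $\calc$, since otherwise one could slightly enlarge the pieces into each other, violating maximality.

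The main obstacle is verifying that the closure of an equivalence class behaves well: one needs that closures are geodesic subspaces, that the ``no cut-point of $\calc$'' property passes to the closure (which requires approximating a would-be separated pair by $\sim$-equivalent pairs and using continuity of geodesics in the complete space $X$), and that no geodesic between points of a piece leaks out into neighbouring pieces. This is where completeness of $X$ and the precise topology of cut-point removal enter, and it is also what allows one to conclude that pieces in $\calp$ of size at least two contain no cut-point of $\calc$.

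Finally, uniqueness and maximality: given any other family $\calp'$ satisfying both bullets, each non-singleton $Q \in \calp'$ contains no cut-point of $\calc$, so any two of its points are $\sim$-equivalent and $Q$ is contained in a piece of $\calp$; conversely a piece of $\calp$ has no nontrivial cut-structure from $\calc$ and decomposes into pieces of $\calp'$, which by $(T_1)$ forces it to coincide with a single piece of $\calp'$. Thus $\calp = \calp'$, and the last assertion on intersections is immediate from $(T_1)$ together with the characterization of common points as cut-points in $\calc$.
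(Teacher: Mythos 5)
This lemma is cited from Dru\c{t}u--Sapir and the paper does not reprove it, so there is no ``paper's own proof'' to compare against; I will evaluate the proposal on its own.

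The first step already contains a genuine error: the relation you define is \emph{not} transitive, and the case you dismiss (``if $y=c$, one inspects $y\sim z$ directly to rule the separation out'') is precisely where it breaks. Take $X=\mathbb{R}$, $\calc=\{0\}$, $x=-1$, $y=0$, $z=1$. Then $x\sim y$ holds vacuously because $\calc\setminus\{-1,0\}=\emptyset$, and $y\sim z$ holds vacuously for the same reason, yet $x\not\sim z$ since $0\in\calc\setminus\{-1,1\}$ separates them. Nothing ``rules out'' the separation: the separating cut-point \emph{is} $y$, so neither hypothesis is violated. Consequently $\sim$ does not partition $X$ and the subsequent construction of $\calp$ from ``$\sim$-classes'' is ill-founded. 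The standard repair is to define the relation only on $X\setminus\calc$ (where the intermediate point $y$ is automatically different from any separating cut-point $c\in\calc$, so your transitivity argument does go through), and then produce pieces by adjoining to each class the cut-points in its closure; in the $\mathbb{R}$ example this correctly yields $(-\infty,0]$ and $[0,\infty)$, two pieces sharing the point $0\in\calc$, which is not a set you can get as a single equivalence class on all of $X$.

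Beyond that, the paragraph labelled ``the main obstacle'' lists what must be checked (closures remain geodesic, the no-cut-point property passes to the closure, geodesics do not leak into neighbouring pieces) but does not actually check any of it; these are the substantive points and cannot be deferred. The $(T_1)$ step also asserts without justification that two pieces sharing $a\ne b$ admit a simple geodesic triangle through $a,b$ lying in $P\cup P'$, which is not automatic. Finally, in the uniqueness argument the claim that a piece of $\calp$ ``decomposes into pieces of $\calp'$, which by $(T_1)$ forces it to coincide with a single piece of $\calp'$'' needs an argument: $(T_1)$ controls pairwise intersections, not the impossibility of covering a connected no-cut-point set by several pieces glued at single points. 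So while the overall strategy (equivalence classes from the cut-point structure, closures, verify $(T_1)/(T_2)$, maximality) is the right one, the proof as written has a false lemma at its base and substitutes descriptions for proofs at the hardest steps.
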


\section{Divergence.}

Throughout this section $(X, \dist)$, or just $X$,
will denote a geodesic metric space.

\subsection{Equivalent definitions for divergence}

We recall the various definitions of the divergence and the fact
that under some mild conditions all these functions are equivalent.
The main reference for the first part of this section is \cite[$\S
3.1$]{DrutuMozesSapir}.

Consider two constants $0<\delta<1$ and $\gamma \geq 0$.

For an arbitrary triple of points $a,b,c\in X$ with $\dist(c,\{a,b\})=r>0$, define $\dv_{\gamma }(a,b,c;\delta)$ as the
infimum of the lengths
of paths
connecting $a, b$ and avoiding the ball $\Bl(c,\delta r -\gamma )$.

If no such path exists, define $\dv_{\gamma }(a,b,c;\delta)=\infty$.

\begin{defn}\label{Div}
The {\em divergence function} $\Dv^X_{\gamma }(n ,\delta)$ of the space $X$ is defined as the supremum of all
numbers $\dv_{\gamma }(a,b,c;\delta)$ with $\dist(a,b)\le n$. When there is no danger of confusion we drop the superscript
$X$.
\end{defn}

A particular type of divergence will be useful to obtain
lower bounds for the function $\Dv$ defined as above. More precisely,
let $\q$ be a bi-infinite quasi-geodesic in the space $X$, seen as a
map $\q \colon \R \to X \, $ satisfying the required two inequalities. We define \emph{the divergence of this quasi-geodesic} as
the function
$$
\Dv_\gamma^\q \colon(0, +\infty ) \to (0, +\infty), \; \; \Dv_\gamma^\q (r) = \dv_{\gamma }(\q (r)\, ,\, \q (-r)\, ,\, \q
(0) ;\delta)\, .
$$

Clearly for every bi-infinite quasi-geodesic $\q $ in a space $X$, $\Dv_\gamma^\q \preceq \Dv^X_\gamma \, $.
\me

In what follows we call a metric space $X$ \emph{proper} if all its
closed balls are compact.  We call it \emph{periodic}
if for fixed constants $L\geq 1$ and $C\geq 0$ the orbit of some ball
under the group of $(L,C)$--quasi-isometries covers $X$.

\me

A metric space is said \emph{to satisfy the hypothesis} (Hyp$_{\kappa ,L}$) for some $\kappa \geq 0$  and $L\geq 1$ if it
is one-ended,
proper, periodic, and every point is at distance less than $\kappa$ from a
bi-infinite $L$--biLipschitz path.

\begin{exa}
A Cayley graph of a finitely generated one-ended group satisfies the hypothesis $\left( \mathrm{Hyp}_{\frac12 ,
1}\right)$.
\end{exa}

\begin{lem}[Lemma 3.4 in \cite{DrutuMozesSapir}]\label{rem6}
Assume that $X$ satisfies \emph{(Hyp$_{\kappa ,L}$)} for some $\kappa \geq 0$ and $L\geq 1$. Then for $\delta_0=
\frac{1}{1+L^2}$ and every
$\gamma \geq 4 \kappa $ the function $\Dv_{\gamma }(n ,\delta_0)$ takes only finite values.
\end{lem}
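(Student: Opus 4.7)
The plan is to show, for any triple $(a,b,c)$ with $\dist(a,b) \leq n$ and $r := \dist(c, \{a,b\}) > 0$, that one can construct a rectifiable path from $a$ to $b$ disjoint from $B(c, R)$, where $R := \delta_0 r - \gamma$, with length bounded uniformly in $n$. The easiest case occurs when $r$ is large compared to $n$: every point on a geodesic $[a,b]$ of length $\leq n$ is at distance $\geq r - n/2$ from $c$ by the triangle inequality, so if $r \geq r_0 := (n/2 - \gamma)/(1 - \delta_0)$ the geodesic already avoids $B(c, R)$ and yields a path of length $\leq n$. (The case $R \leq 0$ is subsumed: $B(c, R)$ is empty by convention.) It thus remains to consider triples with $r \leq r_0$, where the configuration lies in a region of diameter $O(n)$.

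For such a triple, choose bi-infinite $L$-biLipschitz paths $\alpha, \beta$ passing within $\kappa$ of $a$ and $b$, with base points $p_a = \alpha(0)$ and $p_b = \beta(0)$. The assumption $\gamma \geq 4\kappa$ together with $\delta_0 < 1$ guarantees that $p_a, p_b \notin B(c,R)$ and that the short geodesic arcs $[a, p_a], [b, p_b]$ stay outside $B(c,R)$: indeed a point at distance $\leq \kappa$ from $a$ has distance $\geq r - \kappa$ from $c$, and $r - \kappa \geq \delta_0 r - \gamma$ rearranges to $r(1 - \delta_0) \geq \kappa - \gamma$, which holds because the right-hand side is nonpositive. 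Next travel along $\alpha$: since $\alpha$ is a biLipschitz embedding into the proper space $X$, the preimage $\alpha\iv(\bar B(c,R))$ is a compact subset of $\R$ not containing $0$. Going sufficiently far along $\alpha$ in an appropriate direction (with short local excursions around any components of this preimage that we must cross), one reaches a point $a'$ on $\alpha$ with $\dist(a', c)$ safely larger than $R$; the choice $\delta_0 = 1/(1+L^2)$ is the threshold at which the $L$-biLipschitz distortion permits this extraction with controlled length. Symmetrically produce $b'$ along $\beta$.

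Finally, connect $a'$ and $b'$ by a rectifiable path in $X \setminus B(c,R)$. Properness and one-endedness provide a compact set $K \supset \bar B(c,R)$ with $X \setminus K$ connected, and since $X$ is locally path-connected (being geodesic), $X \setminus K$ is path-connected; a standard compactness argument then replaces any continuous connecting path by a finite chain of geodesic arcs of finite total length in $X \setminus K \subset X \setminus B(c,R)$. The main obstacle is ensuring this total length is \emph{uniformly} bounded over the reduced family of triples. Periodicity resolves this: using the cocompact action of the QI-group, one may translate $(a,b,c)$ so that $c$ lies in a fixed bounded neighborhood of a basepoint; since $r \leq r_0 = O(n)$, the whole triple then lies in a uniformly bounded region, and a compactness argument on this family yields the required uniform bound on the connecting length, completing the proof that $\Dv_{\gamma}(n, \delta_0) < \infty$ for every $n$.
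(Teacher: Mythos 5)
This lemma is cited from \cite{DrutuMozesSapir} rather than proved in the present paper, so there is no internal proof to compare against; your proposal has to stand on its own. Your overall strategy is the right one: dispose of the case $r\gg n$ with the triangle inequality, then for $r=O(n)$ escape the ball $B(c,R)$ along the biLipschitz lines near $a$ and $b$, and finally join the two escape points using one-endedness, properness, and periodicity. However, there is a genuine gap at the step that carries all the quantitative content — getting from $p_a$ to $a'$ outside $B(c,R)$. You propose to travel along $\alpha$ ``with short local excursions around any components of the preimage $\alpha^{-1}(\bar B(c,R))$ that we must cross.'' An excursion around the forbidden ball is precisely the quantity whose finiteness you are trying to establish; asserting that such detours exist and are ``short'' is circular, and the claim that $\delta_0=1/(1+L^2)$ is ``the threshold at which the $L$-biLipschitz distortion permits this extraction with controlled length'' is stated but never derived. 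As written, nothing prevents $\alpha$ from crossing $B(c,R)$ many times, and no bound on any detour is obtained.

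The correct reading of $\delta_0=1/(1+L^2)$ is that \emph{at least one of the two rays of $\alpha$ from $p_a=\alpha(0)$ misses $B(c,R)$ entirely}, so no excursions are needed. Indeed, suppose both rays hit it: $\alpha(s_+),\alpha(s_-)\in B(c,R)$ with $s_+>0>s_-$. Then $\dist(\alpha(s_+),\alpha(s_-))<2R$, while $\dist(\alpha(s_+),\alpha(s_-))\ge (s_++|s_-|)/L$; moreover $\dist(\alpha(0),\alpha(s_\pm))>(r-\kappa)-R$ via the triangle inequality through $c$ (using $\dist(p_a,c)\ge r-\kappa$), so $|s_\pm|>(r-\kappa-R)/L$. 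Combining gives $2R>2(r-\kappa-R)/L^2$, i.e.\ $R(1+L^2)>r-\kappa$; substituting $R=r/(1+L^2)-\gamma$ yields $\kappa>\gamma(1+L^2)\ge 8\kappa$ (since $\gamma\ge 4\kappa$, $L\ge 1$), a contradiction. This is the computation that actually uses $\delta_0$ and the hypothesis $\gamma\ge 4\kappa$, and it should replace the ``excursion'' step: ride the good ray until $\dist(\alpha(t),c)\ge\rho$, at cost $O(n+\rho)$. Your final joining step has the right ingredients (one-endedness, properness, periodicity) but is also too brisk: you must take $\rho$ large enough that $a',b'$ lie in the unique unbounded component of $X\setminus K$ for a compact $K\supset \bar B(c,R)$, translate $c$ near a basepoint by periodicity, and then argue by compactness that the intrinsic distance in that unbounded component is bounded on the relevant compact set; that is a matter of care rather than of a missing idea, whereas the one-ray escape computation above is the missing idea.
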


A new divergence function, more restrictive as to the set of triples considered, is the following.

\begin{defn}\label{div}
Let $\lambda\ge 2$. The {\em small divergence function} $\dv_{\gamma }(n;\lambda,\delta)$ is the supremum of all numbers
$\dv_{\gamma
}(a,b,c;\delta)$ with  $0\le \dist(a,b)\le n$ and
\begin{equation} \label{lambda}
\lambda\dist(c,\{a,b\})\ge \dist(a,b).
\end{equation}
\end{defn}

\me

We define two more versions of divergence functions, with a further restriction on the choice of $c$. For
every pair of points $a,b\in X$, we choose and fix a geodesic
$[a,b]$ joining them such that if $x,y$ are points on a geodesic $[a,b]$ chosen to join $a,b$ the subgeodesic $[x,y]
\subseteq [a,b]$ is
chosen for $x,y$.

We say that a point $c$ is {\em between} $a$ and $b$ if $c$ is
on the fixed geodesic segment $[a,b]$.

We define $\Dv'_{\gamma }(n;\delta)$ and $\dv'_{\gamma }(n;\lambda,\delta)$ same as $\Dv_{\gamma }$ and $\dv_{\gamma }$ before,
but restricting
$c$
to the set of points between $a$ and $b$. Clearly  $\Dv_{\gamma }'(n;\delta)\le \Dv_{\gamma }(n;\delta)$ and
$\dv_{\gamma }'(n;\lambda,\delta)\le \dv_{\gamma }(n;\lambda,\delta)$ for every $\lambda,\delta$.

All these versions of divergence are now shown to be equivalent under
appropriate conditions.

\begin{prop}[Corollary 3.12 in \cite{DrutuMozesSapir}]\label{cdiv}

Let $X$ be a space satisfying the hypothesis \emph{(Hyp$_{\kappa
,L}$)} for some constants $\kappa \geq 0$ and $L\geq 1$, and let
$\delta_0=\frac{1}{1+L^2}$ and $\gamma_0 = 4\kappa $.

\me

\begin{enumerate}
    \item[(i)] Up to the equivalence relation $\asymp\, $, the functions
    $\dv_{\gamma }'(n;\lambda,\delta)$ and $\Dv_{\gamma }'(n;\delta)$
    with $\delta \leq \delta_0$ and $\gamma \geq \gamma_0\, $ are
    independent of the choice of geodesics $[a,b]$ for every pair of
    points $a,b$.

\me

\item[(ii)] For every $\delta \leq \delta_0$, $\gamma \ge \gamma_0$, and
$\lambda \geq 2$
\begin{equation*}\label{eq:equiv}
\Dv_\gamma (n; \delta ) \asymp \Dv_\gamma' (n; \delta ) \asymp
\dv_\gamma (n; \lambda ,\delta )\asymp \dv_\gamma' (n; \lambda ,\delta
)\, .
\end{equation*}

Moreover all the functions in this equation are independent of
$\delta \leq \delta_0$ and $\gamma \ge \gamma_0$ (up to the
equivalence relation $\asymp$).

\me

    \item[(iii)] The function $\Dv_{\gamma }(n;\delta)$ is equivalent to
    $\dv_{\gamma }'(n;2,\delta)$ as a function in $n$.  Thus in order
    to estimate $\Dv_{\gamma }(n,\delta)$ for $\delta\leq \delta_0$ it
    is enough to consider points $a,b,c$ where $c$ is the midpoint of
    a (fixed) geodesic segment connecting $a$ and $b$.
\end{enumerate}
\end{prop}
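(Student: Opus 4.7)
The plan is to derive all three parts from the single master inequality
\[
\Dv_\gamma(n; \delta) \preceq \dv_\gamma'(n; 2, \delta), \qquad \delta \leq \delta_0 = \tfrac{1}{1+L^2},\ \gamma \geq \gamma_0 = 4\kappa.
\]
Part (iii) is the content of this inequality combined with the trivial $\dv_\gamma'(n; 2, \delta) \leq \Dv_\gamma(n; \delta)$. For (ii) the trivial orderings $\dv_\gamma' \leq \dv_\gamma \leq \Dv_\gamma$ and $\Dv_\gamma' \leq \Dv_\gamma$, together with the master inequality, sandwich every function listed there between the two sides and yield all asserted $\asymp$-equivalences; the independence under variation of $\delta \leq \delta_0$ and $\gamma \geq \gamma_0$ follows by applying the master inequality at the extremal parameter values (monotonicity in $\delta$ accounts for the other direction). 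Part (i) is then immediate from (ii): for any geodesic choice, both $\Dv_\gamma'$ and $\dv_\gamma'$ are $\asymp$ to $\Dv_\gamma$, which is manifestly independent of any choice of geodesic.

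To prove the master inequality, I fix a triple $a, b, c$ with $\dist(a, b) \leq n$ and $r = \dist(c, \{a, b\}) > 0$. Using hypothesis (Hyp$_{\kappa, L}$) I select a bi-infinite $L$-biLipschitz path $\beta \colon \R \to X$ with $\beta(0) =: c'$ a nearest point to $c$, so that $\dist(c, c') < \kappa$. Setting $a' = \beta(-t)$ and $b' = \beta(t)$, with $t$ chosen so that $Lt$ slightly exceeds $n + r$, the point $c'$ becomes an approximate midpoint of a geodesic $[a', b']$ with $\dist(a', b') = O(n)$, while $\dist(a, a')$ and $\dist(b, b')$ are of order $O(n)$. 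Any path from $a'$ to $b'$ realizing (up to a constant) $\dv_\gamma'(a', b', c'; 2, \delta)$ may be concatenated on either end with the appropriate tail of $\beta$ to yield a path from $a$ to $b$ of length longer by $O(n)$; if this concatenation avoids $\Bl(c, \delta r - \gamma)$, we obtain $\dv_\gamma(a, b, c; \delta) \leq \dv_\gamma'(a', b', c'; 2, \delta) + O(n)$, and passing to suprema gives the master inequality.

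The main obstacle is verifying that the appended tails of $\beta$ do not themselves enter $\Bl(c, \delta r - \gamma)$; it is precisely this check that forces the numerical thresholds $\delta_0$ and $\gamma_0$. The two constraints in play are the biLipschitz bounds $|u|/L \leq \dist(\beta(u), c') \leq L|u|$, which control how quickly $\beta$ enters or leaves neighborhoods of $c'$, and the $\kappa$-shift $\dist(c, c') < \kappa$, which transports the ball around $c'$ to a comparable ball around $c$. The portion of $\beta$ permitted to lie inside the forbidden ball around $c'$ has parameter range $|u| \leq L^2 \delta t$, while the outer tails $|u| \in [L^2 \delta t, t]$ must satisfy $|u|/L - \kappa \geq \delta r - \gamma$. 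Combining these with $r \leq L t$ reduces everything to an elementary inequality whose compatibility is achieved precisely when $\delta \leq 1/(1+L^2)$ and $\gamma \geq 4\kappa$, explaining the values of $\delta_0$ and $\gamma_0$. Once these thresholds hold the construction produces the required path with constants depending only on $\kappa$ and $L$, the master inequality follows, and parts (i)--(iii) are delivered as described.
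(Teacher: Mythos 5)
The paper does not contain its own proof of this proposition: it is quoted verbatim as Corollary 3.12 of \cite{DrutuMozesSapir} and cited without argument, so your proposal is an attempt to reconstruct a proof rather than something to be compared against a proof given here. Your overall strategy --- reduce everything to a single ``master inequality'' $\Dv_\gamma(n;\delta)\preceq \dv_\gamma'(n;2,\delta)$ and exploit the trivial orderings $\dv'_\gamma\le \dv_\gamma\le \Dv_\gamma$, $\Dv'_\gamma\le\Dv_\gamma$ --- is a sensible architecture and does give parts (i) and (iii) once the master inequality is in hand. However, the two load-bearing steps are not carried out, and as written they contain genuine gaps.

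First, the triple $(a',b',c')$ you construct does not lie in the domain of $\dv'_\gamma$. You set $c'=\beta(0)$, $a'=\beta(-t)$, $b'=\beta(t)$ and then invoke $\dv'_\gamma(a',b',c';2,\delta)$, but $\dv'_\gamma$ only allows triples where $c'$ lies on the \emph{fixed geodesic} $[a',b']$. A biLipschitz path $\beta$ is not a geodesic, and in general $\beta(0)$ can be at distance of order $t$ from the geodesic $[\beta(-t),\beta(t)]$ (think of a biLipschitz curve bulging out like an arc of a circle). Calling $c'$ an ``approximate midpoint'' glosses over exactly the constraint that makes $\dv'_\gamma$ a priori smaller than $\dv_\gamma$. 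This has to be repaired, for instance by first projecting $c$ to the geodesic $[a,b]$ and then extending coarsely, but that is a different construction than the one you describe.

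Second, the concatenation step does not produce a path from $a$ to $b$. You write that a path from $a'$ to $b'$ ``may be concatenated on either end with the appropriate tail of $\beta$ to yield a path from $a$ to $b$,'' but $a$ and $b$ do not lie on $\beta$, so the tails of $\beta$ do not reach them. Even if one inserts geodesics from $a$ to $a'$ and from $b'$ to $b$, nothing in the hypotheses guarantees that those geodesics avoid $\Bl(c,\delta r-\gamma)$: both endpoints lying outside a ball does not prevent a geodesic between them from passing through it, and $a'$ may well sit on the ``opposite side'' of $c$ from $a$. This is the crux of the estimate and the place where the numerical thresholds $\delta_0=1/(1+L^2)$ and $\gamma_0=4\kappa$ must be earned; your final paragraph gestures at a computation but does not actually verify that these constants suffice.

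Third, the claim that independence from $\delta\le\delta_0$ and $\gamma\ge\gamma_0$ ``follows by applying the master inequality at the extremal parameter values'' is not correct as stated. The master inequality relates two divergence functions at the \emph{same} $(\delta,\gamma)$; it says nothing about comparing $\Dv_{\gamma_1}(n;\delta_1)$ with $\Dv_{\gamma_2}(n;\delta_2)$ for different admissible parameters. Monotonicity gives one direction (shrinking the ball can only decrease the divergence), but the reverse comparison requires a separate argument of the same flavour as the master inequality (showing a path that avoids a \emph{small} ball can be upgraded to one avoiding a \emph{larger} ball at linear cost), and you have not supplied it.
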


Proposition \ref{cdiv} implies that the $\asymp$--equivalence
class of the divergence function(s) is a quasi-isometry invariant in the class of metric spaces satisfying the hypothesis \emph{(Hyp$_{\kappa
,L}$)} for some constants $\kappa \geq 0$ and $L\geq 1$.

\me

The equivalent notions of divergence introduced previously are closely
related to the divergence as defined by S. Gersten in
\cite{Gersten:divergence} and \cite{Gersten:divergence3}.  We refer to
\cite{DrutuMozesSapir} for a detailed discussion.

There exists a close connection between the
linearity of divergence and the existence of global cut-points in
asymptotic cones;  see
\cite{Behrstock:asymptotic} for an early example and
\cite{DrutuMozesSapir} for a general theory.

\begin{defn}\label{ums}
A metric space $B$ is \emph{unconstricted} if the following properties hold:
\begin{itemize}
\item[(1)] for some constants $c, \lambda, \kappa$, every point in $B$
is at distance at most $c$ from a bi-infinite $(\lambda ,
\kappa)$--quasi-geodesic in $B$; \item[(2)] there exists an
ultrafilter $\omega$ and a sequence $d$ such that for every sequence
of observation points $b$, $\con (B,b,d)$ does not have cut-points.
\end{itemize}

If (2) is replaced by the condition that every asymptotic cone is
without cut-points then the space $B$ is called \emph{wide}.
\end{defn}

\begin{prop} [Proposition 1.1 in \cite{DrutuMozesSapir}]\label{lm02}
 Let $X$ be a geodesic metric space.
\begin{itemize}
\item[(i)] If there exists $\delta \in (0,1)$ and $\gamma \geq 0$ such that the function
$\Dv_{\gamma }(n;\delta)$ is bounded by a linear function then every
asymptotic cone of $X$ is without cut-points.

    \me

\item[(ii)] If $X$ is wide then for every $0<\delta
<\frac{1}{54}$ and every $\gamma \ge 0$, the function $\Dv_{\gamma
}(n;\delta)$ is bounded by a linear function.

    \me

\item[(iii)] Let $\fg \co \R \to X $ be a periodic geodesic. If $\fg$ has
superlinear divergence then in any asymptotic cone, $\cone(X)$,
for which the limit of $\fg$ is nonempty there exists a collection
of proper subsets of $\cone(X)$ with respect to which it is
tree-graded and the limit of
$\fg$ is a transversal geodesic.
\end{itemize}
\end{prop}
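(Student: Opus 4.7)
The plan is to prove the three parts as separate translations between the divergence function in $X$ and cut-point phenomena in asymptotic cones of $X$. For part (i), I would argue contrapositively. Assume $\Dv_\gamma(n;\delta) \leq Ln + K$, and suppose some asymptotic cone $\cone(X) = \ulim(X, x_n, d_n)$ contains a cut-point $c$ separating $a \neq b$ in $\cone(X)$. Let $(a_n),(b_n),(c_n)$ be representative sequences; the distances $\dist(a_n,b_n)$ and $r_n := \dist(c_n,\{a_n,b_n\})$ are $\omega$-almost surely comparable to $d_n$. By hypothesis there is a path $p_n$ from $a_n$ to $b_n$ of length at most $L\dist(a_n,b_n)+K$ avoiding $\Bl(c_n,\delta r_n - \gamma)$. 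Rescaling by $1/d_n$ and passing to the ultralimit of the $p_n$ produces a rectifiable path in $\cone(X)$ from $a$ to $b$ avoiding a positive-radius neighborhood of $c$, contradicting the cut-point hypothesis.

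For part (ii) I again proceed contrapositively: assume $\Dv_\gamma(n;\delta)$ is superlinear for some $\delta < \frac{1}{54}$. By Proposition \ref{cdiv}(iii) I may choose triples $(a_n,b_n,c_n)$ with $c_n$ the midpoint of a geodesic $[a_n,b_n]$ and $\dv_\gamma(a_n,b_n,c_n;\delta)/\dist(a_n,b_n) \to \infty$. Setting $d_n = \dist(a_n,b_n)$ and forming $\cone(X,c_n,d_n)$, the limits $a,b$ of $(a_n),(b_n)$ are distinct with $c = \ulim c_n$ between them. If $c$ failed to be a cut-point separating $a$ from $b$, a path joining them in $\cone(X)$ while avoiding a ball of definite radius around $c$ would exist; approximating this path by piecewise geodesic paths in $X$, with the threshold $\frac{1}{54}$ chosen so that the $\delta$-ball is small enough for the approximation to still miss $\Bl(c_n, \delta d_n/2 - \gamma)$, yields $\omega$-almost surely a path from $a_n$ to $b_n$ outside that ball of length $O(d_n)$, contradicting superlinearity. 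Hence $\cone(X,c_n,d_n)$ has a cut-point, so $X$ is not wide.

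Part (iii) proceeds in the same spirit: using superlinearity of the divergence of $\fg$, I would choose triples $(\fg(r_n),\fg(-r_n),\fg(0))$ and basepoints so that the argument of (ii) yields a cut-point $c$ on the limit $\fg_\omega := \ulim \fg$ separating two of its points. Periodicity of $\fg$ provides an isometric translation action on $\cone(X)$ preserving $\fg_\omega$; transporting $c$ along this action shows that every point of $\fg_\omega$ is a cut-point of the same kind. Letting $\calc$ denote this collection and applying Lemma \ref{cutting} produces a tree-graded decomposition $\pp$ of $\cone(X)$ in which pieces meet pairwise in at most a point of $\calc$, and by the defining property of $\calc$ no arc of $\fg_\omega$ of positive length lies in a single piece, so $\fg_\omega$ is a transversal geodesic.

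The main obstacle lies in part (iii), specifically in upgrading the existence of a single cut-point on $\fg_\omega$ to the assertion that every point of $\fg_\omega$ is a cut-point. Superlinear divergence of $\fg$ and the argument of part (ii) deliver one cut-point; the delicate step is to invoke periodicity of $\fg$ to translate this cut-point along $\fg_\omega$ inside $\cone(X)$, which requires choosing the scaling sequence $d_n$ commensurately with the period of $\fg$ and verifying that the translation action descends to an isometry of the cone. Once this homogeneity is in hand, Lemma \ref{cutting} packages the tree-graded structure and the transversality of $\fg_\omega$ automatically.
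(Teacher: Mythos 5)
Your overall framework --- passing to asymptotic cones, exploiting the cut-point dichotomy, and invoking Lemma~\ref{cutting} for the tree-graded structure --- is the right one, and your argument for part (i) is essentially complete: the linear divergence bound hands you uniformly short avoiding paths $p_n$ in $X$, and the ultralimit of their arc-length reparametrizations is a $1$-Lipschitz path in $\cone(X)$ from $a$ to $b$ staying at distance at least $\delta\cdot\ulim(r_n/d_n)>0$ from $c$.

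The gap is in part (ii), in the sentence ``If $c$ failed to be a cut-point separating $a$ from $b$, a path joining them in $\cone(X)$ while avoiding a ball of definite radius around $c$ would exist.'' From $c$ not being a cut-point you only get, via local path-connectedness of $\cone(X)\setminus\{c\}$, \emph{some} path from $a$ to $b$ missing $c$; compactness then gives a radius $\epsilon>0$ it avoids, but $\epsilon$ depends on the chosen cone and path and has no a priori lower bound. Your discretization step needs the cone path to stay outside $B(c,\epsilon_0)$ for an $\epsilon_0$ commensurate with $\delta\cdot\ulim(r_n/d_n)$, and nothing in the hypothesis supplies this. The constant $1/54$ is not an approximation tolerance to be ``chosen so that the $\delta$-ball is small enough''; in the source it is a combinatorial constant that emerges from an iterated subdivision of $[a_n,b_n]$, in which the no-cut-point hypothesis (together with wideness condition~(1), the quasi-geodesic density) is applied across several nested scales and the losses compound multiplicatively. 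Absent that iteration, the step from ``not a cut-point'' to a quantitatively avoided ball simply does not go through. A secondary issue: Proposition~\ref{cdiv}(iii) requires (Hyp$_{\kappa,L}$), hence properness, periodicity and one-endedness, none of which are hypotheses of part (ii); you can dodge this by noting directly that superlinearity of $\Dv_\gamma$ already forces $r_n\asymp\dist(a_n,b_n)$ (otherwise the ball misses $[a_n,b_n]$ or $c$ degenerates onto $\{a,b\}$ in the cone), but the $\epsilon$-control gap remains.

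Part (iii) inherits the same gap, since producing the first cut-point on $\fg_\omega$ is exactly the hard direction of (ii). There is also a scope issue: the conclusion is for \emph{every} cone in which $\fg_\omega$ is nonempty, while your construction only yields cut-points in the particular cones built from the sequence witnessing superlinearity; you would need to show the cut-point exists in an arbitrary $\cone(X,x_n,d_n)$ with $\ulim\dist(x_n,\fg)/d_n<\infty$. Your mechanism for spreading a single cut-point along $\fg_\omega$ --- applying $g^{k_n}$ with $k_n\sim sd_n/T$ ($T$ the period) and noting these induce isometries of the cone translating $\fg_\omega$ by $s$ --- is correct and is indeed the right tool, but it engages only after the first cut-point is obtained, which is where the gap sits.
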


\begin{rmk}
\begin{enumerate}
  \item In \cite[Proposition 1.1]{DrutuMozesSapir}, ``wide'' means a
  geodesic metric space satisfying condition (2) only.  For this
  reason in statement (ii) of Proposition 1.1 in \cite{DrutuMozesSapir} it is assumed that $X$ is periodic.
  However that condition is only used to ensure that condition (1) in
  our definition of wideness is satisfied.

      \me

  \item In Proposition \ref{lm02}, (ii), the hypothesis that $X$ is
  wide cannot be replaced by the hypothesis that $X$ is unconstricted.
  Indeed in \cite{OOS} can be found examples of unconstricted groups
  with super-linear divergence.
\end{enumerate}
\end{rmk}

%\endcomment

\subsection{Morse quasi-geodesics and divergence}\label{subsec:morsedefs}
Examples of groups with linear divergence include groups satisfying a law, groups with a central element of infinite
order, uniform lattices in higher rank products of symmetric spaces
and Euclidean buildings and some non-uniform lattices \cite{DrutuMozesSapir}. Conjecturally, all non-uniform
lattices in higher rank have linear divergence.

As shown by Proposition \ref{lm02}, super-linear divergence is equivalent to the existence of cut-points in at least one asymptotic cone. Nothing more consistent can be said on divergence in this very general setting. On the other hand, a stronger property than existence of cut-points allows in many situations to find better estimates on divergence. This property is the existence of Morse quasi-geodesics.  A bi-infinite quasi-geodesic $\q$ in $X$ is called
{\em Morse} if every $(L,C)$--quasi-geodesic with endpoints on $\q$ is
at bounded distance from $\q$ (the bound depends only on $L,C$).  In a
finitely generated group $G$ an element is called \emph{Morse} if it has infinite order and the
cyclic subgroup generated by it is a Morse quasi-geodesic.

Several important classes of groups contain Morse elements. Behrstock proved in \cite{Behrstock:asymptotic} that every
pseudo-Anosov element in a mapping class group is Morse.  Yael
Algom-Kfir proved the same thing for fully irreducible elements of the
outer automorphism group $Out(F_n)$ in \cite{Algom-Kfir}.
In a relatively hyperbolic group every non-parabolic element is
Morse (\cite{DrutuSapir:TreeGraded}, \cite{Osin:RelHyp}).

In \cite{DrutuMozesSapir} it is proved that in a finitely generated
group acting acylindrically on a simplicial tree or on a uniformly
locally finite hyperbolic graph, any loxodromic element is Morse.  An
action on a graph is called $l$-{\em acylindrical} for some $l>0$ if
stabilizers of pairs of points at distance $\ge l$ are finite of
uniformly bounded sizes.  At times the constant $l$ need not be
mentioned.  Note that a group acting by isometries on a simplicial
tree with unbounded orbits always contains loxodromic elements
\cite{Bowditch:tightgeod}.

Existence of Morse quasi-geodesics implies existence of cut-points in
all asymptotic cones.  The converse is only known to be true for
universal covers of non-positively curved compact non-flat de Rham
irreducible manifolds due to the following two results combined with Proposition \ref{lm02}, (iii).

\begin{thm}[\cite{Ballmann:rank}, \cite{Ballmann:book},
\cite{BurnsSpatzier}]
 Let M be a non-positively curved de Rham irreducible manifold with a
 group of isometries acting co-compactly.  Then either M is a higher
 rank symmetric space or M contains a periodic geodesic which does not
 bound a half-plane.
\end{thm}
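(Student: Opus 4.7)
The plan is to split according to the \emph{rank} of the geodesic flow on $M$. Define the rank of a unit tangent vector $v$ as the dimension of the space of parallel Jacobi fields along the geodesic $\gamma_v$; equivalently, the maximal dimension of an infinitesimal flat tangent to $\gamma_v$. A geodesic has rank $1$ exactly when it bounds no flat Euclidean half-plane, so the theorem amounts to the dichotomy: either $M$ contains a closed rank-$1$ geodesic, or $M$ has rank at least $2$ everywhere and is a higher-rank symmetric space. The rank is lower semi-continuous, so the set $R_1 \subset SM$ of rank-$1$ unit vectors is open, and it is clearly invariant under both the geodesic flow $\phi_t$ and the cocompact isometry group $\Gamma$.

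If $R_1 \neq \emptyset$, the plan is to produce a periodic orbit of $\phi_t$ inside $R_1$, whose projection to $M$ will be the desired periodic geodesic. Cocompactness of $\Gamma$ yields a Poincar\'e-recurrent rank-$1$ vector under $\phi_t$. Along a rank-$1$ geodesic in a $\CAT(0)$ space the transverse behaviour is hyperbolic: strict convexity of Busemann functions gives local contraction and expansion reminiscent of Anosov flows. I would then run a closing argument in the usual style --- approximate the recurrent orbit by a periodic one via the local product structure of stable and unstable horospheres --- and verify that the produced periodic orbit still lies in $R_1$ by openness of the rank-$1$ condition.

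If $R_1 = \emptyset$, so the rank of $M$ is at least $2$, the goal becomes to identify $M$ with a higher-rank symmetric space of noncompact type. On the open dense set of regular vectors, the parallel Jacobi fields span a smooth $\phi_t$-invariant distribution that integrates to a flow-invariant foliation of $SM$ by flat tori. Following Ballmann and Burns--Spatzier, I would study this foliation together with the topological Tits building on the visual boundary $\partial_\infty M$: in rank $\geq 2$ the incidence geometry of regular points and singular simplices satisfies building axioms, and a theorem promoting the topological Tits building to a smooth one yields, through its apartment-preserving isometries, a transitive Lie group action on the frame bundle; de~Rham irreducibility then rules out a nontrivial product splitting, and the classification of symmetric spaces forces $M$ to be irreducible of noncompact type and rank $\geq 2$. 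This higher-rank step is the main obstacle: passing from an abundance of local flats to a global symmetric structure is the genuine content of rank rigidity, and requires the full force of either the Burns--Spatzier Tits-building machinery or Ballmann's holonomy and distribution analysis, well beyond techniques used elsewhere in the paper. By comparison, the rank-$1$ closing argument is relatively soft once openness of $R_1$ and the Anosov-type behaviour along rank-$1$ geodesics are in hand.
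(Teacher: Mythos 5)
This theorem appears in the paper purely as a citation of Ballmann \cite{Ballmann:rank}, \cite{Ballmann:book} and Burns--Spatzier \cite{BurnsSpatzier}; the paper gives no proof of its own, so there is no internal argument to compare you against. Taken on its own, your sketch recovers the standard architecture of the Rank Rigidity Theorem: split on whether the open set of rank-$1$ vectors is nonempty, produce a periodic rank-$1$ axis in the rank-$1$ case, and invoke rank rigidity in the higher-rank case. Your candid remark that the higher-rank half carries essentially all the weight is accurate, and deferring it to the cited references is the right call given the scale of that argument.

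Two calibrations on the rank-$1$ half. You assert that a geodesic has rank $1$ ``exactly when'' it bounds no flat half-plane, but only one implication holds in general: a flat half-plane yields a perpendicular parallel Jacobi field and hence rank at least $2$, whereas rank at least $2$ is an infinitesimal condition that need not integrate to an actual flat. Fortunately that is the only direction your argument uses, since a periodic rank-$1$ axis a fortiori bounds no flat half-plane. More substantively, the ``Anosov-type closing'' step is softer than advertised: the geodesic flow of a nonpositively curved manifold is not uniformly hyperbolic, even along a rank-$1$ orbit, so the Anosov closing lemma does not apply off the shelf. What Ballmann's argument actually exploits is the local product structure on the ideal boundary near the endpoints $\gamma_v(\pm\infty)$ of a rank-$1$ geodesic (uniqueness of the connecting geodesic for nearby boundary pairs), combined with the duality condition that cocompactness supplies, to manufacture an axial isometry in $\Gamma$ with rank-$1$ axis. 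Poincar\'e recurrence is what underlies duality, but the closing is run on $\partial_\infty M$ rather than via a local shadowing lemma in $SM$. With those two adjustments your outline is faithful to the cited proofs.
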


\begin{prop}(\cite[Proposition 3.3]{KapovichLeeb:3manifolds})\label{lhalf}
Let $X$ be a locally compact, complete, simply
connected geodesic metric space which is locally $CAT(0)$. A
periodic geodesic $\pg$ in $X$ which does not bound a flat half-plane
satisfies
$$
\Dv^\pg (r) \succeq r^2\, .
$$
\end{prop}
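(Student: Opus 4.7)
The strategy is to derive the quadratic lower bound from the strongly contracting nature of rank-one periodic geodesics in proper $\mathrm{CAT}(0)$ spaces, and then exploit contraction via a simple covering/Lipschitz argument.

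\emph{Step 1: $\pg$ is strongly contracting.} Under the hypotheses, $X$ is a proper $\mathrm{CAT}(0)$ space by Cartan--Hadamard. A periodic geodesic $\pg$ which does not bound a flat half-plane is a rank-one geodesic in the sense of Ballmann, and one invokes the classical fact that such $\pg$ admits a \emph{strongly contracting} nearest-point projection: there is a constant $A>0$ such that for every metric ball $B\subset X$ with $B\cap \pg=\emptyset$, the $1$-Lipschitz projection $\pi\colon X\to \pg$ satisfies $\mathrm{diam}(\pi(B))\leq A$.

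\emph{Step 2: contraction yields quadratic divergence.} Fix $\delta\in(0,1)$ small and $r$ large. Let $\alpha\colon [0,L]\to X$ be an arclength-parametrized path from $\pg(-r)$ to $\pg(r)$ avoiding $B(\pg(0),\delta r)$. Let $s(t)\in\R$ denote the signed $\pg$-coordinate of $\pi(\alpha(t))$ and set $\rho(t):=\dist(\alpha(t),\pg)$; note $s$ is $1$-Lipschitz. The triangle inequality gives $\dist(\alpha(t),\pg(0))\leq \rho(t)+|s(t)|$, so the avoidance hypothesis yields
\begin{equation*}
\rho(t)+|s(t)|\;\geq\;\delta r\qquad\text{for all }t\in[0,L].
\end{equation*}
Since $s(0)=-r$ and $s(L)=r$, choose $t_-<t_+$ in $[0,L]$ with $s(t_\pm)=\pm\delta r/2$ and $s([t_-,t_+])\subseteq[-\delta r/2,\delta r/2]$. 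On $[t_-,t_+]$ we then have $\rho(t)\geq \delta r/2$, so every ball $B(\alpha(t),\delta r/4)$ is disjoint from $\pg$. By Step 1, its projection has diameter $\leq A$; consequently, for $t,t'\in[t_-,t_+]$ with $|t-t'|\leq \delta r/4$ one has $\dist(\alpha(t),\alpha(t'))\leq \delta r/4$, hence $|s(t)-s(t')|\leq A$. Thus $s|_{[t_-,t_+]}$ varies by at most $A$ over every sub-interval of length $\delta r/4$, while its total variation is $\delta r$; chaining gives
\begin{equation*}
t_+-t_-\;\geq\;\frac{\delta r}{A}\cdot\frac{\delta r}{4}\;=\;\frac{\delta^2 r^2}{4A}.
\end{equation*}
Hence $L\geq t_+-t_-\geq \delta^2 r^2/(4A)$, and since $\alpha$ was arbitrary, $\Dv^\pg(r)\succeq r^2$.

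\emph{Main obstacle.} The substantive content is Step 1: converting the qualitative hypothesis ``no flat half-plane'' into a quantitative contraction constant $A$. The standard route is a compactness argument in the proper $\mathrm{CAT}(0)$ space $X$. Assume strong contraction fails and choose a sequence of balls $B_n$ disjoint from $\pg$ with $\mathrm{diam}(\pi(B_n))\to\infty$. Using the $\Z$-action by translations along $\pg$ one may assume the projections $\pi(B_n)$ begin near $\pg(0)$; by Arzel\`a--Ascoli, sequences of geodesic segments from $\pi(B_n)$ to $B_n$ subconverge to a geodesic ray issuing orthogonally from $\pg$, and combining these rays with the translation structure assembles a flat half-plane bounded by $\pg$, a contradiction. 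The only technicalities in Step 2 (possible non-monotonicity of $s$ and the choice of $t_\pm$) do not weaken the final bound.
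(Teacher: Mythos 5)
The paper does not actually prove Proposition~\ref{lhalf}; it quotes it verbatim from Kapovich--Leeb \cite[Proposition~3.3]{KapovichLeeb:3manifolds}. What the paper does prove (in Section~\ref{sect:Morsediv}) is the strictly more general Theorem~\ref{cor2}: a Morse quasi-geodesic in \emph{any} $CAT(0)$ space, with no local compactness hypothesis, has at least quadratic divergence. So the right comparison is between your argument and the proof of Theorem~\ref{cor2}.

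Your Step~2 is, up to cosmetic differences, the same ``chain through projection windows'' argument the paper uses for Theorem~\ref{cor2}: divide the detour path into blocks of length $\asymp r$, show that the projections of consecutive block endpoints onto the geodesic move by at most a uniform constant, and conclude that the number of blocks is $\succeq r$, hence the total length is $\succeq r^2$. (Your bookkeeping drops a lower-order term in the chaining inequality --- the correct bound is $t_+-t_- \ge (\delta r/A-1)\,\delta r/4$ --- but this has no effect asymptotically.) The genuinely different ingredient is the projection bound. You obtain it from \emph{strong contraction} of a rank-one periodic geodesic, proved by an Arzel\`a--Ascoli/flat-strip compactness argument, which is the route essentially taken by Kapovich--Leeb themselves; this is exactly where local compactness and periodicity are used. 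The paper instead proves the weaker-looking Lemma~\ref{lem1} (points that are closer to each other than to the quasi-geodesic have nearby projections), obtained from the Morse property via asymptotic cones (Lemmas~\ref{lem:proj1} and~\ref{lem:proj2}). That approach buys the removal of both local compactness and periodicity, which is precisely the advertised generalization of Proposition~\ref{lhalf}. Conversely, your contraction route gives a cleaner quantitative statement (a single contraction constant $A$) when local compactness is available. One caveat: your Step~1 is a sketch, and the convergence to a flat half-plane deserves more care --- one must normalize the witnesses (e.g.\ take centers $c_n$ with $\dist(c_n,\pg)=r_n$ and use convexity of the distance function to $\pg$ to produce a growing family of flat quadrilaterals) before Arzel\`a--Ascoli can be applied; as written it would not quite compile into a complete proof, though the idea is standard and correct.
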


The lower estimate on divergence that enters as a main ingredient in this converse is as important as the converse itself. In Section \ref{sect:Morsediv} we prove that the estimate in Proposition \ref{lhalf} holds in a considerably more general $CAT(0)$ setting, as well as for many of the examples of groups with Morse elements quoted above.

\section{Divergence and networks of spaces}\label{sec:networks}

\subsection{Tight Networks}

We strengthen the definitions of networks of subspaces and subgroups
from \cite{BehrstockDrutuMosher:thick1}, with a view towards
divergence estimate problems.

A subset $A$ in a metric space is called $C$--\emph{path connected} if any two points in $A$ can be
connected by a path in $\nbhd_C(A)\, $.  We say that $A$ is $(C,
L)$--\emph{quasi-convex} if any two points in $A$ can be connected
in $\nbhd_C(A)\, $ by a $(L ,L)$--quasi-geodesic. When $C=L$ we simply say that $A$ is $C$-quasi-convex.

\begin{defn}\textbf{(tight network of subspaces).}\label{def:metricnetw}

Given $\tau $ and $\eta $ two non-negative real numbers we say that a
metric space $X$ is a \textit{$(\tau , \eta )$--tight network with
respect to a collection~$\lll$ of subsets} or that $\lll$ \textit{forms a $(\tau , \eta )$--tight network inside} $X$ if every subset $L$ in
$\lll$ with the induced metric is $(\tau, \eta)$--quasi-convex, $X$ is
covered by $\tau$--neighborhoods of the sets $L\in \lll$, and the
following condition is satisfied: for any two elements $L,L'\in \lll$
and any point $x$ such that $B(x, 3\tau )$ intersects both $L$
and $L'$, there exists a sequence of length $n\leq \eta$
$$L_{1}=L,L_{2}, \ldots , L_{n-1},L_{n}=L'\, ,\; \mbox{ with
}L_{i}\in\lll$$ such that for all $1\leq i<n$, $\nbhd_\tau(L_{i})\cap
\nbhd_\tau(L_{i+1})$ is of infinite diameter, $\eta$--path connected
and it intersects $B(x, \eta )\, .$ We write $(\bn)$ to refer to the
above condition about arbitrary pairs of elements in $\lll$.
\end{defn}

When $G$ is a finitely generated group and $\lll=\H$ a
collection of undistorted subgroups the following strengthening of the
above definition is sometimes easier to verify.

\begin{defn}[\textbf{tight algebraic network of
subgroups}]\label{dgnetwork}
We say a finitely generated group $G$ is an $M$--\emph{tight algebraic
network with respect to $\H$} or that $\H$ \emph{forms an $M$--tight algebraic network inside} $G$, if $\H$ is a collection of
$M$--quasi-convex subgroups whose union generates a finite-index
subgroup of $G$ and for any two subgroups $H,H'\in\H$ there exists a
finite sequence $H=H_{1}, \ldots,H_{n}=H'$ of subgroups in $\H$ such
that for all $1\leq i<n$, the intersection $H_{i}\cap H_{i+1}$ is
infinite and $M$--path connected.  We write $(\ban)$ to refer to the
above condition about arbitrary pairs $H,H'\in\H$.
\end{defn}

A modification of the proof of
\cite[Proposition 5.3]{BehrstockDrutuMosher:thick1} yields the following.

\begin{prop}\label{inetw}
Let $\H$ be a collection of subgroups that forms a tight algebraic
network inside a finitely generated group $G$ and let $G_1$ be the finite-index
subgroup of $G$ generated by the subgroups in $\H$.  Then $G$ is a
tight network with respect to the collection of left cosets
$$
\lll = \{ gH\mid g\in G_1, H\in \H \}\, .
$$
\end{prop}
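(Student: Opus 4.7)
The plan is to verify each of the three requirements in Definition~\ref{def:metricnetw}---quasi-convexity of the members of $\lll$, covering of $X$ by their $\tau$--neighborhoods, and the chain condition $(\bn)$---for $X = G$ equipped with a fixed word metric and the collection $\lll = \{gH \mid g \in G_1,\ H \in \H\}$. Fix constants as follows: let $D$ be the bound such that $G \subseteq \nbhd_D(G_1)$ (finite because $[G:G_1] < \infty$), and set $\tau \geq \max(M, D)$, reserving the choice of $\eta$ until the chain construction.

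For quasi-convexity, left translation is an isometry of $G$ in any word metric, so each $gH$ inherits the $M$--quasi-convexity of $H$ and is thus $(\tau,\tau)$--quasi-convex. For the covering, every $g \in G_1$ lies in its own coset $gH$ (for any $H \in \H$, using $1 \in H$), so $G_1 \subseteq \bigcup_{L \in \lll} L$, and $G \subseteq \nbhd_D(G_1) \subseteq \nbhd_\tau(\bigcup \lll)$. The substantive work is the chain condition.

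Given $L = gH$, $L' = g'H'$ in $\lll$ and a point $x$ with $B(x,3\tau)$ meeting both, I would first normalize: pick $p \in gH \cap B(x,3\tau)$ and $q \in g'H' \cap B(x,3\tau)$, and replace the representatives (so $gH = pH$, $g'H' = qH'$), obtaining $|g^{-1}g'| < 6\tau$. I then build the chain in two phases. In \emph{Phase A} I apply $(\ban)$ directly to $H,H'$: it yields $H = H_1, \ldots, H_n = H'$ in $\H$ with each $H_i \cap H_{i+1}$ infinite and $M$--path connected; left-translating by $g$ gives the chain $gH = gH_1, \ldots, gH_n = gH'$, whose successive intersections $g(H_i \cap H_{i+1})$ are infinite, $M$--path connected, contain the point $g$ itself, and so meet $B(x,3\tau) \subseteq B(x,\eta)$.

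In \emph{Phase B} I bridge the two distinct cosets $gH'$ and $g'H'$ of the same subgroup. The key idea is to factor $g^{-1}g' = s_1 s_2 \cdots s_k$ with each $s_j$ lying in some $K_j \in \H$ (possible because $\H$ generates $G_1$); since $|g^{-1}g'| < 6\tau$, one obtains $k$ bounded in terms of $\tau$ and the generating set. Then I iterate: starting from $gH'$, apply $(\ban)$ to go from $H' \to K_1$, translate by $g$ to get a chain to $gK_1$, observe $gK_1 = gs_1 K_1$ (since $s_1 \in K_1$) so the basepoint has shifted to $gs_1$, then apply $(\ban)$ again to move from $K_1$ to $K_2$ with this new basepoint, and so on. After $k$ steps the basepoint is $g s_1 \cdots s_k = g'$; a final $(\ban)$-chain from $K_k$ to $H'$ (translated by $g'$) lands at $g'H'$. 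All intermediate cosets contain a translate of the form $g s_1 \cdots s_j$, which lies within $3\tau + k$ of $x$, and all intersections remain infinite and $M$--path connected because translation is an isometry and $(\ban)$ guarantees these properties for the untranslated subgroup intersections. The resulting $\eta$ can be taken as the maximum of $M$, the uniform bound on $(\ban)$-chain lengths among the finitely many subgroups in play, and a constant multiple of $\tau$.

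The main obstacle is Phase B: bridging $gH'$ and $g'H'$ requires a delicate interleaving of $(\ban)$-chains with basepoint shifts, and one must verify that the neighborhood-intersections stay both anchored near $x$ and of infinite diameter. Obtaining a \emph{uniform} $\eta$ further requires a uniform bound on $(\ban)$-chain length over the pairs of subgroups encountered; this is automatic if $\H$ is finite, and in general one invokes the tightness hypothesis to extract such a bound.
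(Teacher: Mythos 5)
Your overall construction of the chain---reduce to one coset at the origin, apply $(\ban)$ to pass from $H$ to $H'$, then bridge the translation by writing $g^{-1}g'$ as a short word in elements of $\bigcup\H$ and interleaving $(\ban)$-chains with basepoint shifts---is essentially what the paper does; its one-line appeal to ``the argument in Proposition~5.3 of \cite{BehrstockDrutuMosher:thick1}'' is exactly the concatenation of translated $(\ban)$-chains you spell out in Phases A and B. So the architecture is right, and your attention to where the chain meets $B(x,\eta)$ is also correct.

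There is, however, a genuine gap in the verification of $(\bn)$. You argue that the ``successive intersections $g(H_i\cap H_{i+1})$ are infinite, $M$--path connected,'' and in Phase~B likewise that ``all intersections remain infinite and $M$--path connected because translation is an isometry and $(\ban)$ guarantees these properties for the untranslated subgroup intersections.'' But condition $(\bn)$ is a statement about the set $\nbhd_\tau(L_i)\cap\nbhd_\tau(L_{i+1})$, which a priori can be substantially larger than $L_i\cap L_{i+1}=g(H_i\cap H_{i+1})$. Knowing that a subset is infinite and $M$--path connected does not imply that the ambient coarse intersection is of infinite diameter (that part is automatic by containment) \emph{and $\eta$--path connected}: a point of $\nbhd_\tau(L_i)\cap\nbhd_\tau(L_{i+1})$ lying far from $g(H_i\cap H_{i+1})$ has no obvious path to the rest of the set inside a bounded neighborhood of the intersection. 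The paper closes this gap by invoking \cite[Lemma~2.2]{MSW:QTTwo}, which shows that $\nbhd_\tau(L_i)\cap\nbhd_\tau(L_{i+1})$ lies within bounded Hausdorff distance of $g(H_i\cap H_{i+1})$; from that, $\eta$--path connectedness (for a possibly larger $\eta$) follows. Your proof needs this step, or an equivalent coarse-intersection argument, and as written it silently conflates the two sets.

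Your concluding remark about uniformity of $\eta$ is a fair concern; note that in the paper's own applications $\H$ is finite (strong algebraic thickness is defined with respect to a \emph{finite} collection of subgroups), and the reduction to $x=1$ and $g\in B(1,3\tau)\cap G_1$ leaves only finitely many data, so the uniform bound is available there even if neither you nor the paper spells it out.
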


\proof Since cosets cover the subgroup $G_1$ and $G\subset \nn_\tau (G_1)$ for some $\tau >0$, it remains to prove $(\bn)$. By left translation we may assume that $x=1$; also, it clearly suffices to prove condition $(\bn)$ for $L=H$, and $L'=gH'$, where
$H,H'\in \H$ and $g\in G_1\cap \Bl (1, 3\tau
)$.

The argument in Proposition 5.3 in \cite{BehrstockDrutuMosher:thick1}
implies that for every such pair there exists a sequence
$L_1=H,L_2,..., L_n = gH'$ in $\lll$ composed of concatenations of
left translations of sequences as in $(\ban)$ and that all $L_i$ intersect $B(1, 3\tau )$.  In particular every
pair $L_i,L_{i+1}$ is a left translation of a pair of subgroups
as in ($\ban$) of the form $g'H_1,g'H_2$ with $H_{1}\cap
H_{2}$ infinite and $M$--path connected and $g'\in G_1$ closer to $1$ than $g$.

By \cite[Lemma~2.2]{MSW:QTTwo} the intersection $\nn_\tau (L_i) \cap
\nn_\tau (L_{i+1})$ is at finite Hausdorff distance from $g(H_1\cap
H_2)$, hence it is of infinite diameter and $\eta$--path connected for
$\eta$ large enough.\endproof

\medskip

The following yields a large family of examples.

\begin{prop}\label{networkgraphgroups} Let $G$ be a fundamental group
of a graph of groups where all the vertex groups are quasi-convex and
the edge groups are infinite.  Then $G$ is a tight network with
respect to the family of all left cosets of vertex groups.
Moreover, if the graph of groups is simply connected, then $G$ is a
tight algebraic network with respect to the family of vertex groups.
\end{prop}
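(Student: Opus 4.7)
The plan is to dispatch the simply-connected case first by direct verification of Definition \ref{dgnetwork}, and then handle a general graph of groups via its Bass-Serre tree.

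For the algebraic network claim, assume the underlying graph $\Gamma$ is a tree. Then there are no stable letters, so the vertex groups generate $G$, and they are quasi-convex by hypothesis. Given two vertex groups $H, H'$, the unique $\Gamma$-geodesic joining the corresponding vertices produces a sequence $H = H_0, \ldots, H_n = H'$ with each consecutive pair containing the image of a common edge group, which is infinite by hypothesis and can be taken finitely generated in this setting. Enlarging $M$ to include fixed generating sets for the edge groups renders these intersections infinite and $M$-path connected, verifying $(\ban)$. Proposition \ref{inetw} then produces the tight network of left cosets in the tree case.

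For a general graph of groups, I would work with the Bass-Serre tree $\mathcal{T}$: its vertex set is naturally identified with $\lll=\{gG_v : g\in G,\, v \in V(\Gamma)\}$ and $G$ acts cocompactly by left multiplication with infinite edge stabilizers. Each coset $gG_v$ is quasi-convex with the same constant as $G_v$ and $\lll$ covers $G$, so only the local condition $(\bn)$ of Definition \ref{def:metricnetw} remains. Suppose $L = gG_v$ and $L' = g'G_{v'}$ both meet $B(x, 3\tau)$; translate to assume $x=1$ and replace $g, g'$ by coset representatives in $B(1, 3\tau)$. Since this ball is finite and $\Gamma$ has finitely many vertices, the $\mathcal{T}$-distance between the associated vertices $\tilde v, \tilde v'$ admits a uniform upper bound $\eta_0$ depending only on $\tau$ and $\Gamma$.

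Take the $\mathcal{T}$-geodesic $\tilde v_0,\ldots,\tilde v_n$ of length $n\le\eta_0$, write $\tilde v_i = g_i G_{w_i}$, and denote by $K_i$ the stabilizer of the edge joining $\tilde v_i$ to $\tilde v_{i+1}$, which is a conjugate of an edge group and hence infinite. Adjacent Bass-Serre vertices differ by multiplication by a bounded-length word in $G$ (an edge-group generator or a stable letter), so one may inductively choose representatives with $d(g_i, g_{i+1}) \leq C$ uniformly, and therefore each $g_i$ lies in a bounded neighborhood of $1$. Setting $L_i = g_i G_{w_i}$, for any $k \in K_i$ both $kg_i \in L_i$ and $kg_{i+1} \in L_{i+1}$ hold with $d(kg_i, kg_{i+1}) = d(g_i, g_{i+1}) \leq C$, so $K_i g_i \subseteq \nn_\tau(L_i) \cap \nn_\tau(L_{i+1})$ once $\tau \geq C$; this set is of infinite diameter, $\eta$-path connected for $\eta$ large (using finite generation of edge groups), and contains $g_i \in B(1, \eta)$. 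This verifies $(\bn)$. The main technical hurdle is producing the uniform bounds on the intermediate representatives $g_i$ and on the constants $C,\eta_0,\tau$; this relies on the cocompactness of the Bass-Serre action and the finite generation of the edge groups, which is implicit in the ``quasi-convex vertex groups, infinite edge groups'' setup.
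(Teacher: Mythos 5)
Your proposal is correct and takes essentially the same route as the paper: translate to $x=1$, pass to the Bass-Serre tree, take the (unique) geodesic between the vertices corresponding to $L$ and $L'$, and observe that the chain of cosets along this geodesic satisfies $(\bn)$ because consecutive cosets contain a common coset of an (infinite) edge stabilizer and the whole chain stays near the identity. The only organizational difference is that you dispatch the simply-connected case first via Proposition~\ref{inetw} and then redo the general case, while the paper does the general case directly and notes the algebraic claim afterward; your bookkeeping with the representatives $g_i$ is a more explicit version of the paper's appeal to there being only finitely many cosets meeting $B(1,3\tau)$.
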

\proof Since left cosets of a subgroup cover and the vertex sets are
quasi-convex by hypothesis, to show that $G$ is a tight network it
remains to verify property~$(\bn)$.  Fix two left cosets $L,L'$ of
vertex subgroups of $G$ and a point $x$ in $\nbhd_{3\tau} (L )\cap
\nbhd_{3\tau}(L')$.  By left translation we may assume that $x=1$.
In the Bass-Serre tree, $L$ and $L'$ correspond to vertices at
distance at most $n$ apart with $n\leq 6\tau$ and letting $L=L_{0},
L_{1},\ldots, L_{n}=L'$ be the sequence of left cosets corresponding
to the vertices in the shortest path in the tree from $L$ to $L'$, it
satisfies all the properties of $(\bn)$.  In particular all $L_i$
intersect $B(1, \eta )$ for $\eta $ large enough, because there are
finitely many possibilities for $L,L'$ (left cosets of vertex groups
intersecting $B(1, 3\tau )$) and therefore for $L_i$.

If the graph of groups is simply connected, then the vertex groups
generate $G$.  The remaining properties of tight algebraic network follow immediately.  \endproof

Tight networks define natural decompositions of geodesics, as described below.

\begin{lem}\label{lem:covergeod}
Let $X$ be a geodesic metric space and $\lll$ a collection of subsets
of $X$ such that $X = \bigcup_{L \in \lll} \nbhd_\tau(L)$.  Every geodesic $[x,y]$ contains a finite sequence of consecutive
points $x_0=x,x_1,x_2,...,x_{n-1}, x_{n}=y$ such that:

 \begin{enumerate}
   \item\label{cover1} for every $i\in \{ 0,1,...,n-1\}$ there exists
   $L_i\in \lll$ such that $x_i,x_{i+1}\in\nbhd_{3\tau} (L_i )\, $;
   \item\label{cover2} for every $i\in \{ 0,1,...,n-2\}$, $\dist
   (x_i,x_{i+1}) \geq \tau $.
 \end{enumerate}
 \end{lem}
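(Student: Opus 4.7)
The plan is a straightforward greedy construction along the geodesic, using the covering hypothesis to select each $L_i$ as we walk. There is no real obstacle here; the only thing to be careful about is terminating cleanly and making sure the final segment (which is allowed to be short) still fits inside a common $3\tau$-neighborhood.

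Parametrize the geodesic as $\gamma \colon [0,d] \to X$ with $\gamma(0)=x$ and $\gamma(d)=y$, where $d=\dist(x,y)$. The idea is to march along $\gamma$ in steps of length exactly $2\tau$, stopping once we are within $2\tau$ of $y$. Concretely, define $t_0=0$ and, given $t_i$, set
\[
t_{i+1} \;=\; \begin{cases} t_i + 2\tau & \text{if } d - t_i \ge 2\tau,\\ d & \text{if } d-t_i < 2\tau,\end{cases}
\]
terminating the process the first time we use the second case. Let $n$ denote the last index and put $x_i = \gamma(t_i)$; since $d$ is finite we have $n \le \lceil d/(2\tau)\rceil + 1$.

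For each $i\in\{0,1,\dots,n-1\}$, the hypothesis $X=\bigcup_{L\in\lll}\nbhd_\tau(L)$ lets us choose some $L_i\in\lll$ with $x_i\in\nbhd_\tau(L_i)$. Then automatically $x_i\in\nbhd_{3\tau}(L_i)$, and since $\dist(x_i,x_{i+1})=t_{i+1}-t_i\le 2\tau$,
\[
\dist(x_{i+1},L_i)\;\le\;\dist(x_{i+1},x_i)+\dist(x_i,L_i)\;<\;2\tau+\tau\;=\;3\tau,
\]
so $x_{i+1}\in\nbhd_{3\tau}(L_i)$ as well, verifying condition~\eqref{cover1}. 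For condition~\eqref{cover2}, observe that when $i\le n-2$ we are in the first case of the recursion, so $\dist(x_i,x_{i+1})=2\tau\ge\tau$; the last step (from $x_{n-1}$ to $x_n=y$) is the only one allowed to be shorter, and is the reason the conclusion excludes $i=n-1$. If $d<2\tau$ to begin with, we simply take $n=1$, and condition~\eqref{cover2} is vacuous while condition~\eqref{cover1} follows as above.
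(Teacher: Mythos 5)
Your proof is correct and follows essentially the same greedy idea as the paper: march along the geodesic, picking a covering set $L_i$ near each point, and show the next point stays within $3\tau$ of $L_i$. The only cosmetic difference is that you use fixed steps of length $2\tau$ (which keeps the bookkeeping simple and gives a clean bound on $n$), whereas the paper's construction greedily takes $x_{i+1}$ to be the farthest point along $[x,y]$ still lying in $\onn_{2\tau}(L_i)$, arguing that each step has length at least $\tau$ because the closed $\tau$-ball about $x_i$ is contained in $\onn_{2\tau}(L_i)$; both variants deliver the required properties.
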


\proof We inductively construct a sequence of consecutive points
$x_0=x, x_1, x_2, ..., x_n$ on $[x,y]$ such that for every $i\in \{
0,1,...,n-1\}$ there exists $L_i\in \lll$ with the property that
$x_i\in \nn_\tau (L_i)$ and $x_{i+1}$ is the farthest point from $x$
on $[x,y]$ contained in $\onn_{2\tau }(L_i)$.  Assume that we found
$x_0=x,x_1,...,x_k$.  Since $x_k \in X = \bigcup_{L \in \lll}
\nbhd_\tau(L)$ there exists $L_{k+1}\neq L_k$ such that $x_k \in
\nbhd_\tau(L_{k+1})$.  Pick $x_{k+1}$ to be the farthest point from
$x$ on $[x,y]$ contained in $\onn_{2\tau }(L_{k+1})$.  By our choice
of $x_{k}$ it follows that this process will terminate with $n\leq
\lceil \frac{\dist (x,y )}{\tau}\rceil$.  \endproof

\begin{lem}\label{lem:covergeod2}
Let $X$ be a geodesic metric space which is a $(\tau , \eta )$--tight
network with respect to a collection of subsets $\lll$, let $[x,y]$ be
a geodesic in $X$ and let $L,L'\in \lll$ be such that $x\in \nn_\tau
(L)$ and $y\in \nn_\tau (L')$.  There exists a finite sequence
of consecutive points $x_0=x,x_1,x_2,...,x_{n-1}, x_{n}=y$ on $[x,y]$
and a finite sequence of subsets $L_{j}\in\lll$, satisfying
$L_0=L,L_1,..., L_q=L'$ with $q\leq n\eta$ such that
 \begin{enumerate}
   \item\label{cover2.1} for every $i\in \{ 0,1,...,n-2\}$, $\dist
   (x_i,x_{i+1}) \geq \tau $;

   \smallskip

   \item\label{cover2.2} for every $j\in \{0,1,...q-1\}$ the
   intersection $\nn_\tau (L_j)\cap \nn_\tau (L_{j+1})$ is of infinite
   diameter and $\eta$--path connected;

       \smallskip

   \item\label{cover2.3} there exist $j_0=0< j_1<...<j_{n-1} <j_n=q$
   such that if $j_{i-1 }\leq j\leq j_{i}$ then $\nn_\tau (L_j)\cap
   \nn_\tau (L_{j+1})$ intersects $\Bl (x_i , \eta )\, $.
 \end{enumerate}
 \end{lem}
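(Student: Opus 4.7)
The plan is to combine Lemma \ref{lem:covergeod} with the tight-network axiom $(\bn)$: first decompose the geodesic $[x,y]$ into points close to auxiliary elements of $\lll$, then glue together short chains between these auxiliary sets to produce a single global chain from $L$ to $L'$.

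I would first apply Lemma \ref{lem:covergeod} to $[x,y]$, obtaining $x_0 = x, x_1, \ldots, x_n = y$ together with auxiliary subsets $M_0, \ldots, M_{n-1} \in \lll$ satisfying $x_i, x_{i+1} \in \nn_{3\tau}(M_i)$ and $\dist(x_i, x_{i+1}) \geq \tau$ for $0 \leq i \leq n-2$; this immediately yields condition (\ref{cover2.1}). Inspecting the inductive construction in the proof of Lemma \ref{lem:covergeod}, the first auxiliary set may be chosen freely from among those whose $\tau$-neighborhood contains $x_0$, so I would set $M_0 = L$.

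Next, for each $i \in \{1, \ldots, n-1\}$ the point $x_i$ lies in $\nn_{3\tau}(M_{i-1}) \cap \nn_{3\tau}(M_i)$, so $B(x_i, 3\tau)$ meets both $M_{i-1}$ and $M_i$; axiom $(\bn)$ then yields a chain $M_{i-1} = N^{(i)}_0, N^{(i)}_1, \ldots, N^{(i)}_{m_i} = M_i$ in $\lll$ of length $m_i \leq \eta$ whose successive $\nn_\tau$-intersections are of infinite diameter, $\eta$-path connected, and meet $B(x_i, \eta)$. Applying the same step at the endpoint, where $y = x_n \in \nn_{3\tau}(M_{n-1}) \cap \nn_\tau(L')$, produces an analogous chain from $M_{n-1}$ to $L'$ of length $m_n \leq \eta$ with intersection data meeting $B(x_n, \eta)$. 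Concatenating these $n$ sub-chains in order yields $L_0 = L, L_1, \ldots, L_q = L'$ with $q = \sum_{i=1}^n m_i \leq n\eta$; setting $j_i$ equal to the cumulative length of the first $i$ sub-chains simultaneously delivers (\ref{cover2.2}) and (\ref{cover2.3}).

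The only subtlety I anticipate is the bookkeeping required to ensure the strict inequalities $j_0 < j_1 < \cdots < j_n$, which is equivalent to each $m_i \geq 1$. For the interior indices, the distinctness of consecutive $M$'s is built into the proof of Lemma \ref{lem:covergeod} (where $L_{k+1}$ is chosen distinct from $L_k$); for the boundary pieces, any sub-chain of length $0$ that arises (e.g.\ if $M_0 = M_1$ or $M_{n-1} = L'$) can simply be discarded, which only shortens the global chain and preserves the bound $q \leq n\eta$. Beyond this bookkeeping, the argument is a direct concatenation and I do not foresee any substantive obstacle.
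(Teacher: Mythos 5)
Your proof is correct and matches the paper's argument almost exactly: both apply Lemma \ref{lem:covergeod} to produce the subdivision points $x_i$ and the auxiliary subsets (your $M_i$, the paper's $L_i$) with the first one taken to be $L$, then invoke axiom $(\bn)$ at each $x_i$ for $i\geq 1$ to bridge consecutive auxiliary sets (and finally from the last auxiliary set to $L'$ at $x_n=y$), and concatenate the resulting short chains. Your remark about the strict inequalities $j_0<j_1<\cdots<j_n$ is a genuine bookkeeping point the paper passes over in silence; note that for the interior steps it is automatic, since Lemma \ref{lem:covergeod} chooses consecutive auxiliary sets to be distinct, so any $(\bn)$-chain between them has at least one step, and only the final chain from $M_{n-1}$ to $L'$ can degenerate, which your discard-and-renumber fix handles.
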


\proof For the geodesic $[x,y]$ consider a sequence
$x_0=x,x_1,x_2,...,x_{n-1}, x_{n}=y$ as in Lemma \ref{lem:covergeod},
and the sequence $L_0,L_1,..., L_{n-1}$ determined by the condition
Lemma~\ref{lem:covergeod} (\ref{cover1}), which can be taken with
$L_0=L$.

Property $(\bn)$ applied to each of the pairs $L_i,L_{i+1}$ with $i\in
\{ 0,1,...,n-2\}$, and to $L_{n-1}, L'$ provides a sequence
$L_0=L,L_1,..., L_q=L'$ with $q\leq n\eta$, such that for all $j\in
\{0,1,...q-1\}$ the intersection $\nn_\tau (L_j)\cap \nn_\tau
(L_{j+1})$ is of infinite diameter, $\eta$--path connected and it
intersects $\Bl (x_i , \eta )$ for some $0\leq i\leq n$.\endproof

The following shows that tight networks are a uniform version of the
\emph{networks} in \cite[Definition 5.1]{BehrstockDrutuMosher:thick1}.

\begin{cor}\label{cor:unifN2}
Let $X$ be a geodesic metric space which is a $(\tau , \eta )$--tight
network with respect to a collection of subsets $\lll$.

For every $M\geq 0$ there exists $R=R(M)$ such that for every $L,L'\in
\lll\, $ with $\nbhd_M (L )\cap \nbhd_M (L')\neq \emptyset$ and any
point $a\in\nbhd_{M} (L )\cap \nbhd_{M}(L')$ there exists a sequence,
$L_{1}=L,L_{2}, \ldots , L_{n-1},L_{n}=L'$, with $L_{i}\in\lll$ and
$n\leq R$ such that for all $1\leq i<n$, $\nbhd_\tau(L_{i})\cap
\nbhd_\tau(L_{i+1})$ is of infinite diameter, $\eta$--path connected,
and intersects $B(a, R)$.
\end{cor}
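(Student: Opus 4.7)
The plan is to reduce to the defining case $M=3\tau$ of Definition~\ref{def:metricnetw} by moving the question onto a short geodesic joining $L$ and $L'$ that passes close to $a$, and then applying Lemma~\ref{lem:covergeod2}.

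First, given $a\in \nbhd_M(L)\cap \nbhd_M(L')$, I pick points $p\in L$ and $p'\in L'$ with $\dist(a,p)<M$ and $\dist(a,p')<M$; such points exist by definition of the open neighborhoods. By the triangle inequality any geodesic $[p,p']$ has length less than $2M$, and every point on it lies within distance $3M$ of $a$.

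Next, since $p\in L\subseteq \nbhd_\tau(L)$ and $p'\in L'\subseteq \nbhd_\tau(L')$, I apply Lemma~\ref{lem:covergeod2} to the geodesic $[p,p']$. This yields break points $x_0=p,x_1,\dots,x_n=p'$ on $[p,p']$ together with a sequence $L_1=L,L_2,\dots,L_q=L'$ in $\lll$ with $q\leq n\eta$, such that each intersection $\nbhd_\tau(L_j)\cap \nbhd_\tau(L_{j+1})$ is of infinite diameter, $\eta$--path connected, and contains a point within distance $\eta$ of some $x_{i(j)}$ on $[p,p']$.

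Finally, I bound the relevant constants uniformly in terms of $M,\tau,\eta$. Consecutive break points $x_i,x_{i+1}$ are at distance at least $\tau$ for $i\leq n-2$, so $n\leq \lceil 2M/\tau\rceil +2$, hence $q\leq \eta(\lceil 2M/\tau\rceil +2)$. Because every $x_i$ lies on $[p,p']$, it satisfies $\dist(a,x_i)<3M$, so each intersection $\nbhd_\tau(L_j)\cap \nbhd_\tau(L_{j+1})$ meets $B(a,3M+\eta)$. Setting
\[
R:=\max\bigl\{\eta(\lceil 2M/\tau\rceil +2),\ 3M+\eta\bigr\}
\]
simultaneously bounds the length of the chain and the radius needed to contain the pairwise intersections near $a$. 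The only conceptual step is this reduction from arbitrary $M$ to the definitional $3\tau$ via a short geodesic near $a$; the remainder is routine bookkeeping and I anticipate no substantial obstacle.
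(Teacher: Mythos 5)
Your argument is correct and follows the same route as the paper's: pick representatives $p\in L$, $p'\in L'$ within distance $M$ of $a$, apply Lemma~\ref{lem:covergeod2} to a geodesic $[p,p']$, and observe that all resulting intersection witnesses land near $a$. The only difference is bookkeeping: the paper records $R(M)=\eta+2M$ (the tighter ball radius, since every point of $[p,p']$ is within $2M$ of $a$, not $3M$) and leaves the bound on the chain length $n$ implicit, while you make that bound explicit and fold it into $R$; this is a minor tightening of exposition, not a different proof.
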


\proof Let $L,L'\in \lll$ be such that $\nbhd_M (L )\cap \nbhd_M
(L')\neq \emptyset$ and let $a$ be a point in the intersection.  Take
$x\in L$ and $y\in L'$ such that $\dist (x,a)<M$ and $\dist (y,a)<M$.
Lemma \ref{lem:covergeod2} applied to the geodesic $[x,y]$ yields a
sequence of $L_{i}$ for which $\nbhd_\tau (L_{i})\cap\nbhd_\tau
(L_{i+1})\cap \nbhd_{\eta}([x,y])\neq\emptyset$ and hence $\nbhd_\tau
(L_{i})\cap\nbhd_\tau (L_{i+1})\cap B(a,\eta+2M)\neq\emptyset$,
yielding the desired conclusion with $R(M)=\eta+2M$.  \endproof

\subsection{Network divergence}

We defined divergence
functions in Definition~\ref{Div}; for a network of spaces, we now
define an auxiliary function in order to bound the divergence of $X$.

\begin{defn}\label{def:netwdiv}
Let $X$ be a $(\tau , \eta )$--tight network with respect to a
collection~$\lll$ of subsets, let $\delta $ be a number in $(0,1)$ and
let $\gamma \geq 0$.  For every subset $L\in \lll$ we denote by
$\Dv_\gamma^L (n; \delta )$ the divergence function for $\nn_\tau (L)$
with the induced metric.

The \textit{network divergence of} $X$ is defined as
    $$\Dv_\gamma^\lll (n; \delta )=\sup_{L\in\lll}\Dv_\gamma^L (n;
    \delta )\, .$$
    \end{defn}

\begin{thm}\label{thickdivergence}
    Let $X$ be a geodesic metric space, let $\lll$ be a collection of subsets which forms a $(\tau , \eta
    )$--tight network inside $X$, let $\delta $ be a number in $(0,1)$ and let $\gamma \geq 0$. The divergence in $X$ satisfies
    \begin{equation}\label{divnetw}
	\Dv^{X}_\gamma (n; \delta ) \preceq_C n\, \Dv_\gamma^\lll (n;
	\delta ) \end{equation} where the constant $C$ only depends on
	the constants $\tau , \eta , \delta$ and $\gamma$.
 \end{thm}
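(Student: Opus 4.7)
The plan is to use Lemma~\ref{lem:covergeod2} to replace a geodesic $[a,b]$ by a concatenation of detours, each supported in a single thickening $\nn_\tau(L_j)$ and each avoiding the forbidden ball $B(c,\delta r-\gamma)$ by means of the internal divergence of $\nn_\tau(L_j)$. The factor $n$ in the desired bound will arise because the chain $L_0,\ldots,L_Q$ has length $O(n)$, while each subpath contributes $O(\Dv_\gamma^\lll(n;\delta))$.

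Concretely: given $a,b,c\in X$ with $\dist(a,b)\le n$ and $r=\dist(c,\{a,b\})>0$, I would fix a geodesic $[a,b]$, choose $L,L'\in\lll$ with $a\in\nn_\tau(L)$ and $b\in\nn_\tau(L')$, and apply Lemma~\ref{lem:covergeod2}. This yields points $a=x_0,\ldots,x_N=b$ on $[a,b]$ with $N\le n/\tau+O(1)$ and a chain $L=L_0,\ldots,L_Q=L'$ in $\lll$ with $Q\le N\eta=O(n)$, such that every consecutive intersection $\nn_\tau(L_j)\cap\nn_\tau(L_{j+1})$ is of infinite diameter, $\eta$-path connected, and meets $B(x_{i(j)},\eta)$ for some index $i(j)$. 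I then pick transition points $y_j\in\nn_\tau(L_j)\cap\nn_\tau(L_{j+1})\cap B(x_{i(j)},\eta)$ (with $y_{-1}=a$, $y_Q=b$); these satisfy $\dist(y_{j-1},y_j)\le n+O(1)$ because each lies near a point of $[a,b]$. For each $j$ I build a subpath $p_j$ from $y_{j-1}$ to $y_j$ in $\nn_\tau(L_j)$ avoiding $B(c,\delta r-\gamma)$: if $\nn_\tau(L_j)$ is disjoint from the ball, take an $(\eta,\eta)$-quasi-geodesic provided by $(\tau,\eta)$-quasi-convexity of $L_j$, of length $O(n)$; otherwise take $c_j\in\nn_\tau(L_j)$ nearest to $c$, so that $\dist(c,c_j)\le\delta r-\gamma$, and invoke the divergence in $\nn_\tau(L_j)$ for the triple $(y_{j-1},y_j,c_j)$, obtaining a path of length at most $\Dv_\gamma^{L_j}(n+O(1);\delta')\preceq\Dv_\gamma^\lll(n;\delta)$. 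Concatenating $p_0,\ldots,p_Q$ gives a path from $a$ to $b$ of total length $\preceq n\cdot\Dv_\gamma^\lll(n;\delta)$.

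The main obstacle will be parameter-matching between the global ball $B(c,\delta r-\gamma)$ and the local ball $B(c_j,\delta r_j-\gamma)$, where $r_j=\dist(c_j,\{y_{j-1},y_j\})$: to ensure that avoiding the latter inside $\nn_\tau(L_j)$ implies avoiding the former, a triangle-inequality computation forces a slight adjustment of $\delta$ and $\gamma$ in the internal divergence. These perturbations get absorbed into the constant $C$ using the $\asymp$-independence of $\Dv_\gamma(n;\delta)$ from $\delta\le\delta_0$ and $\gamma\ge\gamma_0$ given by Proposition~\ref{cdiv}. A subsidiary issue arises when some transition point $y_j$ itself falls inside the forbidden ball; this is handled by exploiting the infinite-diameter, $\eta$-path connected intersection $\nn_\tau(L_j)\cap\nn_\tau(L_{j+1})$ to shift $y_j$ just outside the ball, with the cost of the shift itself dominated by a local divergence and therefore not worsening the order of the final estimate.
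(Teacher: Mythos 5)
Your outline — apply Lemma~\ref{lem:covergeod2}, pick transition points in the consecutive intersections, and patch together local detours — is the same scaffolding the paper uses. But there is a genuine gap at the ``parameter-matching'' step, and it is not a matter of a slight adjustment of $\delta$ and $\gamma$ absorbed into the constant.

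The problem is the size of $r_j=\dist(c_j,\{y_{j-1},y_j\})$. For a path in $\nn_\tau(L_j)$ produced by the triple $(y_{j-1},y_j,c_j)$ to avoid the global ball $B(c,\delta r-\gamma)$, you need $B(c_j,\delta' r_j-\gamma')\supseteq B(c,\delta r-\gamma)$, i.e., by the triangle inequality, $\delta' r_j-\gamma'\geq \delta r-\gamma+\dist(c,c_j)$. With $c_j$ inside $B(c,\delta r-\gamma)$ this forces $r_j$ to be at least a definite multiple of $r$, with a factor larger than $1/\delta'$. But your transition points $y_j$ lie near points of $[a,b]$ or, when those fall in the forbidden ball, are shifted ``just outside'' it. In the latter case nothing stops $\dist(c_j,y_j)$ from being $O(1)$ (e.g., $c_j$ can be essentially as close to $y_j$ as you like), so $r_j$ can be bounded, and the local divergence then only guarantees avoidance of a ball of bounded radius around $c_j$, which need not contain $B(c,\delta r-\gamma)$ at all. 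No choice of $\delta'<1$ and $\gamma'\geq 0$ can repair this, because the required inequality scales like $r$ while $r_j$ does not. This is not a subsidiary issue: it is the crux.

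The fix — and what the paper actually does — is to push every transition point to distance $\sigma r+O(1)$ from $c$, where $\sigma$ is a constant chosen large enough (roughly $\sigma\geq 2+\delta$) so that $\dist(c_j,\{y_j,y_{j+1}\})\geq 2r$, which then yields $B(c_j,\delta r_j-\gamma)\supseteq B(c_j,2\delta r-\gamma)\supseteq B(c,\delta r-\gamma)$ and makes the local divergence usable with the same $\delta,\gamma$. The infinite-diameter, $\eta$-path connected intersections are exactly what make this long push possible: one finds a point $z'_{j}$ in the intersection at distance $\geq\sigma r$ from $c$ and a point along the $\eta$-neighborhood path from $z_j$ to $z'_j$ at distance $\sigma r+O(1)$ from $c$. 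Also note that after the push one gets $\dist(y_j,y_{j+1})\leq 2\sigma r+O(1)$, and since $r\leq \frac{n}{2(1-\delta)}$ once $B(c,\delta r-\gamma)$ meets $[a,b]$, this is still $O(n)$, so the length of each local detour is still bounded by $\Dv_\gamma^\lll(\alpha n+O(1);\delta)$ for a constant $\alpha$ depending only on $\sigma$ and $\delta$. Your proposal as written lacks this $\sigma$-push and therefore does not close.
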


\begin{proof} Let $a,b,c$ be three points such that $\dist (a,b) = n\,
$ and $\dist (c,\{ a,b\} )= r > \frac{\gamma}{\delta}\, $.  If the
ball $\Bl (c, \delta r -\gamma )$ does not intersect a geodesic
$[a,b]$ then $\dv_\gamma (a,b,c;\delta ) = n$.  Assume therefore that
$\Bl (c, \delta r -\gamma )$ intersects $[a,b]$.  This in particular
implies that $r\leq \delta r -\gamma + \frac{n}{2}$, whence $r\leq
\frac{n}{2(1-\delta )}\, $.

Lemma \ref{lem:covergeod2} applied to the geodesic $[a,b]$ implies the
existence of a finite sequence of points $x_0=a, x_1,..., x_k= b$ with
$k\leq \frac{n}{\tau }+2$, consecutive on the geodesic, and of a
finite sequence of subsets in $\lll$, $L_0=L,L_1,..., L_q=L'$ with
$q\leq k\eta\leq \eta \left( \frac{n}{\tau }+2 \right)$ such that:

 \begin{enumerate}
   \item\label{cv2.2} for every $j\in \{0,1,...q-1\}$ the intersection
   $\nn_\tau (L_j)\cap \nn_\tau (L_{j+1})$ is of infinite diameter and
   $\eta$--path connected;

       \smallskip

   \item\label{cv2.3} there exist $j_0=0< j_1<...<j_{n-1} <j_n=q$ such
   that if $j_{i-1 }\leq j\leq j_{i}$ then $\nn_\tau (L_j)\cap
   \nn_\tau (L_{j+1})$ intersects $\Bl (x_i , \eta )$.
 \end{enumerate}

Let $\sigma >0$ be large enough. Conditions on it will be added
later on.  Consider an arbitrary $j\in \{0,1,..., q-1\}\, $.  There
exists $i\in \{ 1,2,...,k-1\}\, $ such that $j_{i-1 }\leq j\leq
j_{i}$.  This implies that there exists a point $z_{j+1}$ in $\nn_\tau
(L_j)\cap \nn_\tau (L_{j+1})$ at distance at most $\eta$ from $x_i$.

Either $x_i$ is not in $\Bl (c, \sigma r )$ in which case we take
$y_{j+1} = x_i$, or $x_i$ is in $\Bl (c, \sigma r )$ and we proceed as
follows.  Since we are in a network, $\nn_\tau (L_j)\cap \nn_\tau
(L_{j+1})$ is of infinite diameter and thus contains an element $z_{j+1}'$
at distance $\geq \sigma r$ from $c$.  Since the intersection is
$\eta$--path connected there exists a path joining $z_{j+1}$ and
$z_{j+1}'$ in the $\eta$ neighborhood of the intersection, and one can
find on it a point at distance $\sigma r$ from $c$.  Thus, there exists
$y_{j+1}$ in $\nn_\tau (L_j)\cap \nn_\tau (L_{j+1})$ at distance
$\sigma r +O(1)$ from $c.$

If $x_0=a$ is in $\Bl (c, \sigma r )$ then we may find a point $y_0$
in $\nn_\tau (L_0)$ at distance $\sigma r +O(1)$ from $c.$ Likewise we
may have to replace $b$ by another point $y_{q+1}$.

We thus obtain a new sequence of points $y_0, y_1, ...., y_{q+1}$ all
at distance $\sigma r +O(1)$ from $c\, .$ For every $j\in \{
0,1,...,q\}$ the pair $y_j,y_{j+1}$ is inside $\nn_\tau (L_j)$.  If
$B(c, \delta r -\gamma )$ does not intersect $\nn_\tau (L_j)$ then
simply join $y_j,y_{j+1}$ by short geodesics to points in $L_j$ and
join those points by a $(\eta , \eta )$--quasi-geodesic in $\nn_\tau
(L_j)$. Otherwise, the intersection of $B(c, \delta r -\gamma )$ with
$\nn_\tau (L_j)$ contains a point, which we denote $c_j$. The ball
$B(c_j, 2 \delta r -\gamma )$ contains $B(c, \delta r -\gamma )$.
We choose $\sigma$ large enough so that $2 r \leq \dist (c_j ,
\{ y_j,y_{j+1}\} )$. Thus $\dist (c_j , \{ y_j,y_{j+1}\} ) \geq
\sigma r +O(1)$ allows us to join $y_j$ and $y_{j+1}$
outside the ball $B(c_j, 2 \delta r -\gamma )$ by a path of
length at most $\Dv^{L_j}_\gamma (2 \sigma r +O(1) ; \delta )\leq
\Dv^{L_j}_\gamma \left( \frac{\sigma}{1-\delta } n+O(1) ; \delta
\right)\, $.

The concatenation of all these curves gives a curve joining $a$ and
$b$ outside $B(c, \delta r -\gamma )$ and of length at most $2 \alpha
n + (q+1) \Dv^{\lll}_\gamma \left( \alpha n+O(1) ; \delta \right)$,
where $\alpha = \frac{\sigma}{1-\delta }\, $ and $q\leq \frac{\eta
}{\tau } n + 2\eta $.  Note that the first term stands for the lengths
of the geodesics joining $a$ and $y_0$, respectively $y_q$ and
$b$.\end{proof}

\begin{cor}\label{algthickdivergence}
    Let $G$ be a tight algebraic network with respect to the
    collection of subgroups $\mathcal{H}$. For every $\delta \in (0,1)$ and $\gamma \geq 0$,
    \begin{equation*}\label{divAnetw}
\Dv_\gamma^G (n; \delta) \preceq n\, \sup_{H\in
\mathcal{H}}\Dv_\gamma^H (n; \delta)\, .
    \end{equation*}
\end{cor}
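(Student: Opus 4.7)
The plan is to deduce this corollary directly by combining Proposition~\ref{inetw} with Theorem~\ref{thickdivergence}, together with the observation that left translation in the word metric is an isometry.

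First, I would apply Proposition~\ref{inetw}: if $\H$ forms a tight algebraic network inside $G$, and $G_1$ denotes the finite-index subgroup generated by $\bigcup_{H\in\H} H$, then $G$ itself is a $(\tau,\eta)$--tight network (for some constants $\tau,\eta$ depending on $M$ and the index $[G:G_1]$) with respect to the collection of left cosets
$$\lll \;=\; \{\, gH \;:\; g\in G_1,\; H\in \H\,\}.$$
The covering condition holds because $G$ lies in a bounded neighborhood of $G_1$ (since $[G:G_1]<\infty$), and $G_1$ is covered by the cosets in $\lll$. Applying Theorem~\ref{thickdivergence} to $X=G$ with this collection $\lll$ then yields
$$\Dv_\gamma^G(n;\delta) \;\preceq_C\; n\,\Dv_\gamma^{\lll}(n;\delta).$$

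Next I would identify the right-hand side with $\sup_{H\in \H}\Dv_\gamma^H(n;\delta)$. Since the word metric on $G$ is left-invariant, for every $g\in G_1$ and $H\in \H$ left multiplication by $g$ is an isometry of $G$ sending $\nn_\tau(H)$ bijectively and isometrically onto $\nn_\tau(gH)$. Consequently $\Dv_\gamma^{gH}(n;\delta) = \Dv_\gamma^{H}(n;\delta)$ for every $gH \in \lll$, and so
$$\Dv_\gamma^{\lll}(n;\delta) \;=\; \sup_{L\in \lll} \Dv_\gamma^L(n;\delta) \;=\; \sup_{H\in \H} \Dv_\gamma^H(n;\delta).$$
Substituting into the bound from Theorem~\ref{thickdivergence} gives the desired inequality.

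There is no substantive obstacle; the only mild point requiring care is that $\H$ generates only a finite-index subgroup $G_1$, not all of $G$. This is harmless because finite-index inclusion is a quasi-isometry, so the divergence of $G$ is $\asymp$--equivalent to that of $G_1$ (and the extra bounded neighborhood is absorbed into the constant $C$ in Theorem~\ref{thickdivergence}), and the isometric action of $G_1$ on itself by left translation is what makes all cosets $gH$ with $g\in G_1$ contribute the same divergence function as $H$.
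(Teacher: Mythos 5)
Your proof is correct and is precisely the intended derivation: the paper gives no separate argument for this corollary because it follows directly from Proposition~\ref{inetw} (to recognize $G$ as a tight network over the left cosets $\lll$), Theorem~\ref{thickdivergence} (to bound $\Dv^G_\gamma$ by $n\,\Dv^{\lll}_\gamma$), and the left-invariance of the word metric (to identify $\Dv^{gH}_\gamma$ with $\Dv^{H}_\gamma$, so that $\sup_{L\in\lll}\Dv^L_\gamma=\sup_{H\in\H}\Dv^H_\gamma$). You have assembled exactly the right ingredients in the right order, and your side remark about finite-index being a quasi-isometry is a fair observation even though Theorem~\ref{thickdivergence} already applies to $X=G$ directly.
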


\subsection{Thick spaces and groups}

In some sense, the subsets forming a network are building blocks and the ambient space is constructed out of them. By iterating this construction we obtain thick spaces, a notion introduced in
\cite{BehrstockDrutuMosher:thick1}.  The initial step in \cite{BehrstockDrutuMosher:thick1} (thick spaces of order zero) were taken to be unconstricted spaces. In this paper, we adapt the notion with a view to relate the order of
thickness to the order of the divergence function.  To this purpose, we introduce below \emph{strongly thick spaces} by taking
as initial step a subclass of unconstricted spaces, namely, wide spaces.

\begin{defn}\label{duunc}
A collection of metric spaces, $\B$, is \emph{uniformly wide} if:
\begin{itemize}
\item[(1)] for some positive real constants $\lambda, \kappa$, every point in every
space $B\in \B$ is at distance at most $\kappa$ from a bi-infinite
$(\lambda , \lambda)$--quasi-geodesic in $B$;

\item[(2)] for every sequence of spaces $(B_{i}, \dist_i)$ in $\B$, every ultrafilter
$\omega$, sequence of scaling constants $d= (d_i)$ and sequence of basepoints
$b=(b_i)$ with $b_i\in B_i$, the ultralimit $\ulim \left( B_{i}\, ,\,  b_i\, ,\,
\frac{1}{d_i}\, \dist_i \right)$ does not have cut-points.
\end{itemize}
\end{defn}

All the examples of unconstricted spaces listed in \cite[page
555]{BehrstockDrutuMosher:thick1} are in fact examples of uniformly
wide collections of metric spaces (with ``wide'' replacing
``unconstricted'' in Example 5).

The following uniform version of Proposition \ref{lm02} (ii) can be easily obtained by adapting the proof of \cite[Lemma 3.17 (ii)]{DrutuMozesSapir} and considering ultralimits of rescaled spaces in $\B$ instead of asymptotic cones.

\begin{prop}\label{lm03}
 Let $\B$ be a collection of uniformly wide metric spaces.

 For every $0<\delta <\frac{1}{54}$ and every $\gamma \ge 0$, the
 function $\sup_{B\in \B }\Dv^B_{\gamma }(n;\delta)$ is bounded by a
 linear function.
\end{prop}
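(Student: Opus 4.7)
The plan is to argue by contradiction, running the proof of \cite[Lemma~3.17(ii)]{DrutuMozesSapir} with the single modification that the asymptotic cones of one fixed space are replaced by ultralimits of rescaled spaces drawn from a varying sequence in $\B$. Suppose the conclusion fails. Then I would extract $B_i\in\B$ and triples $(a_i,b_i,c_i)$ in $B_i$ with $n_i:=\dist_i(a_i,b_i)\to\infty$ and $\dv_\gamma(a_i,b_i,c_i;\delta)\geq L_i n_i$ for some sequence $L_i\to\infty$. Observing that $\dv_\gamma(a_i,b_i,c_i;\delta)$ only exceeds $n_i$ when the ball $B(c_i,\delta r_i-\gamma)$ meets a geodesic $[a_i,b_i]$ (compare the triangle-inequality computation at the beginning of the proof of Theorem~\ref{thickdivergence}), I may further assume $r_i:=\dist_i(c_i,\{a_i,b_i\})\leq n_i/(2(1-\delta))$.

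Next, fixing a non-principal ultrafilter $\omega$, setting the scaling constants $d_i:=n_i$, and using the basepoints $c_i$, I would form the ultralimit
\[
C \;:=\; \ulim\left(B_i,\, c_i,\, \tfrac{1}{d_i}\dist_i\right).
\]
This is exactly the type of ultralimit governed by condition (2) of Definition~\ref{duunc}, so the uniform wideness of $\B$ would force $C$ to have no cut-points. On the other hand, the rescaled sequences $(a_i)$, $(b_i)$, $(c_i)$ remain bounded in $C$ and define points $\alpha,\beta,\gamma\in C$ with $\dist_C(\alpha,\beta)=1$ and $\dist_C(\gamma,\{\alpha,\beta\})\leq \tfrac{1}{2(1-\delta)}<1$.

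The final step, mirroring the corresponding point in \cite[Lemma~3.17(ii)]{DrutuMozesSapir}, is to deduce from the super-linear lower bound $\dv_\gamma(a_i,b_i,c_i;\delta)\geq L_i n_i$ that $\gamma$ is a topological cut-point of $C$ separating $\alpha$ from $\beta$; this would contradict the previous paragraph and close the argument. I expect this to be the main obstacle. The hypothesis $L_i\to\infty$ directly gives only that for each sufficiently large fixed $\varepsilon$ (essentially $\varepsilon>\delta/(1-\delta)$, where the assumption $\delta<\tfrac{1}{54}$ is used to make this threshold small) the metric ball $B(\gamma,\varepsilon)$ separates $\alpha$ from $\beta$ in $C$; upgrading this separation to an actual topological cut-point requires the diagonal extraction over shrinking scales carried out in \cite{DrutuMozesSapir}. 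That extraction refers to the source spaces only through the rescaled ultralimit $C$ and not through any feature intrinsic to a single space, so it transports to the present setting without essential change. The only modification needed is that $B_i$ is allowed to vary within $\B$, which is precisely what condition (2) of Definition~\ref{duunc} is formulated to permit.
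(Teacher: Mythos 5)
Your proposal is correct and takes the same approach the paper indicates: the paper's ``proof'' is a one-sentence remark that the result follows by adapting the proof of \cite[Lemma~3.17(ii)]{DrutuMozesSapir}, replacing asymptotic cones of a single space by ultralimits of rescaled spaces drawn from $\B$, and that is precisely what you do, including correctly identifying condition~(2) of Definition~\ref{duunc} as the mechanism that permits the sequence of source spaces to vary and honestly flagging the diagonal-extraction step from \cite{DrutuMozesSapir} as the part that must be checked to transport unchanged.
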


\begin{defn}\textbf{(metric thickness and uniform thickness).}
\begin{itemize}
\item[\textbf{(M$_0$)}] A metric space is called \textit{strongly
thick of order zero} if it is wide.  A family of metric spaces is
\textit{uniformly strongly thick of order zero} if it is uniformly
wide.

\medskip

\item[\textbf{(M$_{n+1}$)}] Given $\tau \ge 0$ and $n\in\N$ we say
that a metric space $X$ is \textit{$(\tau , \eta )$--strongly thick of order at
most~$n+1$ with respect to a collection of subsets~$\lll$} if $X$ is a $(\tau
,\eta)$--tight network with respect to $\lll$, and moreover:

\smallskip

\begin{enumerate}
\item[$(\theta)$] the subsets in $\lll$ endowed with the restriction of the
metric on $X$ compose a family uniformly strongly thick of order at
most $n$.
\end{enumerate}

\medskip

\noindent Further, $X$ is said to be $(\tau ,\eta
)$--\emph{strongly thick of order~$n$ (with respect to~$\lll$)} if it is $(\tau ,\eta )$--\emph{strongly thick of
order at most~$n$ (with respect to~$\lll$)} and for no
choices of $\tau, \eta$ and $\lll$ is it strongly thick of order at
most~$n-1$.

When $\lll$, $\tau$, $\eta$ are
irrelevant, we say that $X$ is \emph{strongly thick of order (at most) $n$} or simply that $X$ is \emph{strongly thick}.

\medskip

\item[\textbf{(M$_{\rm{uniform}}$)}] A family $\left\{ X_i \mid i\in I
\right\}$ of metric spaces is \textit{uniformly strongly thick of
order at most }$n+1$ if the following hold.
\begin{itemize}
\item[$(\upsilon \theta_1)$] There exist $\tau >0$ and $\eta >0$ such
that every $X_i$ is a $(\tau , \eta)$--tight network with respect to a
collection~$\lll_i$ of subsets;

\item[$(\upsilon \theta_2)$]
$\bigcup_{i\in I}\lll_i$ is uniformly strongly thick of order at most
$n$, where each $L\in \lll_i$ is endowed with the induced metric.
\end{itemize}
\end{itemize}
\end{defn}

The order of strong thickness is a quasi-isometry invariant, c.f.,
\cite[Remark~7.2]{BehrstockDrutuMosher:thick1}.

For finitely generated groups and subgroups with word metrics a
stronger version of thickness can be defined.

\begin{defn}[\textbf{strong algebraic thickness}]\label{dgthick}
Consider a finitely generated group~$G$.
\begin{itemize}
\item[\textbf{(A$_0$)}] $G$ is called \textit{strongly algebraically thick of order zero} if it is wide.

\medskip

\item[\textbf{(A$_n$)}] Given $M>0$, a group $G$ is called $M$--\emph{strongly algebraically thick of order at most~$n+1$ with respect to a finite collection of subgroups $\H$}, if:

\medskip

\begin{itemize}
    \item $G$ is an $M$--tight algebraic network with respect to $\H$;

    \medskip

    \item all subgroups in $\H$ are strongly algebraically thick of
    order at most $n$.
\end{itemize}
\end{itemize}
\end{defn}
$G$ is said to be \emph{strongly algebraically thick of order~$n+1$
with respect to $\H$}, when $n$ is the smallest value for which this
statement holds.

\begin{rmk} The property of strong algebraic thickness does not
depend on the choice of the word metric on the group $G$. This
raised the question, asked in
\cite[Question 7.5]{BehrstockDrutuMosher:thick1}, whether
strong algebraic thickness is invariant by quasi-isometry.  The
following example, due to Alessandro Sisto, answers this question by
showing that algebraic thickness is not a quasi-isometry invariant.
Let $G$ be the fundamental group of a closed graph manifold whose associated
graph of groups consists of one vertex and one edge.
Since any element acting
hyperbolically on the Bass-Serre tree is a Morse element
\cite{DrutuMozesSapir}, it follows that any subgroup that contains
such an element has cut-points in all its asymptotic cones. Hence,
any subgroup of $G$  which is both quasi-convex
and wide (or even unconstricted) is contained in a conjugate of the
vertex group. As any finite set of subgroups contained, each of them, in some
conjugate of the vertex group generate an infinite index subgroup, it
follows that no collection of unconstricted subgroups can constitute a
tight algebraic network in $G$. Hence $G$ is not algebraically thick.
On the other hand, all fundamental groups of closed graph manifold are
quasi-isometric \cite{BehrstockNeumann:qigraph}
and some of them are algebraically thick, e.g., the graph
manifold built by gluing together two Seifert fibered spaces each
with one boundary component.
\end{rmk}

\begin{examples}\label{thickexamples}
    The following are some known examples of thick and
    algebraically thick spaces and groups:

\begin{enumerate}
  \item Mapping class groups of surfaces $S$ with complexity $\xi(S)=
  3\times\rm{ genus} + \#\mbox{\rm{(boundary components) }}-3 >1$ are
  strongly algebraically thick of order 1
  \cite{BehrstockDrutuMosher:thick1}, \cite{Behrstock:asymptotic};
  \item $\autfn$ and $\outfn$, for $n\geq 3$,  are strongly
  algebraically thick of order at most 1 with respect to a family of
  quasi-flats of dimension 2
  \cite{BehrstockDrutuMosher:thick1};
  $\outfn$ is strongly algebraically thick of
  order 1, this was announced in \cite{Hamenstadt:announcement}, see
  also  \cite{Algom-Kfir}
  \cite{Hamenstadt:Linesminimaout};
  \item various Artin groups are strongly algebraically thick
  of order at most one \cite[$\S 10$]{BehrstockDrutuMosher:thick1},
  right-angled Artin groups which are thick of order 1 are classified
  in \cite{BehrstockCharney};
  \item\label{graphgroups} graphs of groups with infinite edge groups
  and whose vertex groups are thick of order~$n$, are thick of order
  at most $n+1$, by Proposition~\ref{networkgraphgroups}.  In
  particular, the fundamental group $G=\pi_1(M)$ of a non-geometric
  graph manifold is strongly thick of order~$1$;
  \item for every surface $S$ of finite type with complexity $\xi(S)\geq 6$, the Teichm\"{u}ller space with the Weil-Petersson metric is strongly thick of order one with respect to a family of quasi-flats
  of dimension two \cite[$\S
  12$]{BehrstockDrutuMosher:thick1}, \cite{Behrstock:asymptotic}.
\end{enumerate}
\end{examples}

A connection between order of thickness and order of the divergence function can be easily established using Theorem~\ref{thickdivergence}.

\begin{cor}\label{thickimpliespolynom}

If a family $\B$ of metric spaces is uniformly strongly thick of order
at most $n$ then for every $0<\delta < \frac{1}{54}$ and every $\gamma
\geq 0$,

$$
\sup_{B\in \B} \Dv^B_\gamma (x;\delta )\preceq x^{n+1}\, .
$$

In particular, if a metric space $X$ is strongly thick of order at
most $n$, then for every $\delta$ and $\gamma $ as above:
$$
\Dv_\gamma (x;\delta )\preceq x^{n+1}\, .
$$
\end{cor}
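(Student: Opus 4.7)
The plan is to prove both statements by induction on the order of strong thickness $n$, using Proposition \ref{lm03} to handle the base case and Theorem \ref{thickdivergence} to perform the inductive step. For the second statement, observe that a single metric space $X$ which is strongly thick of order at most $n$ gives rise to a one-element family $\{X\}$ which is trivially uniformly strongly thick of order at most $n$: the uniformity conditions $(\upsilon \theta_1)$ and $(\upsilon \theta_2)$, as well as the uniform wideness at level zero, are vacuous for a singleton family. Hence it suffices to prove the first, uniform, statement.

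\emph{Base case $n=0$.} By definition, a family $\B$ uniformly strongly thick of order $0$ is uniformly wide. Proposition \ref{lm03} then gives, for every $0<\delta <\frac{1}{54}$ and $\gamma\geq 0$, a linear upper bound on $\sup_{B\in\B}\Dv^B_\gamma(x;\delta)$, which is exactly $\preceq x^{0+1}$.

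\emph{Inductive step.} Suppose the statement holds at level $n$ and let $\B$ be uniformly strongly thick of order at most $n+1$. By (M$_{\rm uniform}$), there exist constants $\tau, \eta$, independent of $B\in\B$, such that each $B$ is a $(\tau,\eta)$--tight network with respect to a collection $\lll_B$, and the union $\lll := \bigcup_{B\in\B}\lll_B$ (with subsets endowed with the induced metrics) is a family uniformly strongly thick of order at most $n$. By the inductive hypothesis, $\sup_{L\in\lll}\Dv^L_\gamma(x;\delta)\preceq x^{n+1}$, which by Definition \ref{def:netwdiv} means $\Dv^{\lll_B}_\gamma(x;\delta)\preceq x^{n+1}$ uniformly in $B\in\B$. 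Applying Theorem \ref{thickdivergence} to each $B\in\B$ yields
\begin{equation*}
\Dv^B_\gamma(x;\delta)\preceq_C x\,\Dv^{\lll_B}_\gamma(x;\delta)\preceq x\cdot x^{n+1}=x^{(n+1)+1},
\end{equation*}
where the constant $C$ depends only on $\tau,\eta,\delta,\gamma$, hence is uniform over $B\in\B$ because $\tau$ and $\eta$ are. Taking the supremum over $B\in\B$ completes the inductive step.

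The only delicate point is verifying that all constants entering the estimate are uniform across the family; this is guaranteed by the uniformity clauses $(\upsilon\theta_1)$ and $(\upsilon\theta_2)$ in the definition of a uniformly strongly thick family, together with the explicit dependence of $C$ in Theorem \ref{thickdivergence} on only $\tau,\eta,\delta,\gamma$. The restriction $\delta<\frac{1}{54}$ is needed only to invoke Proposition \ref{lm03} at the base case; the inductive step is valid for all $\delta\in(0,1)$.
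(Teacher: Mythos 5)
Your proof is correct and follows essentially the same approach as the paper: induction on $n$, with the base case handled by Proposition~\ref{lm03} (uniformly wide families have uniformly linear divergence) and the inductive step supplied by Theorem~\ref{thickdivergence}. The extra care you took to verify that the constants in Theorem~\ref{thickdivergence} are uniform across the family, and the explicit reduction of the second statement to the first via a singleton family, fill in details the paper leaves implicit but do not change the argument.
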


\proof The statement follows by induction on $n$.  For $n=0$, since
wide spaces have linear divergence the result holds, see
Proposition~\ref{lm03}.  If the result holds for order~$n$, then
it follows immediately from Theorem~\ref{thickdivergence} that the
result holds for order $n+1$.\endproof

Corollary~\ref{thickimpliespolynom} yields upper estimates for
divergence functions of several spaces and groups. In the two
corollaries below, we record these estimates in two cases that are not in the literature. The estimates are sharp (for the upper bounds see
Section \ref{sect:Morsediv}), with one exception when the exact order of divergence is unknown.

\begin{cor}
    If $S$ is a compact oriented surface of genus $g$ and with $p$ boundary components such that $3g+p-3\geq 4$ and $(g,p) \neq (2,1)$ then the Weil-Petersson metric on the Teichm\"{u}ller space has at most
    quadratic divergence.  When $(g,p) = (2,1)$ the divergence is
    at most cubic.
\end{cor}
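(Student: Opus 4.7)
The plan is to deduce both bounds directly from Corollary~\ref{thickimpliespolynom}, which reduces the problem to locating the Weil--Petersson metric inside the strong thickness hierarchy. Specifically, it suffices to show that $\mathcal{T}(S)$ equipped with the Weil--Petersson metric is strongly thick of order at most $1$ when $3g+p-3\geq 4$ and $(g,p)\neq(2,1)$, and strongly thick of order at most $2$ when $(g,p)=(2,1)$; in both cases Corollary~\ref{thickimpliespolynom} yields divergence $\preceq x^{n+1}$ with $n=1$ (resp.\ $n=2$).

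First I would recall the product-region structure for the Weil--Petersson metric: for any multicurve $\alpha$ on $S$, the subset $Q_\alpha\subset \mathcal{T}(S)$ of surfaces with small $\alpha$-lengths is, by the distance estimates of Brock--Masur, quasi-isometric to a product of the lower-complexity Weil--Petersson Teichm\"{u}ller spaces of the components of $S\setminus\alpha$. Following Example~\ref{thickexamples}(5), based on \cite[\S12]{BehrstockDrutuMosher:thick1} and \cite{Behrstock:asymptotic}, the collection $\lll=\{Q_\alpha\}$ (for $\alpha$ ranging over a suitable family of multicurves) is a $(\tau,\eta)$--tight network in $\mathcal{T}(S)_{WP}$: the defining neighborhoods cover $\mathcal{T}(S)$, neighboring product regions share pieces of infinite diameter along common subsurface factors, and uniform path-connectedness of intersections follows from the hierarchy machinery of Masur--Minsky together with the Brock--Masur formula.

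For $3g+p-3\geq 4$ and $(g,p)\neq(2,1)$ one can choose the network so that every piece $Q_\alpha$ is quasi-isometric to a product of \emph{two} unbounded factors, hence wide (no cut-points in any asymptotic cone, by the standard product argument), and uniformly so across $\lll$. This places $\mathcal{T}(S)_{WP}$ in strong thickness order at most~$1$, and Corollary~\ref{thickimpliespolynom} gives the quadratic bound. The exceptional case $(g,p)=(2,1)$ has $\xi(S)=4$, and any multicurve on $S$ has at least one complementary component that is a one-holed torus or four-holed sphere, whose Weil--Petersson Teichm\"{u}ller space is Gromov-hyperbolic (by Brock--Farb) and therefore not wide. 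Consequently the best one obtains is that the pieces $Q_\alpha$ are themselves products involving a hyperbolic factor and a nontrivial factor, which are strongly thick of order at most~$1$ rather than of order~$0$. Thus $\mathcal{T}(S)_{WP}$ is strongly thick of order at most~$2$, and Corollary~\ref{thickimpliespolynom} yields the cubic bound.

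The only delicate step is verifying the network axioms uniformly at the lowest complexities allowed by the hypothesis; in particular, one must check that neighborhoods of product regions still intersect in infinite-diameter, uniformly path-connected sets, even when some of the complementary subsurfaces drop to complexity $1$ where the Weil--Petersson metric becomes hyperbolic. This is the main obstacle: it requires appealing to the coarse additivity of Weil--Petersson distance along subsurface projections (Brock--Masur) to realize the intersections as sub-product regions with the required properties, rather than re-deriving thickness from scratch in the low-complexity regime.
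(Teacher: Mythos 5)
Your high-level approach is exactly the paper's: invoke Corollary~\ref{thickimpliespolynom} after placing Weil--Petersson Teichm\"uller space in the strong thickness hierarchy (order~$\le 1$ in the main range, order~$\le 2$ for $(g,p)=(2,1)$). The paper does this in one line, citing \cite[Theorem~12.3]{BehrstockDrutuMosher:thick1} together with \cite[Theorem~18]{BrockMasur:WPrelhyp} for the thickness bounds; you instead attempt to reconstruct those bounds from the product-region structure.

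Two of the supporting claims in your reconstruction are imprecise, though neither affects the logical skeleton. First, Example~\ref{thickexamples}(5) in the paper asserts strong thickness of order one only for $\xi(S)\geq 6$; the cases $\xi(S)\in\{4,5\}$ (other than $(2,1)$) are not covered by it and are precisely what Brock--Masur's Theorem~18 supplies, so one cannot simply quote the ``quasi-flats of dimension two'' network for the whole range $\xi\geq 4$. Second, your justification for the $(2,1)$ case, that \emph{every} multicurve on $S_{2,1}$ leaves a complexity-one complementary component, is false: a single nonseparating curve has complement $S_{1,3}$ of complexity $3$, and various two-component multicurves also avoid complexity-one pieces. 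The genuine obstruction in the $(2,1)$ case is subtler (roughly, every pants decomposition forces a separating curve cutting off a one-holed torus, so the natural candidate pieces cannot all be made wide), and it is Brock--Masur's analysis that pins down the order-$\le 2$ bound. Since the paper's proof is ultimately just the citation, your argument lands in the right place, but the sketched thickness derivations would need the corrections above to actually stand on their own.
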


\proof It is an immediate consequence of Corollary~\ref{thickimpliespolynom} combined with \cite[Theorem~12.3]{BehrstockDrutuMosher:thick1} and \cite[Theorem~18]{BrockMasur:WPrelhyp}.\endproof

Quadratic lower bounds on the divergence of the Weil-Petersson metric
is implicit in the results of \cite{Behrstock:asymptotic}, see also Section
\ref{sect:Morsediv}.  The following question remains open, which if answered negatively would provide an interesting
quasi-isometry invariant differentiating the Weil-Petersson metric on
the Teichm\"{u}ller space of a surface of genus two with one boundary component from the other two Teichm\"{u}ller
spaces of surfaces of the same complexity (i.e., the four-punctured torus and the seven-punctured sphere).

\begin{qn}
    Does the Weil-Petersson metric on the Teichm\"{u}ller space of
    a surface of genus two with one boundary component have quadratic divergence?
\end{qn}

\begin{cor}\label{cor:lowerOut}
For $n\geq 3$ both $Aut (F_n)$ and $Out(F_n)$ have divergence at most
quadratic.
\end{cor}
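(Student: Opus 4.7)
The plan is to deduce Corollary \ref{cor:lowerOut} directly from Corollary \ref{thickimpliespolynom} combined with the fact, recorded in Examples \ref{thickexamples}(2), that $\autfn$ and $\outfn$ are strongly algebraically thick of order at most~$1$ (with respect to a family of quasi-flats of dimension~$2$).

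First I would observe that strong algebraic thickness of order at most~$1$ in the sense of Definition~\ref{dgthick} implies strong thickness of order at most~$1$ in the metric sense. Indeed, the collection of subgroups $\H$ forming a tight algebraic network gives rise, via Proposition~\ref{inetw}, to the collection of left cosets $\lll=\{gH\mid g\in G_1,\, H\in \H\}$, and this collection forms a tight network inside the ambient group. Each coset, being isometric (as a subset) to a subgroup which is wide (algebraically thick of order zero means wide), is itself wide; moreover the family of these cosets is uniformly wide, so the hypothesis $(\theta)$ in condition $(\mathbf{M}_{n+1})$ is satisfied at level $n=0$. Hence $\autfn$ and $\outfn$ are strongly thick of order at most~$1$ as metric spaces.

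Now I would apply Corollary~\ref{thickimpliespolynom} with $n=1$ to conclude that for every $0<\delta<\tfrac{1}{54}$ and every $\gamma\geq 0$,
\begin{equation*}
\Dv_\gamma(x;\delta)\preceq x^{2}\, .
\end{equation*}
Up to the equivalence $\asymp$, this bound on the divergence function is independent of the choices of $\delta$ and $\gamma$ in the appropriate ranges, by Proposition~\ref{cdiv}, and is a genuine quasi-isometry invariant of the group. This gives the desired at-most-quadratic upper bound on the divergence of both $\autfn$ and $\outfn$ for $n\geq 3$.

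The only real content to check is the first paragraph, i.e., that algebraic thickness of order at most~$1$ feeds into the metric notion of thickness of order at most~$1$; once that reduction is in place the statement is an immediate specialization of Corollary~\ref{thickimpliespolynom}. No obstacle is expected here since Proposition~\ref{inetw} is precisely tailored to produce the required tight network of cosets from a tight algebraic network of subgroups, and wideness passes from a subgroup (with its word metric) to its left cosets inside the ambient Cayley graph by left-invariance of the metric.
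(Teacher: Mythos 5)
Your proposal is correct and is essentially the paper's own (unwritten) argument: the paper states immediately before these corollaries that ``Corollary~\ref{thickimpliespolynom} yields upper estimates for divergence functions of several spaces and groups,'' and the companion Teichm\"uller-space corollary is proved exactly by this invocation. Your additional paragraph justifying that strong algebraic thickness of order at most $1$ (Example~\ref{thickexamples}(2)) yields metric strong thickness of order at most $1$ via Proposition~\ref{inetw} --- with the cosets of the finitely many wide subgroups forming a uniformly wide family --- spells out a step the paper treats as implicit, and it is sound; alternatively one could shortcut this by citing Corollary~\ref{algthickdivergence} directly together with the linear divergence of the wide quasi-flats.
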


This bound is sharp, for the lower bound see Corollary~\ref{cor:out}.

A natural question raised by Theorem~\ref{thickdivergence} and supported by all known examples, including the two above, is the
following:
\begin{qn}
    Is a group $G$ strongly algebraically thick of order $n$ if and only if it
    has polynomial divergence of degree $n+1$?
\end{qn}

\section{Higher order thickness and polynomial
divergence}\label{sec:highorderthick}

In this section we construct $CAT(0)$ groups that are strongly algebraically thick of order $n$ and with polynomial divergence of degree
$n+1$; this answers Gersten's question \cite{Gersten:divergence3} of
whether a $CAT(0)$ group can have polynomial divergence of degree 3
or greater (Macura recently provided a different construction of
examples \cite{Macura:polydiv}.)

We construct, by induction on $n$, a compact locally $CAT(0)$ space,
$M_n$, whose fundamental group $G_n =\pi_1 (M_n)$
is torsion-free. In Proposition~\ref{HOexamplethick}, we show that
$G_n$ is strongly algebraically  thick of order at most $n$.
In Proposition~\ref{HOexamplediv} we show that $M_{n}$ contains
a closed geodesic $\fg_n$ such that in the universal
cover $\widetilde{M}_n$ the lift $\widetilde{\fg}_n$ is Morse and has
divergence $\asymp x^{n+1}$.
Corollary~\ref{thickimpliespolynom} then
implies that $G_{n}$ is strongly thick of order exactly $n$.  Also, by
\cite[Theorem~3.2]{BehrstockNeumann:qigraph}, our construction of
$M_{1}$ can be chosen with fundamental group of any one of an
infinite family of pairwise non-quasi-isometric classes. In our
construction the quasi-isometry type of $M_{n}$ is an invariant of the
quasi-isometry type of $M_{n+1}$: hence our construction yields
infinitely many quasi-isometry types of groups. Thus this family of
groups will  yield Theorem~\ref{thm:introcat0}.

For $n=1$ take $M_1$ to be a CAT(0), non-geometric
graph manifold; these are easily constructed by taking a pair of
hyperbolic surfaces each with at least one boundary component, crossing each with a circle, and then gluing these two
$3$--manifolds together along a boundary torus by flipping the base and fiber directions. It
was proven by Gersten that these manifolds have quadratic divergence
\cite{Gersten:divergence3}. These groups are all
thick of order 1 \cite{BehrstockDrutuMosher:thick1},
algebraically thick examples are easy to produce by using
Example~\ref{thickexamples}(\ref{graphgroups})
and asking that the
corresponding graph of groups is simply connected (as in the explicit
example above). The remaining properties are easily verified.

Assume now that for a fixed integer $n\geq 1$ we have constructed a
compact locally $CAT(0)$ space $M_n$ with a closed geodesic $\fg_n$,
such that the lifts $\widetilde{\fg}_n$ in the universal cover have
divergence $\asymp x^{n+1}$; moreover such that the fundamental group,
$G_n=\pi_{1}(M_{n})$, is thick of order at most $n$. We
obtain  $M_{n+1}$ by gluing two isometric
copies of $M_n$ (denoted $M_n$ and $M'_n$)
by identifying the two copies of the closed geodesic $\fg_n$.

To check that $M_{n+1}$ is locally $CAT(0)$ we note that this
clearly holds in the neighborhood of each point $y$ not on $\fg_n$.
If $y\in \fg_n$ then any geodesic triangle with endpoints in $B(y,
\epsilon)$ 
is either contained
in one of the two copies of $M_n$ or two of its edges cross $\fg_n$. In
either of the cases it is easily checked that the triangle satisfies the $CAT(0)$
condition.

 It follows that $\widetilde{M}_{n+1}$ is a $CAT(0)$ space on which
 the fundamental group $G_{n+1}$ acts cocompactly.  The group
 $G_{n+1}$ is an amalgamated product of two copies of $G_n$ along the
 cyclic group $C_n$ generated by the element corresponding to $\fg_n$.
 We write this as $G_{n+1} = G_n \ast_{ C_n} G'_n$ (where $G_n$ and
 $G_{n}'$ are isomorphic).  The inductive
 hypothesis that $G_n$ is torsion-free implies that $G_{n+1}$ is
 torsion-free.  Let $T_n$ be the simplicial tree corresponding to this
 splitting.

 \begin{prop}\label{HOexamplethick}
 $G_{n+1}$ is strongly algebraically thick of order at most $n+1$.
 \end{prop}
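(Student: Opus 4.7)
The plan is to apply the inductive definition $(\mathrm{A}_{n+1})$ of strong algebraic thickness using the obvious collection $\mathcal{H}=\{G_n,G_n'\}$ of vertex subgroups coming from the amalgamated splitting. I would first verify that $G_{n+1}$ is a tight algebraic network with respect to $\mathcal{H}$, then invoke the inductive hypothesis on the factors.

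For the tight algebraic network structure, I would apply Proposition~\ref{networkgraphgroups} to the decomposition $G_{n+1}=G_n\ast_{C_n}G_n'$, viewed as the fundamental group of a graph of groups on a single edge, which is simply connected. The edge group $C_n=\langle\fg_n\rangle$ is infinite cyclic, so the hypothesis that edge groups be infinite holds. The chain required by $(\ban)$ for the pair $\{G_n,G_n'\}$ is then just the two-term chain itself, whose intersection equals $C_n$; this is infinite of infinite diameter and $M$-path connected for suitable $M$ (the cyclic subgroup generated by the periodic geodesic $\fg_n$).

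The substantive step is the quasi-convexity of the vertex subgroups in $G_{n+1}$, which I expect to be the main technical obstacle. My approach would be to use the $CAT(0)$ geometry already established: since $M_{n+1}$ is obtained by gluing two copies of the locally $CAT(0)$ space $M_n$ along the locally convex subspace $\fg_n$, standard $CAT(0)$ gluing theory (as in Bridson--Haefliger) implies that each of $M_n,M_n'$ sits locally convexly in $M_{n+1}$. Lifting to the universal cover, each translate of $\widetilde{M}_n$ is a convex subspace of $\widetilde{M}_{n+1}$. Since $G_n$ acts geometrically on the convex subspace $\widetilde{M}_n\subset\widetilde{M}_{n+1}$, it is quasi-convex in $G_{n+1}$, and the same holds for $G_n'$. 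This is exactly the input Proposition~\ref{networkgraphgroups} requires.

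Finally, the inductive hypothesis furnishes that $G_n$, and hence its isomorphic copy $G_n'$, is strongly algebraically thick of order at most $n$. Combined with the tight algebraic network structure established above, the inductive definition $(\mathrm{A}_{n+1})$ yields directly that $G_{n+1}$ is strongly algebraically thick of order at most $n+1$. Everything except the convexity verification is a routine application of earlier results in the paper; the hard part is making sure the gluing genuinely places each copy of $\widetilde{M}_n$ as a convex subset of $\widetilde{M}_{n+1}$, but this follows from the standard fact that in a locally $CAT(0)$ space formed by gluing along a locally convex subspace the pieces remain locally convex.
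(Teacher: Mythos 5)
Your overall reduction matches the paper's: both arguments note that since $G_n$ and $G_n'$ are inductively thick of order at most $n$ and intersect in the infinite cyclic group $C_n$, everything comes down to establishing the quasi-convexity of $G_n$ and $G_n'$ in $G_{n+1}$, and both translate this into showing that the lifts $\widetilde{M}_n$, $\widetilde{M}_n'$ (equivalently, their $G_{n+1}$-translates) sit as convex subspaces of $\widetilde{M}_{n+1}$. Where you diverge is in how you prove that convexity. You invoke the Bridson--Haefliger gluing theory as a black box: glue locally CAT(0) spaces along a common locally convex subspace, conclude the pieces remain locally convex in the glue, then lift to the universal cover (using that complete connected locally convex subsets of a CAT(0) space are convex) to get global convexity. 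The paper instead gives a self-contained, bare-hands argument: it proves by induction on the number $k$ of separating geodesics in $G_n\widetilde{\fg}_n$ crossed that any geodesic with endpoints on $\widetilde{\fg}_n$ must lie in $\widetilde{\fg}_n$, and from this deduces that the translates of $\widetilde{M}_n$ are totally geodesic. Your route is slicker and makes the role of CAT(0) gluing explicit, at the cost of leaning on a result (pieces stay locally convex after gluing along a locally, not globally, convex subspace, plus the local-to-global convexity lift) that, while true, is not quite a quoted theorem in Bridson--Haefliger and would need to be assembled carefully; if you go this way you should be precise about which version of the gluing theorem you are applying, since the Basic Gluing Theorem as usually stated requires the identified subspace to be globally convex in each piece, whereas here $\fg_n$ is only a closed local geodesic. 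The paper's induction-on-crossings argument avoids that assembly by exploiting directly the tree-like pattern of the separating translates of $\widetilde{\fg}_n$. Either way the conclusion is the same, and your use of Proposition~\ref{networkgraphgroups} and of the inductive hypothesis is exactly what the paper does implicitly.
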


\proof Since each of $G_{n}$ and $G_{n}'$ are thick of order at most $n$ and
intersect in an infinite cyclic group, it only remains to prove that $G_n$ and $G_n'$ are both quasi-convex in
$G_{n+1}$.  This is equivalent to proving that in
$\widetilde{M}_{n+1}$ the lifts $\widetilde{M}_n$ and
$\widetilde{M}_n'$ of $M_n$ and $M_n'$ respectively are quasi-convex.
Note that $\widetilde{M}_{n+1}$ is obtained by gluing all the
translates $G_n \widetilde{M}_n$ and $G_n \widetilde{M}_n'$ along
geodesics $G_n \widetilde{\fg}_n$.  In particular, the geodesics in
$G_n \widetilde{\fg}_n$ separate $\widetilde{M}_{n+1}$.

 We first prove that $\widetilde{\fg}_n$ is totally geodesic.  We prove by
 induction on $k$ that an arbitrary geodesic $[x,y]$ joining two
 points $x,y\in \widetilde{\fg}_n$ and crossing at most $k$ geodesics in $G_n
 \widetilde{\fg}_n$ must be contained in $\widetilde{\fg}_n$.  Here and in what follows
 when we say that a subset $A$ of $\widetilde{M}_{n+1}$ \emph{crosses} a geodesic
 $\overline{\fg}_n$ in $G_n \widetilde{\fg}_n \, $ we mean that $A$ intersects
 at least two connected components of $\widetilde{M}_{n+1}\setminus \overline{\fg}_n\, $.

 For $k=0$ the statement follows from the fact that $\widetilde{\fg}_n$ is totally geodesic
 both in $\widetilde{M}_n$ and in $\widetilde{M}_n'$ and that, since
 the metric on $\widetilde{M}_{n+1}$ locally coincides with the metric
 on $\widetilde{M}_n$ (respectively $\widetilde{M}_n'$) the length of
 a path contained in $\widetilde{M}_n$ (respectively
 $\widetilde{M}_n'$) is the same in that space as in
 $\widetilde{M}_{n+1}$.  Now assume that the statement is true for
 all integers less than $k$
 and consider an arbitrary geodesic $[x,y]$ with endpoints $x,y\in
 \widetilde{\fg}_n$ and crossing at most $k$ geodesics in $G_n \widetilde{\fg}_n$.
 Let
 $\fg_n'$ be a geodesic crossed by $[x,y]$ such that the corresponding
 edge in $T_n$ is at maximal distance from the edge corresponding to
 $\widetilde{\fg}_n$.  It follows that there exists $[a,b]$ subgeodesic of
 $[x,y]$ with endpoints on the geodesic $\fg_n'$ and not crossing any
 other geodesic in $G_n \widetilde{\fg}_n$.  Then it must be entirely
 contained in $\fg_n'$ according to the initial step for $k=0$.  Hence
 the geodesic $\fg_n'$ is not crossed and we can use the inductive
 hypothesis.

 We have thus proved that all geodesics in $G_n \widetilde{\fg}_n$ are totally
 geodesic in $\widetilde{M}_{n+1}\, ,$ in particular they are geodesics in
 $\widetilde{M}_{n+1}\, .$ From this it immediately follows that each of the
 subspaces in the orbits $G_n\widetilde{M}_n$ and
 $G_n\widetilde{M}_n'$ is totally geodesic.  \endproof

 \begin{prop}\label{HOexamplediv}
There exists a closed geodesic $\fg_{n+1}$ in $M_{n+1}$ such that in
the universal cover $\widetilde{M}_{n+1}$ the lift
$\widetilde{\fg}_{n+1}$ is Morse and has divergence $\asymp x^{n+2}$.
 \end{prop}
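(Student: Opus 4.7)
The plan is to build $\widetilde{\fg}_{n+1}$ as the $CAT(0)$ axis of the loxodromic element $u=hh'\in G_{n+1}$ with $h\in G_n\setminus C_n$ and $h'\in G'_n\setminus C_n$. Such a $u$ is loxodromic on the Bass--Serre tree $T_n$ of the splitting $G_{n+1}=G_n\ast_{C_n}G'_n$ with translation length $2$, hence admits a bi-infinite $CAT(0)$ axis $\widetilde{\fg}_{n+1}$ in $\widetilde{M}_{n+1}$ whose projection to $M_{n+1}$ is the required closed geodesic. To obtain the Morse property I would first verify that the action of $G_{n+1}$ on $T_n$ is acylindrical: by the inductive hypothesis $c_n$ is a Morse element of the $CAT(0)$ group $G_n$, so it does not bound a flat half-plane (Proposition~\ref{lhalf}), and the Flat Torus Theorem then gives that its centralizer is virtually $\la c_n\ra$, forcing distinct conjugates of $C_n$ to intersect in finite subgroups inside each vertex group. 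The cited result from \cite{DrutuMozesSapir} — loxodromic elements of acylindrical actions on simplicial trees are Morse — then shows that $\widetilde{\fg}_{n+1}$ is Morse.

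For the upper bound $\Dv^{\widetilde{\fg}_{n+1}}(x)\preceq x^{n+2}$ I would invoke Theorem~\ref{thickdivergence}. By Proposition~\ref{networkgraphgroups}, $\widetilde{M}_{n+1}$ is a tight network with respect to the collection $\lll=\{g\widetilde{M}_n,\;g\widetilde{M}'_n:g\in G_{n+1}\}$, and each element of $\lll$ has divergence $\asymp x^{n+1}$ by induction; hence $\Dv^{\widetilde{M}_{n+1}}(x)\preceq x\cdot x^{n+1}=x^{n+2}$, which bounds $\Dv^{\widetilde{\fg}_{n+1}}$ from above.

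The matching lower bound is the main obstacle. For $r$ large, set $A=\widetilde{\fg}_{n+1}(-r)$, $B=\widetilde{\fg}_{n+1}(r)$, $C=\widetilde{\fg}_{n+1}(0)$, and let $\alpha$ be any path from $A$ to $B$ avoiding $B(C,\delta r)$. Enumerate the translates of $\widetilde{\fg}_n$ crossed by $\widetilde{\fg}_{n+1}|_{[-r,r]}$ as $\gamma_{-k},\ldots,\gamma_k$ with $k=\Theta(r)$, and write $p_j=\gamma_j\cap\widetilde{\fg}_{n+1}$, $d_j=\dist(p_j,C)$. Since each $\gamma_j$ corresponds to a separating edge of $T_n$, the path $\alpha$ must cross $\gamma_j$ at some point $q_j$, and whenever $d_j<\delta r$ the ball $B(p_j,\delta r-d_j)$ lies inside the forbidden region. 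The plan is to argue that inside each block $B_j\cong\widetilde{M}_n$ the subpath of $\alpha$ joining $q_j$ to $q_{j+1}$ has length at least $(\delta r-d_j)^{n+1}$, by combining the Morse (contracting-projection) property of $\gamma_j\cong\widetilde{\fg}_n$ inside $\widetilde{M}_n$ with the inductive divergence estimate $\Dv^{\widetilde{\fg}_n}\asymp x^{n+1}$, and then to sum the contributions:
\[
\mathrm{length}(\alpha)\;\succeq\;\sum_{|j|\le \delta r/L}(\delta r-|j|L)^{n+1}\;\asymp\;r^{n+2},
\]
where $L$ is the per-block length of the axis. The hardest step is this per-block lower bound: one must leverage the avoidance of $B(p_j,\delta r-d_j)$ together with the contraction onto $\gamma_j$ to realize the \emph{full} divergence cost of $\widetilde{\fg}_n$ at scale $\delta r-d_j$ inside $\widetilde{M}_n$, rather than the merely linear lower bound of size $\delta r-d_j$ that one gets from the triangle inequality alone.
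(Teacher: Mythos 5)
Your overall strategy is the same as the paper's: take a loxodromic element of the amalgam $G_{n+1}=G_n\ast_{C_n}G'_n$, use acylindricity of the Bass--Serre action to get the Morse property, appeal to the tight-network structure (Theorem~\ref{thickdivergence}) for the upper bound $\preceq x^{n+2}$, and obtain the lower bound by counting the $\asymp x$ separating translates of $\widetilde{\fg}_n$ that any avoiding path must cross and arguing that each crossing is expensive. The Morse step is handled a bit differently (the paper directly shows conjugate edge stabilizers intersect trivially via a distortion/centralizer argument, rather than citing the Flat Torus Theorem as you do), and the paper builds a genuine $C$--invariant $CAT(0)$ axis via the parallel-set decomposition rather than taking a fixed product $u=hh'$, but these are variations on the same idea and are fine.

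The gap is exactly where you flag it: the ``per-block'' lower bound. Your sketch --- avoid $B(p_j,\delta r - d_j)$ and use contraction onto $\gamma_j$ to realize the $(n+1)$st-power divergence cost --- does not go through as stated, for two reasons. First, the subpath of $\alpha$ between consecutive crossings $q_j$ and $q_{j+1}$ has its two endpoints on \emph{different} walls $\gamma_j$ and $\gamma_{j+1}$, whereas the divergence function of $\gamma_j$ concerns paths with both endpoints on $\gamma_j$ avoiding a ball centered on $\gamma_j$; contraction onto $\gamma_j$ alone does not convert one into the other. Second, and more importantly, nothing in your setup yet forces the avoiding path to be far from $\gamma_j$ itself --- it could in principle hug $\gamma_j$ at distance $o(x)$, in which case you cannot see the divergence of $\gamma_j$ at scale comparable to $x$. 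The paper closes both holes: it first runs a Kapovich--Leeb flat-strip argument to show that the crossing point $\alpha'\in\gamma_{j+1}$ has nearest-point projection $\beta$ on $\gamma_j$ at distance $\geq\lambda x$ (otherwise one produces a flat strip bounded by $\gamma_j$, contradicting that $\gamma_j$ is Morse), and then uses $CAT(0)$ angle comparisons at $\gamma_j$ to assemble an auxiliary path $\fc'\cup[\alpha',\beta']\cup[\beta',\beta]$ from $\alpha\in\gamma_j$ back to $\beta\in\gamma_j$ which stays in (a neighborhood of) the block and avoids a ball $B(\mu,\tfrac{\epsilon}{10}x)$ centered at a point $\mu\in\gamma_j$ between $\alpha$ and $\beta$. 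Only then does the inductive divergence bound $\Dv^{\gamma_j}\asymp x^{n+1}$ apply and yield a \emph{uniform} per-block cost $\succeq x^{n+1}$ (stronger than your proposed $(\delta r-d_j)^{n+1}$), and summing over the $\asymp x$ crossings gives $\succeq x^{n+2}$. In short: your outline is correct, but the hard geometric work --- the flat-strip lower bound on $\dist(\alpha',\gamma_j)$ and the construction of the auxiliary same-wall path --- is exactly the content you would still have to supply.
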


 \proof The group $G_{n+1}$ acts on the tree $T_n$ with quotient an
 edge; therefore there exists a loxodromic element $\gamma\in G_{n+1}$.
 The action is acylindrical, moreover the stabilizers of two distinct
 edges have trivial intersection.  Indeed consider two edges $e$ and
 $he$, with $h\in G_{n+1}\, $.  Assume that their stabilizers $C_n$
 and $hC_n h\iv$ intersect non-trivially.  Then they intersect in some
 finite index cyclic subgroup $C_n'\, $ of $C_n$.  In particular there
 exist two integers $r,s$ such that if $\gamma_n$ is the generator of
 $C_n$ then $\gamma_n^r= h \gamma_n^s h\iv \, $.  If $r\neq \pm s$
 then it can be easily proved that $C_n$ must be distorted in
 $G_{n+1}$, contradicting the previous lemma.  It follows that $r= \pm
 s$, and up to replacing $h$ by $h^2$ we may assume that $r=s \, $.
 It follows that $h$ is an element of infinite order in the center of
 $C_n'$, and this contradicts the fact that $C_n$ (and hence $C_n'$)
 is a Morse quasi-geodesic.

 By 
 Theorem~\cite[Theorem 4.1]{DrutuMozesSapir},
 since the cyclic subgroup
 $C=\la \gamma \ra$ acts acylindrically by
 isometries on a simplicial tree it is a Morse quasi-geodesic.  Consider a point
 $x\in \widetilde{M}_{n+1}\, $.  Since the map $G_{n+1} \to
 \widetilde{M}_{n+1}$ defined by $g\mapsto gx$ is a quasi-isometry, it
 follows that $C x$ is a Morse quasi-geodesic.  The sequence of
 geodesics $[\gamma^{-n}x , \gamma^n x]$ is contained in $\nbhd_M (Cx)
 $ for a fixed $M$, hence it has a subsequence converging to a
 bi-infinite geodesic $\pgot$ entirely contained in $\nbhd_M (Cx)\, .$
 For every $k\in \Z \, ,$ $\gamma^k \pgot$ is also inside $\nbhd_M
 (Cx)\, .$ It follows that the two bi-infinite geodesics $\pgot$ and
 $\gamma^k \pgot $ are at finite Hausdorff distance.  Since the
 function $t\mapsto \dist (\pgot (t) , \gamma^k \pgot (t))$ is convex
 positive and bounded it follows that it is constant, hence the two
 geodesics are parallel.  According to \cite{Bridson-Haefliger}, the
 set of bi-infinite geodesics parallel to $\pgot $ compose a set
 isometric to $\pgot \times K\, ,$ where $K$ is a convex subset.
 Since $C$ is a Morse quasi-geodesic, hence $\pgot$ is a Morse
 geodesic, it follows that $K$ must be bounded.  By possibly
 replacing it with a smaller set,  we may assume that $\pgot \times K$
 is invariant with respect to $C.$ If $b$ denotes the barycenter of
 $K$, then $\pgot \times \{ b\}$ is invariant with respect to $C\, .$ Take $\widetilde{\fg}_{n+1} = \pgot \times \{ b\}$ and $\fg_{n+1} =
 \widetilde{\fg}_{n+1}/C \, .$

 The only thing remaining to be proved is that the divergence of
 $\widetilde{\fg}_{n+1}$ is equivalent to $x^{n+2}\, $.  Consider a
 shortest path $\fc $ joining $\widetilde{\fg}_{n+1} (-x)$ and
 $\widetilde{\fg}_{n+1} (x)$ outside the ball $\Bl
 (\widetilde{\fg}_{n+1} (0), \delta x - \kappa )\, $.  In particular
 this path is at distance at least $\frac{\delta }{3} x$ from the
 geodesic $\widetilde{\fg}_{n+1}$ restricted to $\left[-\frac{\delta }{3} x\, ,\, \frac{\delta
 }{3} x \right]$ and it has to cross the same separating geodesics in
 $G_{n+1} \widetilde{\fg}_n$ and no more (if it is shortest).

 There exists a constant $M$ such that two separating geodesics
 crossed consecutively by $\widetilde{\fg}_{n+1}$ are at distance at
 most $M$ and such that the pair of points realizing the distance
 between two such consecutive geodesics is inside
 $\nbhd_M(\widetilde{\fg}_{n+1})$.  It follows that the number of
 separating geodesics in $G_{n+1} \widetilde{\fg}_n$ crossed by  $\widetilde{\fg}_{n+1}$ restricted to $\left[-\frac{\delta }{3} x\, ,\, \frac{\delta }{3} x
 \right]$ is $\asymp \frac{2\delta }{3} x\, $.  Let $\alpha$ and
 $\alpha'$ be the two intersection points of $\fc $ with two geodesics
 in $G_{n+1} \widetilde{\fg}_n$ crossed consecutively, $\fg$ and $\fg'$, and let
 $\fc'$ be the subpath of $\fc$ of endpoints $\alpha$ and $\alpha'$.
 Let $[a,b]$ be a geodesic which is the shortest path joining $\fg$
 and $\fg' \, $.  For $\delta'$ small enough we may assume that $\fc'$
 is outside $\nbhd_{\delta' x} ([a,b])$.  Let $\beta$ be the nearest
 point projection of $\alpha'$ onto $\fg \, $.  If $\dist
 (\alpha',\beta) =o(x)$ then an argument as in \cite[Proposition
 3.3]{KapovichLeeb:3manifolds} allows to find a flat strip bounded by
 $\fg$, a contradiction.  It follows that $\dist (\alpha',\beta) \geq
 \lambda x$ for some constant $\lambda >0$ independent of the point
 $\alpha'\, $.

 Both the geodesics $[a,b]$ and $[\beta,\alpha']$ make an angle of at
 least $\frac{\pi}{2}$ with $\fg$, since one of the endpoints of each
 is the nearest point projection on $\fg$ of the other endpoint.  This
 and the $CAT(0)$--property implies that $[\alpha ',\beta ] \cup [\beta
 ,a]\cup [a,b]$ is an $(L,0)$--quasi-geodesic, for large enough $L$
 (depending also on $\lambda$).  It follows that $[\alpha ',\beta ]
 \cup [\beta ,a]\cup [a,b]$ is contained in an $M$--neighborhood of
 $\fg' \, $.  In particular there exists $\beta'$ on $\fg'$ at
 distance $\asymp \lambda x$ such that $[\alpha', \beta' ] \subseteq
 \fg'$ has nearest point projection on $\fg$ at distance $O(1)$ from
 $\beta \, $.  Also $\beta$ is at distance $ O(1)$ from $a$, otherwise
 we would obtain again that some finite index subgroup of $C$ has a
 non-trivial element $h\not\in C$ in its center.  We choose a point
 $\mu $ on $\fg$ between $\alpha$ and $\beta$, at distance $\epsilon x$ from $\beta $ and a ball
 $\Bl (\mu , \frac{\epsilon}{10} x) \subset \nbhd_{\delta' x} ([a,b])$
 with $\varepsilon$ small enough. The ball $\Bl (\mu , \frac{\epsilon}{10} x)$ does not intersect the path $\fc'
 \cup [\alpha' , \beta ' ]\cup [\beta' , \beta ]\, $.  This and the
 fact that $\fg$ has divergence $\asymp x^{n+1}$ implies that the
 length of $\fc' \succeq x^{n+1} \, $.

 We conclude that the length of the path $\fc \succeq x^{n+2} \, $.\endproof

 \section{Morse quasi-geodesics and divergence}\label{sect:Morsediv}

 In this section we improve the result in Proposition \ref{lhalf} and generalize it to the utmost in the CAT(0) setting in Theorem \ref{cor2}. We also show that the quadratic lower bound on divergence occurs for many concrete examples of Morse elements in groups. This together with the estimate on divergence coming from the structure of thick metric space yields a divergence precisely quadratic for several groups and spaces (Corollary \ref{cor:out}).

\begin{lem}\label{lem:proj1}
Let $\q$ be a Morse quasi-geodesic in a metric space $(X, \dist )$.
Then for every $\lambda\in (0,1)$ and for every $M>0$ there exist $D
>0$ such that the following holds.  If $\fc$ is a sub-quasi-geodesic of
$\q$, $x$ and $y$ are points in $X$, $x'$ and $y'$ are points on $\fc$
minimizing the distance to $x$, respectively $y$, and $\dist (x',y')
\geq D$ while $\dist (x,x' )+ \dist (y',y)\leq M \dist (x',y')$ then
$\dist (x,y) \geq \lambda [\dist (x,x' )+ \dist (x' , y' )+ \dist
(y',y)]\, .$
\end{lem}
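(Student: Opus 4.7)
The plan is to show that the concatenated path
\[
\gamma \;:=\; [x,x'] \,\ast\, \fc|_{[x',y']} \,\ast\, [y',y]
\]
is a quasi-geodesic from $x$ to $y$, with quasi-geodesic constants depending only on $M$ and on the Morse and quasi-geodesic constants of $\q$. Granting this, the standard quasi-geodesic inequality yields
\[
\dist(x,y) \;\geq\; \tfrac{1}{L}\,\mathrm{length}(\gamma) - C \;\geq\; \tfrac{1}{L'}\,P - C,
\]
where $P := \dist(x,x') + \dist(x',y') + \dist(y',y)$ and we have used that $\fc$ is itself a quasi-geodesic, so $\mathrm{length}(\fc|_{[x',y']}) \asymp \dist(x',y')$. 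Choosing $D$ large enough that $\dist(x',y') \geq D$ forces $\tfrac{1}{L'}P - C \geq \lambda P$ gives the desired conclusion.

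The first step in verifying that $\gamma$ is a quasi-geodesic is the observation, proved by a one-line triangle inequality, that the minimizing property of $x'$ for $x$ transfers to every interior point of $[x,x']$: for any $p \in [x,x']$ and any $z \in \fc$,
\[
\dist(p,z) \;\geq\; \dist(x,z) - \dist(x,p) \;\geq\; \dist(x,x') - \dist(x,p) \;=\; \dist(p,x'),
\]
so $x'$ remains a closest point of $\fc$ to $p$; symmetrically for $[y',y]$. This observation disposes of the subpath cases in which the two chosen points either lie on the same segment of $\gamma$ or on two adjacent segments: a short computation combines the projection inequality with the triangle inequality to bound $\dist(p,q)$ below by a uniform fraction of the arclength from $p$ to $q$.

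The essential case is $p \in [x,x']$ and $q \in [y',y]$, where the arclength from $p$ to $q$ along $\gamma$ comprises $\dist(p,x')$, the middle quasi-geodesic piece, and $\dist(q,y')$. I would argue by contradiction: if $\dist(p,q)$ were much smaller than $\dist(x',y')$, the curve $\sigma := [x',p] \ast [p,q] \ast [q,y']$ would be a path from $x'$ to $y'$ of total length at most $(M+1)\dist(x',y')$. A length-controlled path joining two points on the Morse quasi-geodesic $\q$ must remain within a uniformly bounded neighborhood of the sub-quasi-geodesic of $\q$ between its endpoints; combined with the closest-projection property of $x'$ and $y'$, this forces $\dist(p,q)$ to be comparable to $\dist(x',y')$, a contradiction.

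The main obstacle in a direct proof is exactly justifying this last step, namely converting a length-bounded shortcut into a genuine quasi-geodesic whose constants are controlled only in terms of $M$, so that the Morse property is applicable. A conceptually cleaner (though non-quantitative) alternative is an asymptotic-cone argument: a sequence of putative counterexamples rescaled by $d_n := \dist(x_n',y_n') \to \infty$ and based at $x_n'$ converges to a quadruple $\bar x,\bar x',\bar y',\bar y$ in an asymptotic cone of $X$; by Proposition~\ref{lm02}(iii), the limit of $\q$ is a transversal geodesic in a tree-graded structure on the cone, in which projections of $\bar x$ and $\bar y$ satisfy the exact tree-identity
\[
\dist(\bar x,\bar y) \;=\; \dist(\bar x,\bar x') + \dist(\bar x',\bar y') + \dist(\bar y',\bar y),
\]
contradicting the scaled failure hypothesis $\dist(\bar x,\bar y) \leq \lambda \bar P$ with $\lambda < 1$.
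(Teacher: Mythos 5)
Your second, asymptotic-cone sketch is essentially the paper's argument.  The paper argues by contradiction: fixing a failing sequence, it passes to $\con(X,(x_n),(\delta_n))$ with $\delta_n=\dist(x_n,x_n')+\dist(x_n',y_n')+\dist(y_n',y_n)$ (comparable to your $d_n$ within the factor $M+1$, so the choice of rescaling is immaterial), notes that $\q_\om=\ulim(\q)$ is a biLipschitz path in a transversal tree, and shows $[x_\om,x_\om']\cup[x_\om',y_\om']\cup[y_\om',y_\om]$ is a geodesic, giving $\dist(x_\om,y_\om)=1>\lambda$ and a contradiction.  Two caveats on your version, though.  First, your citation of Proposition~\ref{lm02}(iii) does not quite fit: that proposition is about a \emph{periodic} geodesic of superlinear divergence, whereas $\q$ here is an arbitrary Morse quasi-geodesic.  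The fact you actually need — that the ultralimit of a Morse quasi-geodesic lies in a transversal tree of some tree-graded structure — is true and standard, but it comes from the Morse property together with the tree-graded machinery of \cite{DrutuSapir:TreeGraded} (e.g.\ Lemma~\ref{cutting}), not from that proposition.  Second, the ``exact tree-identity'' requires one more step: the limiting geodesics $[x_\om,x_\om']$, $[y_\om,y_\om']$ generally leave the transversal tree, so one cannot literally take a tree projection.  The paper handles this by first observing that the pieces $[x_\om'',x_\om']\cup[x_\om',y_\om']\cup[y_\om',y_\om'']$ \emph{inside} the transversal tree concatenate to a geodesic (here the positivity of $\dist(x_\om',y_\om')$, i.e.\ $D_n\to\infty$, is essential), and then invoking \cite[Remark~2.27 and Lemma~2.28]{DrutuSapir:TreeGraded} to upgrade the full concatenation to a geodesic.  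Your sketch should make this step explicit.

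Your first, direct approach has the gap you already sense, and it is fatal rather than merely technical.  Even granting that $\gamma:=[x,x']\ast\fc|_{[x',y']}\ast[y',y]$ is an $(L',C')$-quasi-geodesic with constants depending only on $M$ and the Morse and quasi-geodesic data of $\q$, the resulting lower bound has the form $\dist(x,y)\geq P/L'' - C''$ for some $L''\geq 1$ independent of $\lambda$.  This yields $\dist(x,y)\geq\lambda P$ only when $\lambda<1/L''$; for $\lambda$ close to $1$ no choice of $D$ helps.  Since the statement is for \emph{every} $\lambda\in(0,1)$, the asymptotic-cone route is not just cleaner but necessary for the full strength of the lemma.
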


\proof Assume for a contradiction that there exists $\lambda \in
(0,1)$ and $M>0$ such that for every $D_n >0 $ there exist $\fc_n$
sub-quasi-geodesic of $\q$, $x_n,y_n\in X$ and $x_n', y_n'$ points on
$\fc_n$ minimizing the distance to $x_n$, respectively $y_n$ such that
$\dist (x_n',y_n') \geq D_n$, $\dist (x_n,x_n' )+ \dist (y_n',y_n)\leq
M \dist (x_n',y_n')$ while $\dist (x_n,y_n) \leq \lambda [\dist
(x_n,x_n' )+ \dist (x_n' , y_n' )+ \dist (y_n',y_n)]\, .$

We denote $\dist (x_n,x_n' )+ \dist (x_n' , y_n' )+ \dist (y_n',y_n)$
by $\delta_n$.  Note that by hypothesis $$
\dist (x_n' , y_n' ) \leq \delta_n \leq (M+1) \dist (x_n' , y_n' )\, .
$$

Consider $\coneomega (X, (x_n); (\delta_n))\, $.  Since $\q$ is a
Morse quasi-geodesic $\q_\om = \ulim (\q )$ is a biLipschitz path in
a transversal tree, $\fc_\om = \ulim (\fc_n )$ is a subpath in it, and
$x_\om' = \ulim (x_n')$ and $y_\om' = \ulim (y_n')$ are two points on
it.  Let $x_\om = \ulim (x_n)$ and $y_\om = \ulim (y_n)$.  Any
geodesics $[x_\om , x_\om']$ and $[y_\om , y_\om']$ intersect
$\fc_\om$ only in an endpoint.  Let $[x_\om'', x_\om']$ be the
intersection of the geodesic $[x_\om , x_\om']$ with the transversal
tree containing $\q_\om$; let $[y_\om'', y_\om']$ be defined likewise.
The union $[x_\om'', x_\om'] \cup [x_\om', y_\om']\cup [y_\om',
y_\om'']$ is a geodesic in a transversal tree, since it is a
concatenation of three arcs such that $[x_\om', y_\om']$ does not
reduce to a point, and it intersects its predecessor in $x_\om'$ and
its successor in $y_\om'$.

This, the fact that transversal trees can always be added to the list
of pieces in a tree-graded space \cite[Remark
2.27]{DrutuSapir:TreeGraded}, and Lemma 2.28 in
\cite{DrutuSapir:TreeGraded} imply that $[x_\om, x_\om'] \cup [x_\om',
y_\om']\cup [y_\om', y_\om]$ is a geodesic.  In particular $\dist (x_n
, y_n ) = \delta_n + o(\delta _n )$.  This contradicts the fact that
$\dist (x_n , y_n ) \leq \lambda \delta_n\, .$\endproof

\begin{lem}\label{lem:proj2}
Let $\q$ be a Morse quasi-geodesic in a $CAT(0)$ metric space $(X,
\dist )$.  Then for every $\lambda\in (0,1)$ there exists $D >0$ such
that the following holds.  If $\fc$ is a sub-quasi-geodesic of $\q$,
$x$ and $y$ are points in $X$, $x'$ and $y'$ are points on $\fc$
minimizing the distance to $x$, respectively $y$, and $\dist (x',y')
\geq D$ then $\dist (x,y) \geq \lambda [\dist (x,x' )+ \dist (x' , y'
)+ \dist (y',y)]\, .$
\end{lem}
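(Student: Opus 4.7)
\emph{Proof plan.}
The strategy is to combine Lemma~\ref{lem:proj1} with a CAT(0)-specific argument exploiting the strongly contracting character of Morse quasi-geodesics in CAT(0) spaces.

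Given $\lambda \in (0, 1)$, fix a large constant $M = M(\lambda)$ to be specified. There are two cases to consider, according to whether the distances from $x$ and $y$ to $\fc$ are controlled by $\dist(x', y')$. If $\dist(x, x') + \dist(y, y') \leq M \dist(x', y')$, then Lemma~\ref{lem:proj1} applies directly with parameters $\lambda$ and $M$ and yields the conclusion provided $\dist(x', y') \geq D_{1}(\lambda, M)$ for an appropriate threshold $D_{1}$.

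In the complementary case $\dist(x, x') + \dist(y, y') > M \dist(x', y')$, the plan is to establish a sharper additive estimate
\[
\dist(x, y) \;\geq\; \dist(x, x') + \dist(x', y') + \dist(y, y') - K
\]
for a constant $K$ depending only on the quasi-geodesic and Morse constants of $\q$. Given this, taking $D \geq K/(1 - \lambda)$ forces $\dist(x, y) \geq \lambda \delta$, where $\delta$ denotes the right-hand side of the target inequality; the desired bound then follows in both cases by taking the maximum of the two thresholds.

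The key ingredient for the sharper estimate is the bounded geodesic image property, a consequence of the strongly contracting nature of nearest-point projection onto a Morse quasi-geodesic in a CAT(0) space: there exist constants $R, C$ so that whenever a geodesic $[p, q]$ in $X$ avoids $\nn_{R}(\fc)$, any nearest points to $p$ and $q$ on $\fc$ lie within $C$ of each other. Choosing $D > C$, the hypothesis $\dist(x', y') \geq D$ forces the geodesic $[x, y]$ to enter $\nn_{R}(\fc)$; let $z_{1}$ and $z_{2}$ denote its first and last points in $\nn_{R}(\fc)$. Applying bounded geodesic image to the subgeodesics $[x, z_{1}]$ and $[z_{2}, y]$, and combining with $z_{i} \in \nn_{R}(\fc)$, yields $\dist(z_{1}, x') \leq R + C$ and $\dist(z_{2}, y') \leq R + C$. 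Decomposing $\dist(x, y) = \dist(x, z_{1}) + \dist(z_{1}, z_{2}) + \dist(z_{2}, y)$ along the geodesic and applying the triangle inequality within each piece then produces the additive estimate with $K = 4(R + C)$.

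The main obstacle is verifying the bounded geodesic image property for Morse quasi-geodesics in CAT(0) spaces. This is a standard correspondence, essentially due to Charney--Sultan and implicit in the framework of \cite{DrutuMozesSapir}, but some care is needed in passing between the parametrized quasi-geodesic $\q$ and its image, and in handling the fact that nearest-point projection onto the non-convex set $\fc$ is defined only up to a bounded ambiguity.
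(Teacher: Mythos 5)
Your plan is correct \emph{modulo} the bounded geodesic image (BGI) step, and you correctly flag that step as the main obstacle. The trouble is that the BGI property for Morse quasi-geodesics in a $CAT(0)$ space is not a minor technicality but is the crux: it is a theorem whose proof is roughly as hard as the lemma you are being asked to prove, and it is not available in the paper's toolkit at this point. In fact, in this paper the contracting-type statement (Lemma~\ref{lem1}) is \emph{derived from} Lemmas~\ref{lem:proj1} and~\ref{lem:proj2}, not the other way around, so asserting a BGI property as an input risks circularity. The reference to Charney--Sultan is also anachronistic for this paper, and the claim that BGI is ``implicit in \cite{DrutuMozesSapir}'' overstates what that reference establishes: that paper shows that Morse quasi-geodesics limit to transversal trees in asymptotic cones, which is not the same as a BGI bound. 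Note also that the $D_0$-contracting property of Lemma~\ref{lem1} does not by itself upgrade to BGI: a naive subdivision of a geodesic lying outside $\nn_R(\fc)$ into steps of length $\approx R$ and summing the per-step contraction constant gives a projection bound proportional to $\dist(x,y)/R$, which is unbounded; getting BGI requires genuinely more (the ``strongly contracting'' statement with bound independent of the ball radius, or an argument showing the geodesic must drift away from $\fc$), and that is exactly the content being proved.

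Two further remarks on the structure. First, if one did have BGI, your additive estimate $\dist(x,y) \ge \dist(x,x') + \dist(x',y') + \dist(y',y) - K$ holds in \emph{all} cases, not just when $\dist(x,x')+\dist(y,y') > M\dist(x',y')$; the two-case split via Lemma~\ref{lem:proj1} is then redundant. Second, the paper's own proof takes a different route entirely: it argues by contradiction with sequences $x_n, y_n, \fc_n$ scaled by $\dist(x_n',y_n')$ and splits into two cases depending on whether $\delta(x_n,y_n)/\dist(x_n',y_n')$ stays bounded along the ultrafilter. The bounded case passes to an asymptotic cone where the Morse property puts $\fc_\omega$ in a transversal tree and tree-gradedness forces $[x_\omega,x_\omega']\cup[x_\omega',y_\omega']\cup[y_\omega',y_\omega]$ to be geodesic; the unbounded case uses the $CAT(0)$ convexity of the distance function to find intermediate points $\fg_x(t_n), \fg_y(t_n)$ at a controlled scale and then reruns the cone argument. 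That convexity step is the only place the $CAT(0)$ hypothesis enters, and it is what lets the proof avoid any a~priori projection bound. So your proposal is genuinely different in strategy, would yield a slightly stronger (additive) conclusion if completed, but as written has a real gap at the BGI assertion.
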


\proof We argue by contradiction and assume that for some
$\lambda$ there exist sequences of sub-quasi-geodesics $\fc_n$ of
$\q_n$ and of pairs of points $x_n, y_n$ such that for some points
$x_n'$ and $y_n'$ on $\fc_n$ minimizing the distance to $x_n$,
respectively $y_n$ we have that $\dist (x_n',y_n') \geq n$ while
$\dist (x_n,y_n) \leq \lambda [\dist (x_n,x_n' )+ \dist (x_n' , y_n'
)+ \dist (y_n',y_n)]\, .$ In what follows we fix some geodesics $[x_n,
x_n']$ and $[y_n,y_n']$, and for every $u_n \in [x_n, x_n']$ and $v_n
\in [y_n,y_n']$ we introduce the notation
$$
\delta(u_n, v_n):=\dist (u_n,x_n' )+ \dist (x_n' , y_n' )+
\dist (y_n',v_n)\, .
$$

We will break the argument into two cases.

{\bf Case (i):} Assume $\lim_\om \frac{\delta (x_n , y_n )}{\dist (x_n' , y_n' )
} < \infty\, $.

Then in $\coneomega (X; (x_n'), \left(\dist (x_n' , y_n' ) \right))$,
the limit $\fc_\omega = \ulim \fc_n$ is a subarc in the transversal
line $\q_\omega = \ulim \q_n$ containing the two points $x_\om'$ and
$y_\om'$ distance $1$ apart, which points are the nearest points in
$\fc_\om$ to the points $x_\om$ and $y_\om \, $.

With an argument as in the proof of the previous lemma we obtain that
$[x_\om , x_\om'] \cup [x_\om', y_\om'] \cup [y_\om' , y_\om]$ is a
geodesic, in particular $\dist (x_n, y_n) = \delta (x_n, y_n) +
o(\dist (x_n' , y_n' ))$.  This contradicts the fact that $\dist
(x_n,y_n) \leq \lambda \delta (x_n, y_n)\, .$

{\bf Case (ii):}  Assume $\lim_\om \frac{\delta (x_n , y_n )}{\dist (x_n' ,
y_n' ) } = \infty\, $, or equivalently that
$$
\lim_\om \frac{\dist (x_n , x_n') + \dist (y_n, y_n')}{\dist (x_n' ,
y_n' ) } = \infty\, .
$$

We consider the parametrization proportional to the arc-length $\fg_x
\colon [0,1] \to [x_n, x_n']$ sending $0$ to $x_n'$ and $1$ to
$x_n$.  Similarly define
$\fg_y \colon[0,1] \to [y_n, y_n']$.

Fix $\lambda' \in (\lambda , 1)$ and for every $n$ consider the
maximal $t_n\in
[0,1]$ such that
$$
\dist (\fg_x (t_n), \fg_y (t_n )) \geq \lambda' \delta (\fg_x (t_n),
\fg_y (t_n ))\, .
$$

Clearly $t_n <1$ and from the continuity of the two sides of the inequality above and the maximality of $t_n$ we deduce that
\begin{equation}\label{eq:tn}
\dist (\fg_x (t_n), \fg_y (t_n )) = \lambda' \delta (\fg_x (t_n),
\fg_y (t_n ))\, .
\end{equation}

Using the convexity of the distance we have:
$$
\lambda' \delta (\fg_x (t_n), \fg_y (t_n )) = \dist (\fg_x (t_n),
\fg_y (t_n ))\leq (1-t_n)\dist (x_n' , y_n' )+ t_n \dist (x_n, y_n)
$$
$$
\leq (1-t_n)\dist (x_n' , y_n' )+ t_n \lambda \delta (x_n,y_n)\leq
\dist (x_n' , y_n' ) + \lambda \delta (\fg_x (t_n), \fg_y (t_n ))\, .
$$

Whence it follows that
$$
(\lambda' - \lambda) \delta (\fg_x (t_n), \fg_y (t_n )) \leq \dist
(x_n' , y_n' )\, .
$$

In particular $\ulim \frac{\delta (\fg_x (t_n), \fg_y (t_n ))}{\dist
(x_n' , y_n' ) }\leq \frac{1}{\lambda' - \lambda } < \infty\, $, whence
$$
\ulim \frac{\dist (\fg_x (t_n), x_n')+ \dist (y_n' , \fg_y (t_n ))}{\dist
(x_n' , y_n' ) } < \infty
$$

If the above limit is zero then since
$$
\dist (x_n' , y_n') -[\dist (\fg_x (t_n), x_n')+ \dist (y_n' , \fg_y (t_n ))]\leq \dist (\fg_x (t_n), \fg_y (t_n )) \leq \delta (\fg_x (t_n), \fg_y (t_n ))
$$ it follows that $\ulim \frac{\delta (\fg_x (t_n), \fg_y (t_n ))}{\dist
(x_n' , y_n' ) } = \ulim \frac{\dist (\fg_x (t_n), \fg_y (t_n ))}{\dist
(x_n' , y_n' ) } =1$.

Thus if in equation~\eqref{eq:tn} we divide by $\dist (x_n' , y_n' )$ and
take the $\omega$--limits we obtain $1=\lambda'$, a contradiction.

We conclude that
$$
0<\ulim \frac{\dist (\fg_x (t_n), x_n')+ \dist (y_n' , \fg_y (t_n ))}{\dist
(x_n' , y_n' ) } < \infty \, .
$$

In $\coneomega (X; (x_n'), \left(\dist (x_n' , y_n' ) \right))$, we
again have that $\fc_\omega = \ulim \fc_n$ is a subarc in the transversal
line $\q_\omega = \ulim \q_n$ containing the two points $x_\om'$ and
$y_\om'$ which are distance $1$ apart.  The limits $\ulim [x_n, x_n']$ and
$\ulim [y_n, y_n']$ are either two rays intersecting $\fc_\omega$ only
in their origin or only one ray like this and one geodesic segment
(possibly trivial) intersecting $\fc_\om$ only in one point.  The
limits of the sequences of points $\fg_x (t_n)$ and $\fg_y (t_n)$
are respectively on each of the two rays (or on the ray and the
segment), in particular it follows that $\dist (\fg_x (t_n), \fg_y
(t_n )) = \delta (\fg_x (t_n), \fg_y (t_n )) + o(\dist (x_n', y_n'))$.

This and equality \eqref{eq:tn} yield a contradiction.\endproof

\begin{lem}\label{lem1}
Let $\q$ be a Morse quasi-geodesic in a $CAT(0)$ metric space $(X,
\dist )$.  There exists a constant $D_0$ such that if $\fc$ is a
sub-quasi-geodesic of $\q$ and two points $x,y\in X$ are such that both
$\dist (x, \fc)$ and $\dist (y, \fc )$ are strictly larger than $\dist
(x,y)$ and $x'$ and $y'$ are points on $\fc$ minimizing the distance
to $x$, respectively $y$, then $\dist (x',y') \leq D_0\, $.
\end{lem}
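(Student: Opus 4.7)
The plan is to derive this as an immediate corollary of Lemma \ref{lem:proj2}, exploiting the slack provided by the strict inequalities in the hypothesis. The key observation is that Lemma \ref{lem:proj2} allows us to choose the proportionality constant $\lambda$ arbitrarily close to $1$, so in particular strictly greater than $1/2$, and this single factor above $1/2$ is what the argument needs.

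First I would fix any $\lambda \in (1/2, 1)$, for concreteness $\lambda = 2/3$, and define $D_0$ to be the constant $D$ produced by Lemma \ref{lem:proj2} applied with this $\lambda$ to the Morse quasi-geodesic $\q$ in the $CAT(0)$ space $X$. Since $x'$ and $y'$ are chosen to minimize distance from $x$ and $y$ to $\fc$, the hypothesis can be rewritten as $\dist(x,x') > \dist(x,y)$ and $\dist(y,y') > \dist(x,y)$.

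Next I would argue by contradiction: assume $\dist(x',y') \geq D_0$. Lemma \ref{lem:proj2} applies directly (our setup matches its hypotheses verbatim) and yields
$$\dist(x,y) \geq \lambda\bigl[\dist(x,x') + \dist(x',y') + \dist(y',y)\bigr].$$
Substituting the two lower bounds on $\dist(x,x')$ and $\dist(y',y)$ gives
$$\dist(x,y) > 2\lambda\,\dist(x,y) + \lambda\,\dist(x',y'),$$
which rearranges as $(2\lambda - 1)\dist(x,y) + \lambda\,\dist(x',y') < 0$. Since $\lambda > 1/2$ and all distances are non-negative, the left-hand side is non-negative, a contradiction. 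Hence $\dist(x',y') < D_0$, which is the desired bound.

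I do not anticipate a serious obstacle: the whole content of the lemma is packaged into Lemma \ref{lem:proj2}, whose proof (done in the preceding two lemmas via an ultralimit/tree-graded argument) is the real work. The only subtlety worth checking is that the hypotheses of Lemma \ref{lem:proj2} (that $\fc$ be a sub-quasi-geodesic of $\q$ and that $x', y'$ be nearest-point realizations on $\fc$) are precisely the hypotheses given here, so no additional compatibility work is needed beyond picking $\lambda$ strictly above $1/2$.
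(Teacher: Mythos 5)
Your proof is correct, and it is essentially the same approach as the paper's: deduce the bound by contrapositive from the proportionality estimate of Lemma~\ref{lem:proj2}, using the strict inequalities $\dist(x,x')>\dist(x,y)$ and $\dist(y,y')>\dist(x,y)$ to force a sign contradiction. One small point: the slack you buy by taking $\lambda>1/2$ is not actually needed; with $\lambda=\tfrac12$ the same substitution gives $\dist(x,y)>\dist(x,y)+\tfrac12\dist(x',y')$, i.e.\ $0>\tfrac12\dist(x',y')$, already a contradiction because of the strictness of the hypothesis (and indeed the paper uses $\lambda=\tfrac12$). A second, more substantive remark in your favor: the paper's one-line proof cites Lemma~\ref{lem:proj1}, whose hypotheses include the upper bound $\dist(x,x')+\dist(y',y)\le M\dist(x',y')$, and that side condition is not verified (nor does it follow from the hypotheses of Lemma~\ref{lem1}). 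The correct reference is Lemma~\ref{lem:proj2}, the $CAT(0)$ version that drops that side condition, which is exactly the lemma you invoke. So your write-up fixes what looks like a mis-citation in the paper while otherwise following the same route.
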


\proof According to Lemma \ref{lem:proj1} there exists $D_0$ such that
if $\fc$ is a sub-quasi-geodesic of $\q$, $x,y\in X$ and $x'$ and $y'$
are points on $\fc$ minimizing the distance to $x$, respectively $y$,
$\dist (x,y) < \frac{1}{2} [\dist (x,x') + \dist (x',y') + \dist (y',
y) ]$ implies $\dist (x',y') \leq D_0$.\endproof

\begin{thm}\label{cor2}
Let $\q$ be a Morse quasi-geodesic in a $CAT(0)$ metric space $(X,
\dist )$.  Then the divergence $\Dv^\q \succeq x^2$.
\end{thm}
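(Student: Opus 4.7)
Fix $r$ large and consider any rectifiable path $\pi$ from $a=\q(-r)$ to $b=\q(r)$ that avoids $B(c,\delta r-\gamma)$ with $c=\q(0)$; the goal is to produce a constant $K>0$, depending only on $\delta,\gamma$, the quasi-geodesic constants $L,C$ of $\q$, its Morse constant, and the constant $D_0$ from Lemma~\ref{lem1}, so that $\mathrm{length}(\pi)\ge Kr^2$. The plan is to apply Lemma~\ref{lem1} with the long sub-quasi-geodesic $\fc=\q|_{[-r,r]}$, for which $\mathrm{proj}_\fc(a)=a$ and $\mathrm{proj}_\fc(b)=b$. Setting $\tau=\epsilon r$ for a sufficiently small $\epsilon>0$, I will subdivide $\pi$ by points $x_0=a,x_1,\ldots,x_n=b$ at consecutive distance $\tau/2$ and call an index $i$ \emph{far} if $\dist(x_i,\fc)>\tau$ and \emph{near} otherwise. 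For any pair of consecutive far indices Lemma~\ref{lem1} yields $\dist(x_i',x_{i+1}')\le D_0$ with $x_i'=\mathrm{proj}_\fc(x_i)$, whereas for consecutive near indices the triangle inequality gives $\dist(x_i',x_{i+1}')\le 3\tau$.

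The geometric core of the argument is that every near $x_i$ lies within $\tau$ of some $\q(s)\in\fc$, and the avoidance of $B(c,\delta r)$ forces $|s|\ge(\delta r-\gamma-\tau)/L-C\ge\delta r/(2L)$ for large $r$. Consequently each near $x_i'$ lies either in the \emph{left component} $\q|_{[-r,-\delta r/(2L)]}$ or the \emph{right component} $\q|_{[\delta r/(2L),r]}$, and these two sub-arcs are separated in $X$ by at least $\delta r/L^2-O(1)$. Choosing $\epsilon$ small enough (e.g.\ $\epsilon<\delta/(6L^2)$) makes $3\tau$ strictly smaller than this gap, so that consecutive near projections must fall in the same component; hence each maximal block of consecutive near indices projects entirely into one side. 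Since $\pi$ begins at $a$ (left) and ends at $b$ (right), there must exist at least one maximal far block $\{i_1,\ldots,i_2\}$ whose boundary projections $x_{i_1}',x_{i_2}'$ lie essentially in opposite components, up to an $O(\tau)$ shift inherited from the adjacent near neighbors.

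On this transitional far block, summing the estimate from Lemma~\ref{lem1} bounds the total projection variation by $(i_2-i_1)D_0$, while the opposite-components condition forces that variation to be at least $\delta r/L^2-O(\tau)$. Therefore $i_2-i_1\ge\delta r/(L^2D_0)-O(1)$, and the length contributed by this single far block is at least $(i_2-i_1)\cdot(\tau/2)\ge(\delta\epsilon/(2L^2D_0))\,r^2-O(r)\ge Kr^2$ for $r$ large. The main technical hurdle is the ``single-component'' argument in the middle paragraph, which demands a careful coordination of $\tau$, the gap $\delta r/L^2$, and the jump bound $D_0$; one must additionally fix a choice of nearest-point projection onto $\fc$ throughout, since it need not be unique as $\fc$ is a quasi-geodesic and not convex in general.
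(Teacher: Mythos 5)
Your proof is correct in outline and takes a genuinely different route from the paper's, so let me compare.

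The paper also applies Lemma~\ref{lem1} but to a different configuration: it takes $\fc$ to be the \emph{short} subpath of $\q$ contained in $\Bl\bigl(\q(0), \frac{\delta}{2}r - 3\gamma\bigr)$, so that \emph{every} point of the avoiding path $\pgot$ is at distance at least $\frac{\delta}{2}r + \gamma$ from $\fc$. Subdividing $\pgot$ at arc-distance $\frac{\delta}{2}r$ then puts every consecutive pair within the hypotheses of Lemma~\ref{lem1}, giving $\dist(x_i',x_{i+1}')\le D_0$ uniformly. The remaining work is to show that $\dist(x_0',x_n')\ge\epsilon r$; the paper does this with an ultralimit: if not, one could rescale by $r_n$ and pass to an asymptotic cone to find two points on the transversal line $\q_\omega$ separated by a ball but with coinciding nearest points on $\q_\omega$, which is impossible. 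From $nD_0\ge\epsilon r$ the quadratic bound follows.

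You instead take $\fc=\q|_{[-r,r]}$ as the \emph{full} segment, which makes the lower bound on projection travel trivial (the endpoints project to themselves and are $\Omega(r)$ apart) but sacrifices the uniform applicability of Lemma~\ref{lem1}, since some subdivision points of $\pi$ are close to $\fc$. You compensate with the near/far decomposition and the observation that near points necessarily project to the two far ends of $\fc$ (an elementary consequence of the ball-avoidance and the quasi-geodesic constants), so that near blocks are monochromatic and the transitional far block must carry $\Omega(r)/D_0$ steps of arc-length $\Omega(r)$ each. This buys you a proof with no ultralimit, at the cost of a slightly more involved combinatorial bookkeeping — in particular one should spell out that the boundary jumps between a near block and the adjacent far block are $O(\tau)$, and that the single-point-far-block degeneracy is ruled out by the same gap estimate. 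Neither of these poses a real obstacle, so your approach is a valid and somewhat more self-contained alternative; the paper's version is shorter but leans on the machinery of asymptotic cones and tree-graded structure.

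One small caution: be careful to quantify $\epsilon$ before fixing $r$, so the constant $K$ is genuinely independent of $r$; you need simultaneously $\epsilon<\delta/(12L^2)$ (so that $6\tau$ is dominated by the component gap $\delta r/L^2 - C$ for large $r$) and $r$ large enough that the $O(1)$ and $\gamma$ terms are negligible. You flag this yourself, and it is only bookkeeping, not a conceptual gap.
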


\proof Let $a=\q(-r)$ and $b=\q (r)$ and let $\pgot $ be a path
joining $a$ and $b$ outside $\Bl (\q (0), \delta r- \gamma )$.  Let
$\fc$ be the maximal subpath of $\q $ with endpoints contained in $\Bl
(\q (0), \frac{\delta}{2} r- 3\gamma )$.  Then for $\gamma$ large
enough we may assume that $\fc$ is entirely contained in $\Bl (\q (0),
\frac{\delta}{2} r- 2\gamma )$, as $\fc$ is at finite Hausdorff
distance from the geodesic joining its endpoints.  All the points in
$\pgot$ are at distance at least $\frac{\delta}{2} r +\gamma$ from
$\fc \, $.  Let $x_0=a, x_1,...,x_n=b$ be consecutive points on
$\pgot$ dividing it into subarcs of length $\frac{\delta}{2} r$
(except the last who might be shorter).  For each of the points $x_i$
let $x_i'\in \fc$ be a point minimizing the distance to $x_i$.
Lemma~\ref{lem1} implies that $\dist (x_i', x_{i+1}')\leq D_0$ for
every $i$.  For some $\epsilon >0$ and for all $r$ sufficiently large
we have $\dist (x_0' , x_n') \geq \epsilon r$.  Indeed if we would
assume the contrary then there would exist a sequence of positive
numbers $r_n$ diverging to infinity such that $\q(-r_n)$ and $q
(r_n)$ have their respective nearest points $u_n$ and
$v_n$ on $\fc_n$, maximal subpath of $\q$ with endpoints in
$\Bl (\q (0), \frac{\delta}{2} r_n- 3\gamma )$, at distance $\dist (u_n, v_n)$ at most $\frac{1}{n} r_n$.  Then in $\coneomega (X,
\q (0), (r_n))$ we obtain two points on the transversal line $\q_\om$
separated by $B(\q_\om (0), \frac{\delta}{2})$ but whose nearest point
projections onto $\q_\om \cap B(\q_\om (0), \frac{\delta}{2})$
coincide.  This is impossible.

 It follows that $\epsilon r \leq n D_0$, whence $n\geq \frac{\epsilon
 }{D_0}r$.  It follows that the length of $\pgot$ is larger than
 $(n-1) \frac{\delta}{2} r \geq \left( \frac{\epsilon }{D_0}r -1
 \right) \frac{\delta}{2} r$.  In other words the length of $\pgot
 \succeq r^2$.\endproof

\begin{cor}\label{cor3}
Assume that a finitely generated group $G$ acts on a $CAT(0)$--space
$X$ such that one (every) orbit map $G\to X\, ,\, g\mapsto gx$, is a
quasi-isometric embedding and its image contains a Morse quasi-geodesic.  Then
the divergence $\Dv^G \succeq x^2$.
\end{cor}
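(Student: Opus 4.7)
The plan is to transfer the quadratic divergence lower bound from Theorem~\ref{cor2} applied to $\q \subseteq X$ back to $G$ via the orbit map $\phi\colon G\to X$, $\phi(g)=gx$, which by hypothesis is an $(L,C)$--quasi-isometric embedding with $\q(\R)\subseteq \phi(G)=Gx$. First I would fix, for each $t\in\R$, an element $g_t\in G$ with $\phi(g_t)=\q(t)$; the three points singled out will be $g_{-r},g_0,g_r$. I would then extend $\phi$ affinely over the edges of the Cayley graph of $G$, sending each edge to a geodesic arc in $X$ of length at most $L+C$. The resulting map (still denoted $\phi$) is coarsely Lipschitz and sends any path $p$ in $G$ from $g_{-r}$ to $g_r$ to a continuous path $\phi(p)$ in $X$ from $\q(-r)$ to $\q(r)$ of length at most $L'\cdot\mathrm{length}(p)+C'$, for constants $L',C'$ depending only on $L,C$ and the chosen finite generating set.

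Next I would apply Theorem~\ref{cor2} to $\q$ to obtain, for a sufficiently small $\delta_0\in(0,1)$ (chosen so that $L\delta_0<1$) and some $\gamma_0\ge 0$, a constant $c>0$ such that for all sufficiently large $r$, every path in $X$ joining $\q(-r)$ to $\q(r)$ while avoiding $\Bl(\q(0),\delta_0 r-\gamma_0)$ has length at least $cr^2$. The key bookkeeping step is then to choose $\delta_1\in(0,1)$ and $\gamma_1\ge 0$ so that the following implication holds: if a path $p$ in $G$ from $g_{-r}$ to $g_r$ avoids $\Bl(g_0,\delta_1 r-\gamma_1)$, then its image $\phi(p)$ avoids $\Bl(\q(0),\delta_0 r-\gamma_0)$ in $X$. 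Using the lower QI inequality $d_X(\phi(v),\q(0))\ge d_G(v,g_0)/L-C$ for vertices $v$ of $p$ (and accounting for an extra $L+C$ on edges through $v$), the choice $\delta_1:=L\delta_0+\varepsilon<1$ with $\gamma_1$ taken large enough will suffice.

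Combining the two estimates, every such $p$ satisfies $\mathrm{length}_G(p)\ge (c/L')r^2-C'/L'$, and taking the infimum over such paths yields $\dv_{\gamma_1}(g_{-r},g_r,g_0;\delta_1)\succeq r^2$. Since $\phi$ is a quasi-isometric embedding, $d_G(g_{-r},g_r)\asymp r$ and $d_G(g_0,\{g_{-r},g_r\})\asymp r$, so this gives $\Dv^G_{\gamma_1}(n;\delta_1)\succeq n^2$, which is the desired lower bound. The sole delicate point in the argument is the constant bookkeeping in the second paragraph: ensuring that the $G$-ball around $g_0$ which $p$ avoids is large enough (up to the factor $L$) to guarantee that $\phi(p)$ genuinely avoids the $X$-ball around $\q(0)$ required to invoke Theorem~\ref{cor2}; once this is arranged, the rest of the argument is a routine transfer of lengths across the quasi-isometric embedding.
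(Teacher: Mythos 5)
Your proof is correct and supplies the standard quasi-isometry transfer argument that the paper leaves implicit (the paper states Corollary~\ref{cor3} without proof, as an immediate consequence of Theorem~\ref{cor2}). The essential content is exactly what you describe: extend the orbit map $\phi$ over edges of the Cayley graph, note that a path in $G$ joining $g_{-r}$ to $g_r$ and avoiding a $G$-ball of radius roughly $\delta_1 r'$ about $g_0$ (where $r'=d_G(g_0,\{g_{-r},g_r\})\asymp r$) maps to a path in $X$ joining $\q(-r)$ to $\q(r)$ avoiding an $X$-ball of radius roughly $\delta_0 r$ about $\q(0)$, and then invoke Theorem~\ref{cor2} for the quadratic lower bound in $X$. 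Your choice $\delta_1 = L\delta_0+\varepsilon$ together with the lower quasi-isometry inequality is the right way to make the ball-avoidance implication go through, provided one also absorbs the additive constants (including the bounded excursion on edge-images) into $\gamma_1$ and works with $r$ large. One small point worth stating explicitly: since $\q$ is parametrized as a quasi-geodesic, the parameter $r$ and the $G$-distance $r'=d_G(g_0,\{g_{-r},g_r\})$ differ by controlled multiplicative and additive constants, so the triple $(g_{-r},g_r,g_0)$ does realize a value of $\dv_{\gamma_1}(\cdot;\delta_1)$ in the definition of $\Dv^G_{\gamma_1}(n;\delta_1)$ for $n\asymp r$, which is what you implicitly use in the final step.
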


A phenomenon similar to that in Lemma~\ref{lem1} may occur in
general in a metric space with a quasi-geodesic.

\begin{defn}
In a metric space $X$ a quasi-geodesic $\q$ is called
$D$--\emph{contracting} (or simply \emph{contracting}) if for every
ball $B$ disjoint from $\q$ the points in $\q$ nearest to points in
$B$ compose a set of diameter $D$.
\end{defn}

The following result is immediate.

\begin{lem}
Let $\q$ be a contracting quasi-geodesic.
\begin{enumerate}
  \item The divergence $\Dv^\q \succeq x^2\, $, \item The
  quasi-geodesic $\q$ is Morse.
\end{enumerate}
\end{lem}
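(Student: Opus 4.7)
Both assertions follow by variants of arguments already developed in this section. For (1), I would imitate the proof of Theorem~\ref{cor2}, with Lemma~\ref{lem1} replaced by the contracting property itself. Given a rectifiable path $\pgot$ from $\q(-r)$ to $\q(r)$ that avoids the ball $B(\q(0), \delta r - \gamma)$, I would subdivide $\pgot$ by consecutive points $x_0 = \q(-r), x_1, \ldots, x_n = \q(r)$ whose successive sub-arcs on $\pgot$ have length $\epsilon r$ for a fixed sufficiently small $\epsilon > 0$, and let $x_i' \in \q$ be a nearest-point projection of $x_i$.

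The crucial estimate is that whenever $x_i'$ lies in the central segment $\q([-\delta r/4, \delta r/4])$, one has $\dist(x_i', x_{i+1}') \leq D$. Indeed, the hypothesis $x_i \notin B(\q(0), \delta r - \gamma)$ combined with $\dist(x_i', \q(0)) \leq \delta r/4$ and the triangle inequality force $\dist(x_i, \q) \geq \delta r/2$ for $r$ sufficiently large. For $\epsilon$ small enough, the ball $B(x_i, \epsilon r)$ is then disjoint from $\q$ and contains $x_{i+1}$, so the contracting property yields the bound. Since the sequence $x_0', \ldots, x_n'$ goes from $\q(-r)$ to $\q(r)$, it must cover the central segment, and any attempt to ``skip'' it in a single step of projection distance $\geq \delta r/2$ would force one of $x_i, x_{i+1}$ to be far enough from $\q$ to reactivate the same estimate, forcing $\dist(x_i', x_{i+1}') \leq D \ll \delta r/2$ and contradicting the skip. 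Hence crossing the central segment requires at least $\sim r/D$ projection steps, each contributing sub-arc length $\epsilon r$ to $\pgot$, whence the length of $\pgot$ is $\succeq r^2$.

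Part (2) is the classical fact that contracting quasi-geodesics are Morse. Given an $(L, C)$-quasi-geodesic $\alpha$ with endpoints $u, v$ on $\q$, let $y \in \alpha$ maximize $\dist(y, \q) = R$. Since $B(y, R)$ is disjoint from $\q$, the contracting property confines the nearest-point projection $\pi(B(y, R))$ to a subset of $\q$ of diameter at most $D$. Iterating this estimate on a chain of balls along $\alpha$ yields a coarse Lipschitz control on the projection $\pi \circ \alpha$, which combined with the quasi-geodesic inequality applied to $\alpha$ gives an absolute bound on $R$ depending only on $L, C,$ and $D$, establishing the Morse property. The main obstacle lies in part (1): one must rule out the ``projection jump'' across the central segment, which is handled by the constraint on $\epsilon$ and the contradiction above.
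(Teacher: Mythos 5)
The paper gives no proof of this lemma at all, stating only that ``the following result is immediate,'' so there is no argument in the paper against which to compare the details of your proposal; both assertions are indeed standard and your task is essentially to supply the standard arguments.

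For part (1), your strategy is the right one: it is the proof of Theorem~\ref{cor2} with Lemma~\ref{lem1} replaced by the contracting hypothesis (which is exactly the role the contracting property is meant to play). The two key estimates are both correct as you state them. In particular your ``skip'' argument is sound and is the crucial point: if $\dist(x_i',x_{i+1}')\geq \delta r/2$ while $\dist(x_i,x_{i+1})\leq \epsilon r$, then by the triangle inequality $\dist(x_i,\q)+\dist(x_{i+1},\q)\geq \delta r/2 - \epsilon r$, so for $\epsilon$ small one of these exceeds $\epsilon r$, the contracting property then forces $\dist(x_i',x_{i+1}')\leq D$, and this contradicts the assumed skip. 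Two small places need tightening but are routine: (i) ``the central segment $\q([-\delta r/4,\delta r/4])$'' should be taken in the metric rather than the parameter of $\q$ (or the radius adjusted by the quasi-geodesic constants $L,C$ of $\q$), and (ii) the final count (``crossing requires $\sim r/D$ steps'') needs a short accounting argument -- e.g.\ clamp the parameters $t_i$ of $x_i'=\q(t_i)$ to $[-\sigma r,\sigma r]$ and sum the telescoping bounds $|u_{i+1}-u_i|\leq D'$ from $-\sigma r$ to $\sigma r$ -- to turn the heuristic into a lower bound $n\succeq r$, whence length $\succeq n\cdot \epsilon r \succeq r^2$.

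For part (2), you correctly observe that this is a well-known fact, but the sketch you give does not close as written. The claim that ``iterating this estimate on a chain of balls yields a coarse Lipschitz control on $\pi\circ\alpha$, which combined with the quasi-geodesic inequality gives an absolute bound on $R$'' runs into the following problem: the Lipschitz rate of $\pi\circ\alpha$ degrades as $\dist(\alpha(s),\q)$ shrinks, so $\pi\circ\alpha$ is not uniformly coarse Lipschitz, and the naive comparison gives no bound. Concretely, take $s_1<s_0<s_2$ with $\dist(\alpha(s_i),\q)=R/2$ and $\dist(\alpha,\q)>R/2$ on $(s_1,s_2)$, set $T=s_2-s_1\geq R$; the contracting property bounds $\dist(\pi\alpha(s_1),\pi\alpha(s_2))\lesssim DT/R$, the triangle inequality then gives $\dist(\alpha(s_1),\alpha(s_2))\leq R + O(DT/R)$, and the quasi-geodesic inequality gives $\dist(\alpha(s_1),\alpha(s_2))\geq T/L - C$; combining these with $T\geq R$ yields $R(1/L - 1)\leq O(D+C)$, which is vacuous for $L\geq 1$ and provides no bound on $R$. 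So the estimates you list do not on their own force $R$ to be bounded; the known proofs of ``strongly contracting implies Morse'' use a finer analysis (e.g.\ tracking the projection across dyadic scales of $\dist(\alpha(s),\q)$, or a jump/separation argument). Since the result is standard and appears in the literature the paper cites (e.g.\ in the sources for contracting quasi-geodesics in $Out(F_n)$ and mapping class groups), a citation would suffice, but the argument you outline should not be taken as a complete proof.
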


The following examples of contracting quasi-geodesics are known:

\begin{enumerate}
  \item cyclic subgroups generated by pseudo-Anosov elements in
  mapping class groups with a word metric \cite{Behrstock:asymptotic};
  \item orbits of fully irreducible elements in
  Outer Space \cite{Algom-Kfir} \cite{Hamenstadt:Linesminimaout};
  \item orbits of pseudo-Anosovs in the Teichm\"uller space with the
  Weil-Petersson metric \cite{Behrstock:asymptotic}.
\end{enumerate}

\begin{cor}\label{cor:out}
Each of the following three metric spaces has quadratic divergence:
\begin{enumerate}
  \item the mapping class group of a surface $S$ with $\xi (S)\geq 2$;
  \item $Out(F_n)$ for $n\geq 3$; \item the Teichm\"uller space with
  the Weil-Peterson metric for surfaces $S$ with complexity $3\times\rm{ genus} + \#\mbox{\rm{(boundary components) }}-3 $ at least $4$, and $S$ is not the one-punctured surface of genus two.
\end{enumerate}
\end{cor}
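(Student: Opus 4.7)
The plan is to establish matching upper and lower bounds $\Dv \asymp x^{2}$ in each of the three cases; both bounds are already in hand in the paper, so the argument will be essentially a bookkeeping combination of previously recorded results.

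For the lower bound I would invoke the lemma stated immediately above the corollary: any contracting bi-infinite quasi-geodesic $\q$ satisfies $\Dv^{\q} \succeq x^{2}$. Since $\Dv^{\q} \preceq \Dv^{X}$ for any bi-infinite quasi-geodesic $\q$ in $X$ (the observation recorded just after Definition \ref{Div}), it suffices to exhibit a contracting bi-infinite quasi-geodesic in each of the three ambient spaces. These are precisely the three examples listed between that lemma and the corollary: a cyclic subgroup generated by a pseudo-Anosov in $\MCGS$ \cite{Behrstock:asymptotic}, an orbit of a fully irreducible automorphism in $Out(F_n)$ \cite{Algom-Kfir,Hamenstadt:Linesminimaout}, and a pseudo-Anosov orbit in the Weil--Petersson Teichm\"uller space \cite{Behrstock:asymptotic}.

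For the matching upper bound I would appeal to results already established in the paper. For $\MCGS$ with $\xi(S)\geq 2$ and for $Out(F_n)$ with $n\geq 3$, Examples \ref{thickexamples}(1) and (2) record strong algebraic thickness of order~$1$, so Corollary \ref{thickimpliespolynom} yields divergence $\preceq x^{2}$; in the $Out(F_n)$ case this is exactly the content of Corollary \ref{cor:lowerOut}. For the Weil--Petersson Teichm\"uller space the quadratic upper bound under the stated complexity hypotheses is the content of the unnumbered corollary immediately preceding Corollary \ref{cor:lowerOut}, whose proof combines Corollary \ref{thickimpliespolynom} with \cite[Theorem~12.3]{BehrstockDrutuMosher:thick1} and \cite[Theorem~18]{BrockMasur:WPrelhyp}.

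There is no serious obstacle: the real work was done earlier in the paper (the thickness-to-divergence principle of Corollary \ref{thickimpliespolynom}, together with the verifications of thickness of order $1$) and in the cited literature (the contracting property of pseudo-Anosovs and of fully irreducible automorphisms). The one place requiring attention is compatibility of hypotheses in the Weil--Petersson case: the exclusion of the one-punctured genus-two surface is needed because there only a cubic upper bound on divergence is currently available, whereas the complexity assumption $3g+p-3\geq 4$ together with $(g,p)\neq(2,1)$ precisely guarantees the quadratic upper bound, and the existence of a contracting pseudo-Anosov orbit supplies the matching quadratic lower bound, giving $\Dv \asymp x^{2}$.
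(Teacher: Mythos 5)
Your proposal is correct and follows precisely the argument the paper intends: the lower bound comes from the lemma on contracting quasi-geodesics applied to the three contracting examples listed immediately before the corollary, and the upper bound comes from the thickness-to-divergence estimate (Corollary \ref{thickimpliespolynom}), instantiated in Corollary \ref{cor:lowerOut} and the unnumbered corollary for the Weil--Petersson case. Since the paper states Corollary \ref{cor:out} without a separate proof, your reconstruction from the surrounding material is exactly the intended one.
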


Note that in \cite{Duchin-Rafi} it is proved that the Teichm\"uller
space with the Teichm\"uller metric has quadratic divergence, and an
alternative proof of the quadratic divergence for the mapping class
group is provided.

To sum up the relationship between divergence and existence of
cut-points in asymptotic cones the following are known:

\me

\begin{itemize}
  \item if all asymptotic cones are without cut-points then the
  divergence is linear;

  \me

  \item if we assume that at least one asymptotic cone is without
  cut-points then examples were constructed by Olshanskii-Osin-Sapir
  of groups $G$ with divergence satisfying $\Dv^G (n) \leq C n$ for a
  constant $C$ for all $n$ in an infinite subset of $\N$ and with
  $\Dv^G\preceq f(n)$ for any $f$ such that $\frac{f(n)}{n}$
  non-decreasing; in particular $\Dv^G$ may be as close to linear as
  possible; but it is superlinear if one asymptotic cone has
  cut-points;

      \me

  \item if the space is $CAT(0)$ and all asymptotic cones have
  cut-points coming from the limit set of a Morse quasi-geodesic then
  the divergence is at least quadratic.
\end{itemize}

This raises the following natural question.

\begin{qn} If a $CAT(0)$ (quasi-homogeneous) metric space has
cut-points in every asymptotic cone, must the divergence of that
metric space be at least quadratic?
\end{qn}

An affirmative answer to this question would be an immediately
corollary of an affirmative answer to the following (which we expect
would be more difficult to establish):

\begin{qn}
Does the existence of cut-points in every asymptotic cone of a
$CAT(0)$ quasi-homogeneous metric space imply the existence of a Morse
quasi-geodesic?
\end{qn}

\section{Morse elements and length of the shortest conjugator}
\label{sec:conjug}

It is known that given a group $G$ acting properly
discontinuously and cocompactly on a $CAT(0)$--space, and two elements
$u,v$ that are conjugate in $G$ there exists $K>0$ depending
on the choice of word metric in $G$ such that $v = gug\iv$ for some
$g$ with $|g|\leq \exp (K (|u|+|v|))$.  As shown below,
a similar estimate on the length of the shortest
conjugator holds in 
a more general context of groups with some
non-positively curved or hyperbolic geometry associated to them.

\subsection{The $CAT(0)$ set-up}

A standard $CAT(0)$ argument yields a bound on the length of the
shortest conjugator
of two axial Morse isometries $u$ and $v$ of a locally compact
Hadamard space in terms of two parameters of the geometry of the
action of both $u$ and $v$.  We define these parameters below.

\begin{enumerate}
  \item Recall that given an axial isometry $u$ of a $CAT(0)$ space
  the set
  $$\mn (u)= \{ x\in X \mid \dist (x,ux ) = \inf_{y\in X} \dist
  (y,uy )\}$$ is isometric to a set $C\times \R\, ,$ where $u$ acts as
  a translation with translation length $t_u$ along each fiber $\{ c\}
  \times \R$ and $C$ is a closed convex subset \cite[Theorem 6.8, p.
  231]{Bridson-Haefliger}.  When the axial isometry is Morse, the set
  $C$ is bounded, and we denote by $D_u$ the diameter of the set
  $C\times [0,t_u]\, $.

      \me

  \item There exists $\theta_u >0$ such that if $x$ is a point outside
  $\nn_1\left( \mn (u) \right)$ and $x'$ is its nearest point
  projection onto $\mn (u)$ then
       \begin{equation}\label{eq:dxux}
	\dist (x,u x)\geq \dist (x', ux') + \theta_u \dist (x,x')\, .
       \end{equation}

       Indeed assume that on the contrary we have points $x_n$ outside
       $\nn_1\left( \mn (u) \right)$ with projections $x_n'$ such that
       $\dist (x_n,u x_n)\leq \dist (x_n', ux_n') + \frac{1}{n} \dist
       (x_n,x_n')$.  By the convexity of the distance we may assume
       that all $x_n$ are at distance $1$ from $\mn (u)$ and by
       eventually applying powers of $u$ we may assume that all $x_n'$
       are in the compact set $C\times [0,t_u]\, $.  Since $X$ is
       locally compact the quadrangles of vertices $x_n, x_n', ux_n',
       ux_n$ converge on a subsequence in the Hausdorff distance to a
       flat quadrangle $a, a', ua', ua$ intersecting $\mn (u)$ in the
       edge $[a',ua']$ and such that for every $z\in [a,a']$, $\dist
       (z,uz) =t_u$.  This contradicts the definition of $\mn (u)$.\end{enumerate}

  \begin{prop}
  Let $X$ be a locally compact Hadamard space, let $G$ be a group of
  isometries of $X$ and let $x_0$ be a point in $X$.

  Let $u$ and $v$ be two Morse axial isometries of $X$ that are
  conjugate in $G$.  Then there exists an element $g\in G$ such that
  $v= g u g\iv$ and such that
  $$
  \dist(x_0, gx_0)\leq \frac{1}{\theta_u } \left[ \dist (x_0, ux_0) +
  \dist (x_0, vx_0)\right] + D_u +2\, .
  $$

  In particular if the map $g\mapsto gx_0$ is a quasi-isometric
  embedding of $G$ into $X$ then $\dist_G (1,g) \leq_{A,B} \dist (1,
  u) + \dist (1, v)\, ,$ where the constants $A,B$ depend on the
  constants of the above isometries, on $\theta_u$ and on $D_u$.
  \end{prop}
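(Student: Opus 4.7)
The plan is to start from an arbitrary conjugator $h$ satisfying $v = huh\iv$ and then modify it to $g = hu^k$ for a carefully chosen integer $k$. Any such $g$ is still a conjugator, and the extra freedom lets us control the position of $gx_0$.

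First, I would convert inequality \eqref{eq:dxux} into a projection bound. Let $p_u$ denote the nearest point projection onto $\mn(u)$. If $x_0 \notin \nn_1(\mn(u))$, rearranging \eqref{eq:dxux} gives $\dist(x_0, p_u(x_0)) \leq \theta_u\iv \dist(x_0, ux_0)$; otherwise $\dist(x_0, p_u(x_0)) \leq 1$ by definition. Either way,
\[
\dist(x_0, p_u(x_0)) \leq \theta_u\iv \dist(x_0, ux_0) + 1.
\]
Since any conjugator is an isometry carrying $\mn(u)$ onto $\mn(v)$ and intertwining the translation structures along the $\R$-factors, the invariants attached to $v$ satisfy $\theta_v = \theta_u$ and $D_v = D_u$, so the same estimate holds with $v$ in place of $u$.

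Next, I would exploit the freedom to multiply $h$ on the right by powers of $u$. The points $h p_u(x_0)$ and $p_v(x_0)$ both lie in $\mn(v) = h\mn(u)$, so after pulling back by $h\iv$ the task reduces to finding $k \in \Z$ with $\dist(u^k p_u(x_0),\, h\iv p_v(x_0)) \leq D_u$, both points sitting in $\mn(u) \cong C \times \R$. Since $u$ acts as a translation of length $t_u$ along the $\R$-factor and the fundamental domain $C \times [0, t_u]$ has diameter $D_u$, every $\la u\ra$-orbit meets each translate of this fundamental domain; choosing $k$ so that both points land in the same such translate yields the claim. Set $g = hu^k$.

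Finally, I would apply the triangle inequality along $x_0 \to p_v(x_0) \to gp_u(x_0) \to gx_0$, using that $g$ is an isometry to identify $\dist(gp_u(x_0), gx_0)$ with $\dist(p_u(x_0), x_0)$, which gives
\[
\dist(x_0, gx_0) \leq \dist(x_0, p_v(x_0)) + D_u + \dist(x_0, p_u(x_0)).
\]
Substituting the two projection bounds from the first step produces the announced inequality $\theta_u\iv(\dist(x_0, ux_0) + \dist(x_0, vx_0)) + D_u + 2$, and the ``in particular'' clause is then immediate from the definition of a quasi-isometric embedding applied to the orbit map $g \mapsto gx_0$. The only steps with genuine content are the projection estimate, which is just \eqref{eq:dxux} rearranged, and the cocompactness argument for choosing $k$; both are built into the definitions of $\theta_u$ and $D_u$ preceding the statement, so the main obstacle is really the bookkeeping of these two ingredients rather than any new geometric input.
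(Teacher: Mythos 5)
Your proof is correct and follows essentially the same route as the paper's: project $x_0$ onto $\mn(u)$ and $\mn(v)$, use the defining inequality for $\theta_u$ to bound the projection distances, and then spend the $\langle u\rangle$-translation freedom in the conjugator to align the two projected images within one fundamental domain $C\times[0,t_u]$ of diameter $D_u$. Indeed, your replacement $g=hu^k$ is literally the same element the paper obtains by replacing the original conjugator by $v^kg$, since $v=hu h\iv$ gives $v^kh=hu^k$; and your explicit observation that conjugation by an isometry forces $\theta_v=\theta_u$ and $D_v=D_u$ simply makes transparent a step the paper uses implicitly when it applies the $\theta_u$-bound to $v$.
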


  \proof Let $y_0$ be the nearest point projection of $x_0$ onto $\mn
  (u)$ and $z_0$ the nearest point projection of $x_0$ onto $\mn (v)$.

Let $g\in G$ be such that $v= g u g\iv$.  Then $\mn (v) = g\mn (u)$,
in particular $gy_0\in \mn (v)$.  By eventually replacing $g$ with
$v^kg $ for an appropriate $k\in \Z$ one may assume that both $z_0$
and $gy_0$ are in the same isometric copy of $C\times [0,t_u]$, hence within
distance at most $D_u \,$.

The distance $\dist(x_0, gx_0)$ is at most $\dist (x_0, z_0) + D_u +
\dist (y_0, x_0)\, $.  On the other hand either $\dist (x_0, y_0) \leq 1$ or $\dist (x_0, y_0) \leq
\frac{1}{\theta_u } \dist (x_0, ux_0)$. Similarly, either  $\dist (x_0, z_0) \leq 1$ or $\dist (x_0, z_0) \leq
\frac{1}{\theta_u } \dist (x_0, vx_0)$. Hence
$$
\dist(x_0, gx_0)\leq \frac{1}{\theta_u } \left[ \dist (x_0, ux_0) +
\dist (x_0, vx_0) \right] + D_u +2\, .
$$
  \endproof

  When $X$ is a Hadamard manifold the constant $D_u$ equals $t_u$ and
  it is controlled by $\dist (1,u)$ as well.  The problem is to find a
  lower bound for $\theta_u$ uniform for all Morse axial isometries
  $u$.  This is equivalent to the property of having ``uniform
  negative curvature'' in the neighborhood of all the periodic Morse
  geodesics.

\subsection{Conjugators in groups acting acylindrically on trees}

In this subsection we prove the following general result and then give an
application to $3$--manifolds.

\begin{thm}\label{thm:conjacyltree}
     Let $G$ be a group acting cocompactly and $l$-acylindrically on a simplicial tree $T$. For every $R>0$ and for a fixed word metric on $G$ let $f(R)$ denote the supremum of all diameters of intersections $\stab (a)\cap \nn_R( g\stab (b))$, where $a$ and $b$ are vertices in $T$ at distance at least $l$, and $g\in G$ is at distance $\leq R$ from $1$.

There exists a constant $K$ such that if
two loxodromic elements $u, v$ are conjugate in $G$ then there exists $g$
conjugating $u,v$ such that
$$
|g| \leq f(|u|+|v|+K)+|u|+|v|+2K\, .
$$
\end{thm}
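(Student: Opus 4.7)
The plan is to decompose a carefully chosen conjugator as $g = h\cdot t$, where $h$ accounts for the ``tree-displacement'' of $g$ and $t$ lies in a particular stabilizer coset-intersection whose diameter $f$ will control.

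First I would fix a base vertex $x_0\in T$ and, by cocompactness of the action, a finite generating set of $G$ for which each generator moves $x_0$ by at most one edge. With such a generating set $d_T(x_0,hx_0)\leq |h|$ for every $h\in G$, and every vertex $y$ in the $G$-orbit of $x_0$ has a realizer $\gamma\in G$ (with $\gamma x_0=y$) satisfying $|\gamma|\leq d_T(x_0,y)+K_0$. The conjugate loxodromics $u,v$ share translation length $\tau$ on $T$; let $A_u, A_v$ be their axes and $p_u, p_v$ the nearest points on these to $x_0$, so that $d_T(x_0,p_u)\leq |u|/2$ and $d_T(x_0,p_v)\leq |v|/2$.

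Next, set $N := \lceil l/\tau\rceil$ (so $N\tau\geq l$), pick vertices $a\in A_u$ near $p_u$ and $b\in A_v$ near $p_v$, and define $a':= u^N a$, $b':= v^N b$. Then $d_T(a,a')=N\tau\geq l$. Starting from any conjugator $g_0$ of $u$ to $v$, I would replace it with $g := v^k g_0$ for the integer $k$ minimizing $d_T(v^k g_0 \cdot a, b)$; the images $\{v^k g_0\, a\}_{k\in\Z}$ form an arithmetic progression of spacing $\tau$ on $A_v$, so this yields $d_T(ga,b)\leq \tau/2$ and, since $ga'=v^N ga$ and $b'=v^N b$, also $d_T(ga',b')\leq \tau/2$. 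Using the adapted generating set above, I would then choose $h,h'\in G$ with $ha=ga$ and $h'a'=ga'$, each of word length at most $|u|+|v|+K_1$ for a constant $K_1$ depending only on $G$, $T$, and the generating set.

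Now define $t := h^{-1}g$ and $t' := (h')^{-1}g$, so that $t\in\stab(a)$, $t'\in\stab(a')$, and the equality $ht=g=h't'$ rearranges as $t=(h^{-1}h')\,t'$. Thus $t\in \stab(a)\cap (h^{-1}h')\stab(a')$. Setting $R := |h^{-1}h'|$, the key observation that makes $f$ directly control $|t|$ is that the identity $1\in G$ also lies in $\stab(a)\cap \nn_R\bigl((h^{-1}h')\stab(a')\bigr)$: trivially $1\in\stab(a)$, and $|h^{-1}h'\cdot 1|=R$ places $1$ within $R$ of $h^{-1}h'\in (h^{-1}h')\stab(a')$. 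Since $d_T(a,a')\geq l$ and $|h^{-1}h'|\leq R$, the definition of $f$ gives $|t|\leq \diam\bigl(\stab(a)\cap \nn_R((h^{-1}h')\stab(a'))\bigr)\leq f(R)\leq f(|u|+|v|+K)$ after absorbing constants. Combining with $|h|\leq |u|+|v|+K_1$ yields $|g|\leq |h|+|t|\leq f(|u|+|v|+K)+|u|+|v|+2K$ for a suitably chosen $K$.

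The principal technical obstacle is the lifting step: making $|h|, |h'|$ and consequently $|h^{-1}h'|$ come out linear in $|u|+|v|$ with additive constants compatible with the target form, so that the $f$-argument is $|u|+|v|+K$ up to absorbable constants rather than a larger multiple thereof. This requires careful placement of the vertex pairs near the projections $p_u,p_v$ and exploiting that the adapted generating set makes tree-distances and word-lengths comparable up to a single additive constant; the clean diameter-to-norm conversion in the final step (via including $1$ in the same $f$-controlled set as $t$) is what lets the proof avoid any separate search for a short reference element inside the stabilizer intersection.
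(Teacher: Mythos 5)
Your overall strategy matches the paper's: normalize the conjugator by a power of $v$, exhibit two elements in a common vertex-stabilizer coset whose offset is controlled, and invoke the diameter bound $f$. The observation that the identity also lies in $\stab(a)\cap\nn_R\bigl((h^{-1}h')\stab(a')\bigr)$ — so that the diameter bound directly converts into $|t|\leq f(R)$ — is a genuine simplification and matches the definition of $f$ cleanly (indeed more cleanly than the paper's own Lemma 5.8, which appears to conflate $h\stab(o)$ with $h\stab(b)$ in its last line).

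However, the lifting step contains a serious gap. The claimed realizer bound — that every $y\in G\cdot x_0$ admits $\gamma$ with $\gamma x_0 = y$ and $|\gamma|\leq d_T(x_0,y)+K_0$ — is false for cocompact actions on trees in general, because the action need not be proper. Take $G = A*B$ with $A=\langle a_1,a_2\rangle$ and $B=\langle b_1,b_2\rangle$ two free groups of rank $2$ acting on the Bass--Serre tree of this splitting. The action is cocompact and $1$-acylindrical (so $f$ is finite and the hypotheses of the theorem hold), yet the orbit vertex $y = a_1^n b_1\cdot x_0$, where $x_0$ is the vertex fixed by $A$, satisfies $d_T(x_0,y)=2$ while every element of the coset $\pi^{-1}(y)=a_1^n b_1 A$ has word length at least $n+1$. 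So no bound $|\gamma|\leq C\,d_T(x_0,y)+C$ can hold. (Even the preliminary assertion that a generating set exists in which each generator moves $x_0$ by at most one edge fails in any amalgamated-product setting, since $d_T(x_0,gx_0)$ is always even, forcing every such generator into $\stab(x_0)$.) Without a valid realizer bound you cannot produce $h,h'$ with $|h|,|h'|$ linear in $|u|+|v|$, and the argument for $R=|h^{-1}h'|\leq |u|+|v|+K$ collapses.

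The paper circumvents this exact difficulty: it does not claim realizers of length controlled by tree distance for arbitrary orbit points. Instead it constructs, via the finite interpolation sets $V(s,o)$, short realizers only for orbit points lying on the $T$-image of the fixed Cayley-graph geodesics $[1,u^m]$ and $[1,v^m]$, where the realizer length is controlled by the length of that Cayley geodesic (hence by $|u|$, $|v|$) rather than by $d_T$. That construction is precisely the technical content your proposal is missing, and it is not a bookkeeping detail — it is where cocompactness (without properness) is actually used.
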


\proof  Without loss of generality, by passing to a subtree if necessary, we assume that $G$ acts without inversions of edges. Let $D$ be a finite sub-tree of $T$ (possibly without some endpoints) that is a fundamental domain for the action of $G$ on $T$ \cite[$\S 3.1$]{Serre:trees}. Let $S$ be the fixed finite generating set defining the word metric on $G$. For every $s\in S$ and every vertex $o\in D$ we consider $g_1=1,g_2,...,g_{m-1},g_m=s$ in $G$ such that $o, g_1\cdot o, g_2\cdot o,...,
g_{m-1}\cdot o , s\cdot o $ are the consecutive intersections of
$[o,s\cdot o]$ with $G\cdot o$. We denote by $V(s,o)$ the set of elements $\{ g_1,...,g_m \}$.

 Let $u,v$ be two loxodromic elements in $G$ such that $v=gug\iv$ for some
 $g\in G$.  The element $u$ has a translation axis $A_u$ in $T$.  Likewise $v$ has an
 axis $A_v \subset T$ and $gA_u=A_v$.  Our goal is to control $|g|_S$ in
 terms of $|u|_S + |v|_S$.

 If $D$ intersects $A_u$ then take a vertex $o$ in the intersection. If not, let $p$ be the nearest point to $D$ on $A_u$ and consider the unique vertex $o\in D$ and an element $h\in G$ such that $p=h0\, $.

For each $g\in G$ we write $\pi(g)$ to denote
$g\cdot o$. Also for every geodesic $[a,b]$ in the Cayley graph of $G$, with consecutive vertices $g_1=a,g_2,...,g_m=b$ we denote by $\pi [a,b]$ the path in the tree $T$ composed by concatenation of the consecutive geodesics $[g_1o, g_2o],...[g_{m-1}o, g_m o]$.

 \begin{lem}\label{lem9}
For every $R\ge 0$ and every pair of vertices $a,b$ in the orbit $G\cdot o$ with $\dist(a,b)\geq l$
    the set $V_{a,b}$ of elements $g\in\pi\iv(a)$ such
that $\dist(g,\pi\iv(b))\le R$, if non-empty, has diameter at most $f(R)$.
\end{lem}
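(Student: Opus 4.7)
The plan is to reduce $V_{a,b}$ to a canonical form by left-translation, and then to recognize the resulting set as an intersection of the form appearing in the definition of $f(R)$.

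The first step is to pick any $g_1\in V_{a,b}$ and left-translate the whole configuration by $g_1^{-1}$. Since the word metric on $G$ is left-invariant, this is an isometry of $G$, and in particular the diameters of $V_{a,b}$ and $g_1^{-1}V_{a,b}$ agree. A direct check shows $g_1^{-1}V_{a,b}=V_{o,b'}$ where $b':=g_1^{-1}b$; in particular $\dist_T(o,b')=\dist_T(a,b)\ge l$, and $1\in V_{o,b'}$. The latter provides a witness $k_0\in\pi^{-1}(b')$ (i.e., $k_0\cdot o=b'$) with $|k_0|\le R$.

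The second step uses the standard identity $\pi^{-1}(b')=\stab(b')\cdot k_0=k_0\cdot\stab(o)$, which follows from $\stab(k_0\cdot o)=k_0\stab(o)k_0^{-1}$. The defining inclusion $V_{o,b'}\subseteq \stab(o)\cap\nn_R(\pi^{-1}(b'))$ therefore becomes $V_{o,b'}\subseteq \stab(o)\cap\nn_R(\stab(b')\,k_0)$. Taking $a=o$, $b=b'$, and $g=k_0$ (so $|g|\le R$ and $\dist_T(a,b)\ge l$), this is exactly an intersection of the form $\stab(a)\cap\nn_R(g\stab(b))$ governed by the definition of $f(R)$, so $\diam V_{o,b'}\le f(R)$; combined with the isometry of the first step this yields $\diam V_{a,b}\le f(R)$.

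The one point requiring care is the coset convention in the definition of $f$: the fibre $\pi^{-1}(b')$ is most naturally a right coset $\stab(b')k_0$, whereas one could also read $g\stab(b)$ as a left coset. A useful auxiliary estimate, provable in one line, is that every $sk_0\in\stab(b')k_0$ satisfies $\dist(sk_0,s)=|k_0^{-1}|\le R$, whence $\pi^{-1}(b')\subseteq \nn_R(\stab(b'))$ and $V_{o,b'}\subseteq \stab(o)\cap\nn_{2R}(\stab(b'))$. Under the left-coset reading, this last inclusion, with $g=1$, furnishes the slightly weaker bound $\diam V_{a,b}\le f(2R)$; the factor of $2$ is harmless, as it is readily absorbed into the constant $K$ in Theorem~\ref{thm:conjacyltree}.
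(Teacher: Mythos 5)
Your proposal is correct and takes essentially the same route as the paper: reduce to $a=o$ by left-translation (the paper's ``without loss of generality we may assume $a=o$'' and ``we may assume $g=1$''), then recognize $V_{o,b'}$ as a set of the type controlling $f$. You are also right to flag the coset convention: what arises from $\pi\iv(b')$ is the \emph{right} coset $\stab(b')k_0$ (equivalently the left coset $k_0\stab(o)$), whereas the definition of $f$ in Theorem~\ref{thm:conjacyltree} is phrased with left cosets $g\stab(b)$; the paper's own proof silently writes ``$\nn_R(h\stab(b))$'' for what is really $\nn_R(\stab(b)h)$. Your workaround --- replacing $\stab(b')k_0$ by $\nn_R(\stab(b'))$ and accepting $f(2R)$ instead of $f(R)$ --- is a correct and honest repair; as you note, the resulting quantitative change only perturbs the constant in the conclusion of Theorem~\ref{thm:conjacyltree}. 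In short, your proof is a more carefully stated version of the paper's argument and exposes a small notational slip in it.
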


\proof Without loss of generality we may assume that $a= o$, hence $\pi\iv(a) = \stab (o)$, and that there exists $g$ in $\stab (o)$ at distance at most $R$ from $\pi\iv(b)$. Without loss of generality we may assume that $g=1$. The set $\pi\iv(b)$ can then be written as $h\stab (o)$, where $h\in B(1,R)$, $h \cdot o = b\, $.

Every $g\in V_{o,b}$ is then in $\stab (o)$ and in $\nn_{R} (h\stab (b))$. It follows from the hypothesis that the diameter of $V_{o,b}$ is at most $f(R)\, $.\endproof

 Consider the geodesic $[1, u^m]$, for some fixed large enough power $m$.  Its image by $\pi$ covers the geodesic $[o, u^m o]$. The latter geodesic contains the two points $ho$ and $u^mho$.  Then $[go,
 gu^mo]$ intersects $A_v$ in $[gho, gu^mho]$, which can also be written as $[ko, v^mko]$ for $k=gh$.

 Likewise, the image under $\pi$ of the geodesic $[1, v^m]$ contains the geodesic $[o, v^mo]$ in $T$,
 and the latter geodesic contains a sub-geodesic of $A_v$ of the form
 $[ro, v^{m-1}ro]$, with $r=v^i ko$ for some $i\in \Z$.
 By possibly post-composing $g$ with $v^{i}$ we may assume
 that $ko$ and $ro$ coincide. If $m\geq l+1$ then $[ro, v^{m-1}ro]$ is of length at least
 $l$.  This implies that the geodesic $[1, v]$ in the Cayley graph
 contains two pairs of consecutive vertices $v_1, v_1'$ and $v_2,v_2'$
 such that $ro \in [v_1o , v_1'o]$ and $v^{m-1}ro \in [v_2 o, v_2'o]$.

 Likewise $[1,u]$ contains
 two pairs of consecutive vertices $u_1, u_1'$ and $u_2,u_2'$ such that $ho \in [u_1o , u_1'o]$ and $u^{m-1}ho \in [u_2 o, u_2'o]$.  We thus have that $ro = v_1x_1o = gu_1x_1'o$, where $x_1\in V(v_1\iv v_1',o)$ and $x_1'\in V(u_1\iv u_1',o)$; and that $v^{m-1}ro = v_2 x_2 o = gu_2 x_2'o$, where $x_2 \in V(v_2\iv v_2',o)$ and $x_2'\in V(u_2\iv u_2',o)$.

 We denote by $M$ the maximum of all the $|x|_S$ for all $x$ in all sets of the form $V(o,s)$ for a vertex $o\in D$ and $s\in S$. The above implies that both $v_1x_1$ and $gu_1x_1'$ are in
 $\pi\iv (ro)$ and at distance at most $|u|+|v|+M$ from $\pi\iv (v^{m-1}ro)$.
 Lemma \ref{lem9} implies that $v_1$ and $gu_1$ are within distance at
 most $f(|u|+|v|+M) +2M$.  It follows that
 $$
 |g| \leq |v| + f(|u|+|v|+M) +|u|+2M\, .
 $$
\endproof

The above general results imply in particular a linear control of the
shortest conjugator for Morse geodesics in (non-geometric) $3$--manifolds.

\begin{cor}\label{thm:conjgraphmfld}
Let $M$ be a non-geometric prime 3-dimensional manifold and let $G$ be its
fundamental group.

For every word metric on $G$ there exists a constant $K$ such that if
two Morse elements $u, v$ are conjugate in $G$ then there exist $g$
conjugating $u,v$ such that
$$
|g| \leq K(|u|+|v|)\, .
$$
\end{cor}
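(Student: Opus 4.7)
The plan is to apply Theorem~\ref{thm:conjacyltree} to the action of $G$ on the Bass-Serre tree $T$ associated with the JSJ decomposition of $M$. Since $M$ is non-geometric and prime, its JSJ graph of groups is non-trivial, and $G$ acts cocompactly on $T$ with vertex stabilizers conjugate to fundamental groups of Seifert-fibered or cusped hyperbolic pieces, and edge stabilizers conjugate to peripheral $\Z^2$ subgroups. The first task is to verify that this action is $l$-acylindrical for some fixed $l$ (in fact $l=3$ suffices). This rests on two classical facts: in a Seifert piece, two distinct boundary tori intersect exactly in the fiber ($\Z$-center), while in a cusped hyperbolic $3$-manifold two distinct peripheral $\Z^2$ subgroups intersect trivially. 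The non-geometric hypothesis ensures that consecutive Seifert pieces are glued with non-matching fiber directions, so the two cyclic fibers arising on a common edge are distinct subgroups of the $\Z^2$-edge stabilizer; consequently, the stabilizer of a length-three path in $T$ is trivial.

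Next I would check that every Morse element of $G$ acts loxodromically on $T$. Morse elements conjugate into a Seifert vertex stabilizer cannot occur, because Seifert $3$-manifold groups are wide (they contain $\Z$ in their center) and thus have no cyclic Morse subgroups. Morse elements conjugate into a hyperbolic vertex stabilizer $H$ could in principle exist; for these one argues separately using that $H$ is quasi-convex and relatively hyperbolic with respect to its cusps, so a $G$-conjugacy between two such Morse elements reduces (up to an adjustment by a bounded tree-theoretic factor) to an $H$-conjugacy, for which a linear bound is classical.

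With these preliminaries in place, Theorem~\ref{thm:conjacyltree} provides a bound of the form
\[
|g| \le f(|u|+|v|+K) + |u|+|v|+2K,
\]
and it remains to verify that the auxiliary function $f(R)$ grows at most linearly in $R$. Concretely, for vertices $a, b$ of $T$ at distance $\geq l$ and $g\in G$ with $|g|\leq R$, I would show that the set $\stab(a)\cap \nbhd_R(g\,\stab(b))$ has diameter bounded linearly in $R$. This is a thin-intersection statement for quasi-convex subgroups: JSJ vertex groups are undistorted in $G$ (by the $CAT(0)$ structure on the universal cover in the graph-manifold case, and by relative hyperbolicity with respect to the JSJ tori in the mixed case), and two distinct vertex-group cosets have at most $\Z^2$-coarse intersection supported near their common peripheral torus. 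Hence a long element of $\stab(a)$ remaining $R$-close to $g\,\stab(b)$ forces a comparably long overlap of the two cosets near a common torus, and therefore $|g|$ must be of comparable size.

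The main obstacle will be the linear estimate on $f(R)$: while morally it follows from thin intersections of quasi-convex subgroups in non-positively curved groups, the $\Z^2$ peripheral subgroups mean one cannot quote a standard hyperbolic-group statement and must argue either through the $CAT(0)$ geometry of the graph-manifold universal cover or through the relatively hyperbolic structure, with some uniformity in the gluing data. Substituting a linear bound $f(R)\leq c R$ into the estimate above then yields $|g|\leq K'(|u|+|v|)$, completing the proof.
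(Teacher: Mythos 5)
Your overall strategy matches the paper's: apply Theorem~\ref{thm:conjacyltree} to the Bass--Serre tree of the geometric/JSJ decomposition, separate the case of loxodromic Morse elements from the case of Morse elements fixing a (necessarily hyperbolic) vertex, and argue that $f(R)$ is linear. However, there are two genuine gaps.

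First, your argument that $f(R)$ is linear is not quite right. You write that ``two distinct vertex-group cosets have at most $\Z^2$-coarse intersection supported near their common peripheral torus,'' but in the definition of $f(R)$ the vertices $a$ and $b$ are at tree-distance $\ge l \ge 3$, so the cosets $\stab(a)$ and $g\stab(b)$ do \emph{not} share a peripheral torus; their intersection is trivial (not merely $\Z^2$). What must be controlled is the diameter of $\stab(a)\cap \nn_R(g\stab(b))$, i.e.\ how long an element of one vertex group can fellow-travel another at distance $\ge 3$ in the tree. The paper deduces $f(R) = 2R+\kappa$ from the Kapovich--Leeb projection bound: in a non-positively curved model (transferred to the general case via their biLipschitz theorem \cite[Thm.\ 1.1]{KapovichLeeb:3manifolds}), the nearest-point projection of one geometric component onto another separated by at least two flats has uniformly bounded diameter \cite{KapovichLeeb:Haken}. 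A thin-intersection heuristic points the right way, but you need this specific quantitative input (or an equivalent), and your description of the mechanism — a common torus — does not apply in the regime where $f$ is evaluated.

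Second, and more seriously, your treatment of Morse elements fixing hyperbolic vertices is only a gesture. You assert that ``a $G$-conjugacy between two such Morse elements reduces (up to an adjustment by a bounded tree-theoretic factor) to an $H$-conjugacy, for which a linear bound is classical.'' But the conjugator $g$ lives in the ambient group $G$, not in $H$; also $u$ and $v$ may fix distinct hyperbolic components, and the vertex groups are only relatively hyperbolic with $\Z^2$ peripherals, so no ``classical'' linear conjugator bound for word-hyperbolic groups is available. The paper's actual argument here is substantial: it constructs saturations $\mathrm{Sat}(A_u)$, $\mathrm{Sat}(A_v)$ of the $\mathbb{H}^3$-axes, builds uniform quasi-geodesics from the basepoint $x_0$ to $ux_0$ and $vx_0$ passing near these saturations using Lemmas 4.26 and 8.12 of \cite{DrutuSapir:TreeGraded} (i.e.\ the relative hyperbolicity of $\widetilde M$ with respect to the complements of hyperbolic components), and then adjusts $g$ by a power of $v$ (and $u,v$ by bounded powers) so that the two quasi-geodesics $\q_{x_0,vx_0}$ and $g\q_{x_0,ux_0}$ share a sub-segment of $A_v$. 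The conclusion $\dist(x_0,gx_0) \preceq \dist(x_0,ux_0)+\dist(x_0,vx_0)$ then follows. This step is not a black-box citation and is where the real work of the hyperbolic case lies.

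Your acylindricity discussion and the observation that no Morse element is conjugate into a Seifert vertex group (because those are wide) are fine and consistent with the paper's setup, though the paper itself glosses over explicit verification of acylindricity.
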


\proof Since $M$ is non-geometric, it can be cut along tori and Klein
bottles into finitely many geometric components that are either
Seifert or hyperbolic.  We will apply
Theorem~\ref{thm:conjacyltree} by considering the Bass-Serre tree $T$
associated to the geometric splitting of $M$ described before and
recalling that two elements in $\pi_1 (M)$ are Morse if and only if
they are either loxodromic elements for the action on $T$ or both
contained in a hyperbolic component of $M$/$\pi_1 (M)$.

The following proposition of Kapovich--Leeb, combined with the fact that for every non-geometric prime 3-dimensional manifold $M$ there exists a non-positively curved such manifold $N$, and a biLipschitz homeomorphism between the universal covers $\widetilde{M}$ and $\widetilde{N}$ preserving the components \cite[Theorem 1.1]{KapovichLeeb:3manifolds} implies that for $l\geq 3$ the function $f(R)$
 defined in Theorem~\ref{thm:conjacyltree} is of the form $2R + \kappa$.

\begin{prop}[\cite{KapovichLeeb:Haken}]

Let $M$ be a non-geometric prime 3-dimensional manifold admitting a non-positively curved Riemannian metric. There exists a constant
$\kappa >0$ dependent only on $M$ such that given two
geometric components $C,C'$ of $\widetilde{M}$ separated by two flats, the nearest point projection of $C'$ onto $C$ has diameter at
most $\kappa$.
\end{prop}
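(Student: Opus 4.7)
The plan is to use a sequence of CAT(0) projection reductions to move the problem from the global universal cover $\widetilde{M}$ down to a single geometric piece, where the bound can be obtained from local properties of the non-positively curved metric. Let $C''$ be the geometric component lying between $C$ and $C'$, so that $C \cap C'' = F$ and $C'' \cap C' = F'$ are the two separating flats. Since every geodesic from a point $p\in C'$ to a point of $C$ must cross $F$, and since $F$ is convex in $\widetilde M$, the first crossing is a candidate for the nearest point and so $\pi_C(p)\in F$; in particular $\pi_C(p)=\pi_F(p)$. The same argument applied inside $C''$ shows $\pi_{C''}(p)\in F'$. Combining this with the general CAT(0) composition identity $\pi_F=\pi_F\circ\pi_{C''}$, valid for the nested convex subsets $F\subset C''$, yields the key containment
\begin{equation*}
\pi_C(C')\;\subseteq\;\pi_F(F').
\end{equation*}

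The problem is thereby reduced to bounding the diameter of $\pi_F(F')$, which takes place entirely inside the single geometric component $C''$ with its induced NPC metric. For a hyperbolic component, after cusp truncation $F$ and $F'$ are lifts of two distinct boundary tori descending to horospheres in $\hip^3$ based at distinct ideal points, and their nearest-point projections onto each other are uniformly bounded by standard negatively curved geometry. For a Seifert component, one invokes the specific geometry of the Kapovich--Leeb NPC metric on the piece: the induced metric on $C''$ is arranged so that any two distinct boundary flats have nearest-point projection of diameter bounded by a constant depending only on the piece.

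Finally, since $M$ is compact its JSJ decomposition has only finitely many pieces, and the bounds from the previous step depend only on the isometry class of each piece and of the gluings; taking the supremum produces a single constant $\kappa=\kappa(M)$ depending only on $M$, as required.

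The main obstacle is the Seifert case of the projection bound inside the single piece $C''$. The subtle point is that a naive product metric $\widetilde\Sigma\times\R$ on a Seifert piece would produce a projection $\pi_F(F')=\pi_{\widetilde\gamma}(\widetilde{\gamma'})\times\R$ that is unbounded in the fiber direction. The resolution requires using the fact that Kapovich--Leeb's NPC metric on a graph manifold is carefully non-product near the boundary and interior of each Seifert piece in a way that forces distinct boundary flats to project with bounded image. Verifying this is the technical heart of the proposition, and it is precisely the hypothesis that $M$ admits a non-positively curved Riemannian metric that makes the geometric analysis in this step available.
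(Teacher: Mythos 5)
The paper does not prove this proposition — it is quoted from \cite{KapovichLeeb:Haken} — so the relevant question is whether your argument stands on its own. It does not: the Seifert case, which you flag as the technical heart and defer to properties of the Kapovich--Leeb metric, is not merely unproved but actually unprovable along the lines you sketch. If $C''$ is a Seifert piece, the generator $\alpha$ of its central fiber subgroup lies in every peripheral $\Z^2$ of $\pi_1(C'')$, hence in both edge groups at $F$ and $F'$, so $\alpha$ preserves $F$, $F'$, $C$ and $C'$ and acts on $F\cong\R^2$ by a nontrivial translation. Nearest-point projection in a $\CAT(0)$ space is isometry-equivariant, so $\pi_F(F')$ and $\pi_C(C')$ (the latter lying inside $F$, as you correctly observe) are nonempty $\alpha$-invariant subsets of the plane $F$ and are therefore \emph{unbounded}. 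This obstruction lives in the group acting, not in the metric, so no ``carefully non-product'' choice of invariant NPC metric on the Seifert piece can make these projections bounded. Consequently ``separated by two flats'' cannot be your reading of ``exactly one intermediate geometric piece'': one needs more intermediate pieces, so that the flip of base and fiber directions across successive gluings is what eventually kills the leftover $\R$-factor. Your opening reduction, which collapses everything onto the two boundary flats of a \emph{single} intermediate piece $C''$, discards exactly the structure that makes the proposition true.

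A smaller point: the ``general CAT(0) composition identity $\pi_F=\pi_F\circ\pi_{C''}$ for nested convex $F\subset C''$'' is false, already in $\R^2$ (take $C''$ a half-plane and $F$ a ray in $C''$ meeting $\partial C''$ at an oblique angle). The inclusion $\pi_C(C')\subseteq\pi_F(F')$ that you want is nevertheless correct, but it should be argued directly: for $p\in C'$ the geodesic $[p,\pi_C(p)]$ meets $F'$ at some $q'$, and since $q'$ lies on that geodesic one has $\pi_F(q')=\pi_C(q')=\pi_C(p)$.
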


This settles the case when both $u$ and $v$ are loxodromic elements.

Assume now that $u$ and $v$ both stabilize hyperbolic components.
Assume that we have fixed a basepoint $x_0$ in the universal cover
$\widetilde{M}$.  The map $g\mapsto gx_0$ is a quasi-isometry with
fixed constants depending only on the given word metric on~$G$.
Assume that $u$ stabilizes the hyperbolic component $H\subset
\widetilde{M}$ and that it acts on this component as a loxodromic
element.  We see $H$ as a subset of $\mathbb{H}^3$.  Let $A_u$ be the
geodesic axis in $\mathbb{H}^3$ on which $u$ acts by translation,
denote the translation length by~$t$.  Note that every segment of
length $t$ on $A_u$ intersects $H$.  We denote by $\mathrm{Sat} (A_u)$
the set obtained from $A_u$ by replacing its intersections with the
open horoballs that compose $\mathbb{H}^3 \setminus H$, with the
corresponding boundary horospheres.

The element $v=gug\iv$ stabilizes a hyperbolic component
$H'= gH\subset \widetilde{M}$, and there exists a geodesic axis $A_v =
gA_u$ in $\mathbb{H}^3  \supset H'$ such that $v$ acts on this axis by translation with
translation length~$t$. We define $\mathrm{Sat} (A_v)$ similarly.

Let $x_0'$ be the nearest point projection of $x_0$ on $H$, let $y_0'$
be the nearest point projection of $x_0'$ onto $A_u$ and let $y_0\in
\mathrm{Sat} (A_u)$ be either the intersection point of $[x_0', y_0']$
with a boundary horosphere if $y_0'$ is in $\mathbb{H}^3 \setminus H$,
or equal to $y_0'$ if this latter point is in $H$.  Note that $ux_0'$
will be on a different boundary horosphere than $x_0'$, and the same
for $uy_0$ and $y_0$, if $y_0$ is on a boundary horosphere.  According
to \cite[Lemma 4.26]{DrutuSapir:TreeGraded}, the geodesic
$\pg_{y_0,uy_0}$ joining $y_0$ to $uy_0$ is contained in a
$\delta$--neighborhood of $\mathrm{Sat} (A_u)$, moreover due to the
fact that every segment of length $t$ on $A_u$ intersects $H$, it
follows that $\pg_{y_0,uy_0}$ intersects the $\delta$--neighborhood of
$A_u$.

Due to the fact that the metric space
$\widetilde{M}$ is hyperbolic relative to the connected components of $\widetilde{M}\setminus \mathrm{Interior} (H)$, it follows that the concatenation of the geodesics $[x_0, x_0'], [x_0', y_0], \pg_{y_0,uy_0}, [uy_0, ux_0'], [ux_0', ux_0]$ composes an $(L,C)$--quasi-geodesic, with $L\geq 1$ and $C\geq 0$ depending only on $M$ \cite[Lemma 8.12]{DrutuSapir:TreeGraded}. We denote this quasi-geodesic $\q_{x_0 , ux_0}$. We construct in a similar manner an $(L,C)$--quasi-geodesic $\q_{x_0 , vx_0}$ joining $x_0$ and $vx_0$ and containing in its
$\delta$--neighborhood a sub-segment of the axis $A_v$. Note that the $(L,C)$--quasi-geodesic $g\q_{x_0, ux_0}$ joining $gx_0, gux_0$ contains in its $\delta$--neighborhood another sub-segment of $A_v$.  By pre-composing $g$ with a
power of $v$ and possibly replacing $u,v$ by large enough powers, we may assume that the two sub-segments of $A_v$ mentioned above are the same. In particular the $\delta$-neighborhoods
of $\q_{x_0 , vx_0}$ and of $g\q_{x_0, ux_0}$ intersect.  It follows that
$$
\dist (x_0, gx_0) \preceq \dist (x_0, vx_0) + \dist (x_0, ux_0)\, .
$$
\endproof

\begin{cor}\label{thm:conj3man}
Let $M$ be a $3$-dimensional prime manifold, and let $G$ be its
fundamental group.

For every word metric on $G$ there exists a constant $K$ such that if
two elements $u, v$ are conjugate in $G$ then there exist $g$
conjugating $u,v$ such that
$$
|g| \leq K(|u|+|v|)^2\, .
$$
\end{cor}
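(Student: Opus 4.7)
The plan is to treat the geometric and non-geometric cases separately, reducing in the non-geometric case to Corollary~\ref{thm:conjgraphmfld} when the elements are Morse and to a within-piece analysis otherwise.

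If $M$ is geometric, I would invoke classical facts about each of Thurston's eight geometries. For the hyperbolic case, $\pi_1(M)$ is hyperbolic relative to its $\Z^2$ cusp subgroups and the conjugator length is linear. For the five Seifert-fibered geometries, $\pi_1(M)$ is virtually a central $\Z$-extension of a (hyperbolic or Euclidean) surface group, and the conjugator length is at most quadratic by first conjugating in the surface quotient and then matching the central coordinate. For $\mathrm{Sol}$, a direct computation in the polycyclic group of Hirsch length $3$ yields a quadratic bound.

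Now assume $M$ is non-geometric prime, and let $T$ be the Bass-Serre tree of the geometric decomposition. Since being Morse is invariant under conjugation, there are two cases. If both $u$ and $v$ are Morse, Corollary~\ref{thm:conjgraphmfld} gives a linear conjugator bound, which is subsumed by the desired quadratic one. If neither is Morse, then each acts elliptically on $T$ (at least after taking a bounded power), hence lies in a vertex stabilizer $\pi_1(P)$, and possibly in an edge stabilizer (a peripheral $\Z^2$).

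To handle the non-Morse case I would proceed in two steps. First, apply the nearest-point-projection shortening from the proof of Theorem~\ref{thm:conjacyltree}, together with the biLipschitz $CAT(0)$ model of \cite{KapovichLeeb:3manifolds}, to replace $u,v$ by conjugates $\tilde u\in\pi_1(P_u)$ and $\tilde v\in\pi_1(P_v)$ of comparable length, paying a conjugator of length at most $C(|u|+|v|)$. Second, conjugate inside a single vertex stabilizer: Seifert stabilizers are virtual central extensions of hyperbolic surface groups and hyperbolic vertex stabilizers are relatively hyperbolic with $\Z^2$ peripherals, so in each case the conjugator problem is linear; moreover by \cite{KapovichLeeb:3manifolds} these vertex stabilizers are undistorted in $G$, so the linear internal bound translates to a linear bound in $G$. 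Pasting these pieces together gives a global bound of at most $C(|u|+|v|)$ in most configurations, and at most $C(|u|+|v|)^2$ once one accounts for the possibility that $\tilde u$ and $\tilde v$ sit in edge rather than vertex stabilizers.

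The main obstacle is this last peripheral case: when $\tilde u$ and $\tilde v$ lie in $\Z^2$ edge groups shared between adjacent pieces, the matching conjugator may have to traverse a sequence of adjacent JSJ tori, and the flat direction of the Seifert fibrations contributes an extra linear factor. This is precisely where the square appears in the final estimate and is the reason Corollary~\ref{thm:conj3man} is not a direct corollary of Corollary~\ref{thm:conjgraphmfld}.
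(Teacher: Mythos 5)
Your overall architecture matches the paper's: split into geometric and non-geometric cases, handle the Morse elements in the non-geometric case via Corollary~\ref{thm:conjgraphmfld}, and handle the non-Morse (elliptic) elements by a within-piece argument. For the geometric cases the paper is even terser than you are --- it cites a quadratic bound for Nil from \cite{JOR-conj} and a linear bound for Sol from \cite{Sale:thesis}, and dismisses the rest as ``easy''; your sketches of these cases are reasonable substitutes for the citations (saying ``quadratic'' for Sol is fine since only an upper bound is needed).

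Where you genuinely diverge from the paper, and where I think your argument has a gap, is the non-Morse non-geometric case. The paper simply observes that the only elements not covered by Corollary~\ref{thm:conjgraphmfld} are those stabilizing a Seifert component, i.e.\ lying in a subgroup which is virtually $\Z\times F_n$, and asserts that ``one can easily find a conjugator of quadratic length'' for such elements. You instead try to run the nearest-point-projection shortening from the proof of Theorem~\ref{thm:conjacyltree}. But that argument is built around the translation \emph{axes} $A_u$, $A_v$ of loxodromic elements: one slides a candidate conjugator along the axis using powers of $v$, and the acylindricity/$f(R)$ bound controls the stabilizer ambiguity. For an elliptic element there is no axis, and the ambiguity you have to control is modification of $g$ by an element $h$ in the centralizer of $u$ inside a vertex stabilizer; nothing in the Theorem~\ref{thm:conjacyltree} machinery bounds the length of such an $h$. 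So the step ``apply the nearest-point-projection shortening \dots to replace $u,v$ by conjugates $\tilde u\in\pi_1(P_u)$ and $\tilde v\in\pi_1(P_v)$ of comparable length, paying a conjugator of length at most $C(|u|+|v|)$'' is not justified by what you cite, and you would need a separate argument (e.g.\ the paper's implicit one: directly produce a quadratic conjugator inside a virtually $\Z\times F_n$ group, using that the $F_n$-conjugacy is linear and the central $\Z$ can cost an extra linear factor). Your closing paragraph correctly identifies the peripheral $\Z^2$ case as delicate --- and in fact the paper's own one-sentence dismissal is thin on exactly this point when a JSJ torus separates two hyperbolic pieces --- but you should recognize this as a gap in your argument rather than merely an ``obstacle'' to be noted.
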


\proof Assume first that $M$ is non-geometric, hence decomposable by
tori and Klein bottles into hyperbolic and Seifert components.  The
only case not covered by Corollary~\ref{thm:conjgraphmfld}
is when both $u$ and $v$ stabilize a
Seifert component, i.e., are contained in two groups which are
virtually $\Z \times F_n$. In this case one can easily find a
conjugator of quadratic length.

When $M$ is a geometric
nilmanifold, i.e., when $\pi_1 (M)$ is $2$--step
nilpotent the quadratic upper bound for a conjugator length is proved
in \cite[Proposition~2.1.1]{JOR-conj}.

When $M$ is a geometric
solmanifold, the linear upper bound for a conjugator length is proved
in \cite{Sale:thesis}.

The other geometric cases are
easy. \endproof

\subsection{Conjugators in mapping class groups}
In what follows $S$ denotes a compact oriented surface of genus $g$ and with $p$ boundary components and $\xi (S) = 3g+p-3$ denotes the complexity of the surface.

We prove linear control of the shortest
 conjugator of infinite order elements in the mapping class group by
 providing a new proof of the following result which was established by
 Masur--Minsky \cite[Theorem~7.2]{MasurMinsky:complex2} in the pseudo-Anosov case and by J.~Tao
 \cite[Theorem~B]{JTao:conj} in the reducible case.

 \begin{thm}\label{thm:shortestgen}
There exists a constant $C$ depending only on the surface $S$ and the
fixed generating set of $\MCG (S)$ such that for every two conjugate
elements of infinite order $u$ and $v$ there exists $g$ such that
$v=gug\iv$ and
$$
|g| \leq C [|u| + |v|]\, .
$$
 \end{thm}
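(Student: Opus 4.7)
The plan is to mimic the strategy of Theorem~\ref{thm:conjacyltree} and its consequence Corollary~\ref{thm:conjgraphmfld}, replacing the Bass-Serre tree by the curve complex $\CCS$ (which is hyperbolic by Masur-Minsky and on which $\MCGS$ acts acylindrically by Bowditch), and splitting into cases according to the Nielsen-Thurston type of $u$. The base case is provided by the Morse/contracting technology of Section~\ref{sect:Morsediv} applied to pseudo-Anosov axes, and the reducible case is handled by induction on the complexity $\xi(S)$.

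Case 1 (pseudo-Anosov). By Behrstock \cite{Behrstock:asymptotic} the cyclic subgroup $\langle u\rangle$ is a contracting quasi-geodesic in $\MCGS$, and hence satisfies the uniform nearest-point-projection estimates used for $\mn(u)$ and $\theta_u$ in the $CAT(0)$ argument of Section~\ref{sec:conjug}. Let $y_0$ be a point of $\langle u\rangle$ nearest to $1$ and $z_0$ a point of $\langle v\rangle=g\langle u\rangle g\iv$ nearest to $1$. The contracting property yields $|y_0|\preceq |u|$ and $|z_0|\preceq |v|$. After post-composing $g$ with a suitable power of $v$, the image $gy_0$ lies within distance $|v|+O(1)$ of $z_0$, and the triangle inequality gives $|g|\preceq |u|+|v|$, exactly as in the Hadamard-space proof of Section~\ref{sec:conjug}.

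Case 2 (reducible). Let $\sigma_u$ be the Birman-Lubotzky-McCarthy canonical reduction system of $u$; then $\sigma_v=g\sigma_u$. Any $u$-invariant multicurve sits at bounded distance from $\sigma_u$ in $\CCS$, so $d_{\CCS}(\sigma_u, g\sigma_u)\preceq |u|+|v|$. The Masur-Minsky distance formula, combined with the bounded geodesic image theorem and the observation that only subsurfaces in which $u$ acts non-trivially contribute to the subsurface projection distances between $\sigma_u$ and $g\sigma_u$, then produces an explicit conjugator $g_0$ with $|g_0|\preceq |u|+|v|$ satisfying $g_0\sigma_u=\sigma_v$. Replacing $v$ by $g_0\iv v g_0$, we may assume $u,v\in\stab(\sigma_u)$, which is virtually a direct product of mapping class groups of the components of $S\setminus\sigma_u$ with the free abelian Dehn-twist group along $\sigma_u$. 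Since each such mapping class group factor has complexity strictly less than $\xi(S)$, induction on $\xi(S)$---with the pseudo-Anosov case above and the abelian case (trivially linear) as bases---delivers the bound.

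The hard part will be Case~2, and specifically the production of the intermediate conjugator $g_0$ of linear length: one must pass from curve complex distance control to word-metric control \emph{linearly} rather than with the a priori exponential loss of a naive quasi-isometry argument. This requires the full strength of the Masur-Minsky distance formula, together with undistortion of $\stab(\sigma_u)$ inside $\MCGS$ (which in the language of this paper is a statement about quasi-convexity of the subgroups forming the tight network from Section~\ref{sec:networks}). Once this undistortion is in hand, each inductive step adds only a controlled multiplicative constant that depends on $S$ and the generating set, so the final constant $C$ is uniform.
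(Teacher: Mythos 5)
Your pseudo-Anosov case does not go through as stated, and this is a gap in kind, not just in detail. You propose to transplant the Hadamard-space argument of Section~\ref{sec:conjug}, replacing $\mn(u)$ by the cyclic subgroup $\langle u\rangle$ viewed as a contracting quasi-geodesic in $\MCG(S)$. But the two sets play different roles. The $CAT(0)$ proof hinges on the fact that $\mn(u)$ is \emph{intrinsically} defined as the minimal-displacement set, so that $g\mn(u)=\mn(v)$ when $v=gug^{-1}$, and $x_0$ has a well-controlled projection $y_0$ onto it via the constant $\theta_u$ of inequality~(\ref{eq:dxux}). In $\MCG(S)$ the analogue of $\mn(u)$ is the set $A_u$ of elements $h$ with $|h^{-1}uh|$ nearly minimal in the conjugacy class; this satisfies $A_v=gA_u$, but it is \emph{not} coarsely equal to $\langle u\rangle$ unless $u$ is already nearly shortest in its conjugacy class. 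In particular $\langle u\rangle$ and $g\langle u\rangle=\langle v\rangle g$ are parallel but need not be close, so the ``replace $g$ by $v^kg$ to align $gy_0$ with $z_0$'' step has no analogue: $g\cdot 1=g$ does not land in $\langle v\rangle$. Moreover, the $\theta_u$ estimate in (\ref{eq:dxux}) is proved using local compactness and convexity of the $CAT(0)$ space, and does not follow formally from the contracting property; one would have to derive it separately (and uniformly in $u$). The paper sidesteps both problems by working in the curve complex $\CCS$, where the axis of a pseudo-Anosov is coarsely unique (Bowditch's theorem on tight geodesics), and then controlling $\dist_{\MM(S)}$ through hierarchy paths and the Masur--Minsky distance formula~(\ref{fdistformula}), with the crucial observation that after raising $u,v$ to bounded powers the large proper domains of $(\mu_0,g\mu_0)$ split among those of $(\mu_0,v\mu_0)$ and $(g\mu_0,vg\mu_0)$.

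Your reducible case is closer in spirit to what the paper does -- canonical reduction system, quasi-convexity of $\QQ(\Delta)$ (this is Lemma~\ref{lem:betw}), and a reduction to the pseudo-Anosov pieces -- but several load-bearing steps are asserted rather than proved. The ``intermediate conjugator $g_0$ of linear length'' is exactly the content of the paper's Case~1 analysis in the reducible situation, which requires the tight-geodesic finiteness of Bowditch to bound $\dcs(\mu_0,\Delta_u)$ linearly in $\dcs(\mu_0,u\mu_0)$; this does not follow merely from ``the distance formula plus bounded geodesic image.'' The stabilizer of $\sigma_u$ is not quite ``virtually a direct product of mapping class groups of components with the twist group'' -- there is a finite permutation quotient and a short exact sequence, and you must also explain why a conjugator produced in each factor can be assembled into an honest conjugator in $\MCG(S)$ of controlled length. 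The paper avoids the induction entirely by directly comparing large domains of $(\mu_0,g\mu_0)$ with those of the two pairs $(\mu_0,v\mu_0)$ and $(g\mu_0,gu\mu_0)$ after normalizing $g$ by elements of the centralizer of $v$. So while your overall architecture (curve-complex geodesics, canonical reduction, distance formula, undistortion) overlaps the paper's, the central technical lemmas that make the constants linear are not present in your sketch, and the pseudo-Anosov base case as you have set it up would fail.
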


 It is worth noting that the mapping class group is not $CAT(0)$,
 c.f., \cite{KapovichLeeb:actions} or \cite{Bridson-Haefliger}.
 Nevertheless, there exists a natural analogue of the
 inequality (\ref{eq:dxux}) from the $CAT(0)$ setting which holds
 here; this will be explained further in the proof below.

\subsubsection*{Background}

We will use a quasi-isometric model of a mapping class group, the
\emph{marking complex}, $\MM(S)$, defined as follows.  Its vertices,
called \emph{markings}, consist of the following pair of data:
\begin{itemize}
    \item \emph{base curves}: a multicurve consisting of $\xi (S)$
    components, i.e. a maximal simplex in $\CC(S)$.  This collection
    is denoted $\base(\mu)$.

    \item \emph{transversal curves}: to each curve
    $\gamma\in\base(\mu)$ is associated an essential curve.  Letting
    $T$ denote the complexity $1$ component of $S\setminus
    \bigcup_{\alpha\in \base(\mu ), \alpha \neq \gamma}\alpha$, the
    transversal curve to $\gamma$ is a curve $t(\gamma)\in\CC(T)$ with
    $\dist_{\CC(T)}(\gamma,t(\gamma))=1$.
\end{itemize}

Two vertices $\mu,\nu$ in the marking complex $\MM(S)$ are connected by an edge if either of
the two conditions hold:

\begin{enumerate}
    \item \emph{Twists}: $\mu$ and $\nu$ differ by a Dehn twist along
    one of the base curves: $\base(\mu)=\base(\nu)$ and all their
    transversal curves agree except for $t_{\mu}(\gamma)$, obtained
    from $t_{\nu}(\gamma)$ by twisting once about the curve $\gamma$.

    \me

    \item \emph{Flips}: The base curves and transversal curves of
    $\mu$ and $\nu$ agree except for one pair $(\gamma,
    t(\gamma))\in\mu$ for which the corresponding pair in $\nu$ consists of the
    same pair but with the roles of base and transversal reversed.
\end{enumerate}

  Note that after performing one flip the new base curve may now
  intersect several other transversal curves.  Nevertheless by \cite[Lemma
  2.4]{MasurMinsky:complex2}, there is a finite set of natural ways to
  resolve this issue which, in turn, yields a uniformly bounded on the diameter of possible markings which can be obtained by flipping the pair $(\gamma,
  t(\gamma))\in\mu$; an edge connects each of these possible flips to
  $\mu$.

\begin{thm}[\cite{MasurMinsky:complex2}]\label{MM:marking} The graph
$\MM(S)$ is locally finite and the mapping class group acts
cocompactly and properly discontinuously on it.  In particular, the
orbit map yields a quasi-isometry from $\MCG(S)$ to $\MM(S)$.
\end{thm}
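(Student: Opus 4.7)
The plan is to verify the three asserted properties of $\MM(S)$ in turn (local finiteness, cocompactness of the $\MCG(S)$-action, proper discontinuity), then invoke the \v Svarc--Milnor lemma to get the quasi-isometry. Along the way I need to record that $\MM(S)$ is connected, since \v Svarc--Milnor needs a geodesic (or at least connected length) space.

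For \textbf{local finiteness}, I would bound the valence at a fixed marking $\mu$. A marking consists of $\xi(S)$ base--transversal pairs, so there are exactly $\xi(S)$ candidates for which base curve to Dehn twist and, for each one, only two twist edges (the twist and its inverse). For flip edges there are again $\xi(S)$ pairs $(\gamma,t(\gamma))$ that can be flipped; the key point is that once one swaps the roles of $\gamma$ and $t(\gamma)$ the \emph{new} transversals to the other base curves may fail the intersection condition, but Lemma~2.4 of \cite{MasurMinsky:complex2} (already quoted in the excerpt) asserts that only finitely many combinatorial resolutions occur. So $\mu$ has finite valence, uniformly bounded in terms of $S$.

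For \textbf{cocompactness} I would apply the change-of-coordinates principle for surfaces: any two pants decompositions of $S$ are carried onto each other by some element of $\MCG(S)$, so there are only finitely many $\MCG(S)$-orbits of bases. For a fixed base $P$, the transversal to each pants curve $\gamma$ lives in the complexity-one subsurface $T\subset S\setminus\bigcup_{\alpha\neq\gamma}\alpha$ and must have distance $1$ from $\gamma$ in $\CC(T)$; the stabilizer of $P$ in $\MCG(S)$ acts on the product of these one-neighborhoods of $\gamma\in\CC(T)$ with finitely many orbits on each factor, because each $\CC(T)$ is a Farey-type graph on which the relevant mapping class subgroup acts with finitely many orbits on balls. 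Combining these, $\MCG(S)$ has finitely many orbits on the vertex set of $\MM(S)$, hence on $\MM(S)$ itself (edges fall into finitely many orbits by the same accounting used for local finiteness).

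For \textbf{proper discontinuity} I need stabilizers of markings to be finite. The key observation is that a marking $\mu$ records a pants decomposition together with a ``twisting parameter'' (the transversal) in every complexity-one piece, so a mapping class fixing $\mu$ must preserve each pants curve and each transversal up to isotopy. This forces the mapping class to lie in the intersection of the twist subgroup generated by the Dehn twists about base curves with the stabilizer of all transversals, which is a finite subgroup (for instance by Alexander's trick on each pair of pants together with the hyperelliptic-type involutions on the one-holed tori/four-holed spheres). Since $\MM(S)$ is locally finite and the vertex stabilizers are finite, the action is properly discontinuous.

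Before closing I would verify \textbf{connectivity} of $\MM(S)$: given two markings $\mu,\nu$ one first moves $\base(\mu)$ to $\base(\nu)$ through the pants graph (connected by Hatcher--Thurston, or inductively from connectedness of the curve complex) lifting each pants move to a bounded sequence of flips in $\MM(S)$, and then adjusts transversals by Dehn twists. With connectivity, local finiteness, cocompactness and proper discontinuity in hand, the \v Svarc--Milnor lemma gives that every orbit map $\MCG(S)\to\MM(S)$ is a quasi-isometry, finishing the proof. The step I expect to be most delicate is the cocompactness/finite-stabilizer analysis of the transversal data, since it requires a careful bookkeeping in each complexity-one subsurface and the interaction with the multi-twist group on the base; everything else is essentially combinatorial counting plus a standard \v Svarc--Milnor invocation.
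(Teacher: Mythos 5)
Your argument is correct and is essentially the standard proof of this statement, which the paper does not prove itself but simply quotes from Masur--Minsky (it is their Lemma~2.5 area material in \cite{MasurMinsky:complex2}): bounded valence from the finitely many twist and flip moves, cocompactness from change of coordinates plus the finitely many orbits of transversal data in each complexity-one piece, finite marking stabilizers, connectivity via the pants graph, and \v{S}varc--Milnor. No gaps worth flagging beyond the minor point that a mapping class stabilizing a marking may permute the base curves, which does not affect finiteness of the stabilizer.
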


Given a simplex $\Delta$ in the curve complex $\CC(S)$, we define
$\QQ(\Delta)$ to be the set of elements of $\MM(S)$ whose bases
contain $\Delta$. We recall that there is a coarsely defined
closest point projection map from $\MM(S)$ to $\QQ(\Delta)$ which is
coarsely Lipschitz.

\subsubsection*{Proof of Theorem~\ref{thm:shortestgen}}

We assume that $S$ is a surface with $\xi (S) >1$, otherwise the mapping class
  group is hyperbolic and the result is standard. We make use of two cocompact actions of  $\MCG(S)$: the above mentioned properly discontinuous action on the marking complex $\MS$ and an action far from properly discontinuous on the
 curve complex $\CCS$. Neither of the two complexes $\CCS$ nor $\MS$
 are $CAT(0)$.

We begin with the case of two conjugate pseudo-Anosov elements. Our goal is to find a natural analogue of the
 inequality (\ref{eq:dxux}) from the $CAT(0)$ setting. The difficulty is that a pseudo-Anosov element is loxodromic with a translation axis in $\CCS$, which makes it hard to find an appropriate definition of a projection of an element in $\MS$ to it.

 Let $k$ be a
  pseudo-Anosov.  According to \cite[Theorem 1.4]{Bowditch:tightgeod},
  there exists $m=m(S)$ such that $k^m$ preserves a bi-infinite
  geodesic $\pg_{k}$ in $C(S)$.  For every curve $\gamma$ denote by $
  \gamma'$ a closest point to it on $\pg_{k}$. Let $\widehat{k} = k^m\, $. A standard hyperbolic
  geometry argument implies that for every $i\geq 1$
\begin{equation}\label{eq:transl}
\dcs (\gamma , \widehat{k}^{i} \gamma )\geq \dcs (\gamma' , \widehat{k}^{i} \gamma' ) +
O(1)\geq i+O(1)\, .
\end{equation}

Let $\mu$ be an arbitrary element in $\MS$ and let $\gamma$ be a closest point to $\pcs (\mu)$ on $\pg_{k}$.  A hierarchy path $\fh$ joining
$\mu$ and $\widehat{k}\mu$ contains two points $\nu , \nu'$ such that:
\begin{itemize}
  \item the subpath with endpoints $\mu , \nu$ is at $\CCS$--distance
  $O(1)$ from any $\CCS$--geodesic joining $\pcs (\mu)$ and $\gamma$;

  \me

  \item the subpath with endpoints $\widehat{k}\mu , \nu'$ is at $\CCS$--distance
  $O(1)$ from any $\CCS$--geodesic joining $\pcs (\widehat{k}\mu)$ and $\widehat{k}\gamma$;

  \me

  \item if the translation length of $\widehat{k}$ along $\pg_{k}$ is large enough then the subpath with
  endpoints $\nu , \nu'$ is at $\CCS$-distance $O(1)$ from $\pg_{k}$;

  \me

  \item $\dcs (\nu' , \widehat{k}\nu) $ is $O(1)$.
\end{itemize}

Note that by equation~(\ref{eq:transl}) there exists an integer $N=N(S)$ such
that for every pseudo-Anosov $k$ the power $k^N$ preserves a
bi-infinite geodesic $\pg_{k}$ in $\CCS$, and every subpath with endpoints
$\nu , \nu'$ defined as above is at $\CCS$-distance $O(1)$ from
$\pg_{k}$.

The group $\MCG(S)$ acts
co-compactly on $\MM(S)$, therefore there exists a compact subset $K$ of $\MM
(S)$ such that $\MCG(S) K= \MM (S)$. We pick a basepoint $\mu_0$ in $K$.
The map $\MCG (S) \to \MM (S)\, ,\, g \mapsto g\mu_0$ is a
quasi-isometry, by Theorem~\ref{MM:marking}.

Let $u$ and $v$ be an arbitrary pair of pseudo-Anosovs for which there
exists $g\in \MCG(S)$ such that $v= gug\iv$. Our goal is to prove that for an appropriate choice of $g$,
 $\dist (\mu_0 , g\mu_0)$ is controlled by a linear function of $\dist (\mu_0, u\mu_0 ) + \dist (\mu_0, v\mu_0 )$.

If $\pg_u$ and $\pg_v$
are axes in $\CC(S)$ defined as above then $\pg_v= g\pg_u\, $.  Up to
replacing $u$ and $v$ by their $N$-th powers, we may assume that both
preserve their respective axes $\pg_u$ and $\pg_v$, and every subpath
with endpoints $\nu , \nu'$ defined as above is at $\CC(S)$--distance
$O(1)$ from $\pg_u$, respectively $\pg_v$.

Let $\fh$ be a hierarchy path joining $\mu_0$ and $u\mu_0$ and let
$\nu$ and $\nu'$ be two points on it defined as above.  Then $g\fh$ is
a hierarchy path joining $g\mu_0$ and $gu\mu_0= vg\mu_0$ and $g\nu$,
$g\nu'$ satisfy similar properties for the path $g\fh$, the
pseudo-Anosov $v$ and its axis $\pg_v= g\pg_u\, $.

Now let $\fk$ be a hierarchy path joining $\mu_0$ and $v\mu_0$ and let
$\xi$ and $\xi'$ be the two points on it defined as above.

By eventually replacing $g$ with $v^kg$, for an appropriate $k\in \Z$,
we may assume that $g\nu$ and $\xi$ are at $\CC(S)$ distance at most
$t_v+ O(1)\, ,$ where $t_v$ is the translation length of $v$ along
$\pg_v\, $. There are two
cases to discuss.  In order to define the necessary parameters, recall
the following result.

\begin{thm}[Masur--Minsky;
\cite{MasurMinsky:complex2}]\label{distanceformula} If $\mu,\nu\in
\MM(S)$, then there exists a constant $K(S)$, depending only on $S$,
such that for each $K>K(S)$ there exists $a\geq 1$ and $b\geq 0$ for
which:
    \begin{equation}\label{fdistformula}
     \dist_{\MM(S)}(\mu,\nu) \approx_{a,b} \sum_{Y\subseteq S} \Tsh
     K{\dist_{\CC(Y)}(\pi_{Y}(\mu),\pi_{Y}(\nu))}.
\end{equation}
\end{thm}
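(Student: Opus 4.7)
The plan is to prove the coarse equivalence in two directions, both driven by the behavior of subsurface projections $\pi_Y$ under edges of $\MS$.

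First I would prove the lower bound $\dist_{\MM(S)}(\mu,\nu) \succeq \sum_{Y\subseteq S} \Tsh{K}{\dist_{\CC(Y)}(\pi_Y(\mu),\pi_Y(\nu))}$. The starting input is that subsurface projection is coarsely Lipschitz along edges of $\MS$: a single twist or flip changes $\pi_Y(\mu)$ in $\CC(Y)$ by a bounded amount for every subsurface $Y$. To avoid overcounting when summing over many $Y$'s, I would invoke the Behrstock inequality: whenever $Y\pitchfork Z$ and both of $\dist_{\CC(Y)}(\mu,\nu)$, $\dist_{\CC(Z)}(\mu,\nu)$ are large, one of $\pi_Y(\partial Z)$, $\pi_Z(\partial Y)$ is uniformly close to $\pi_Y(\mu)$ or $\pi_Z(\mu)$. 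Consequently, along a geodesic in $\MS$ the ``active'' subsurfaces whose projection distance exceeds the threshold can be essentially linearly ordered by the step at which they accumulate their projection, and a greedy accounting then shows that the contributions from different subsurfaces above threshold are additive up to multiplicative constants.

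The reverse inequality $\dist_{\MM(S)}(\mu,\nu) \preceq \sum_Y \Tsh{K}{\dist_{\CC(Y)}(\pi_Y(\mu),\pi_Y(\nu))}$ requires constructing an efficient path in $\MS$ from $\mu$ to $\nu$, and this is where the substance of the argument lies. I would build a hierarchy of tight geodesics: a tight geodesic $g_S$ in $\CCS$ joining $\base(\mu)$ to $\base(\nu)$, and then recursively, for every subsurface $Y$ appearing as a component domain of a simplex along an already-built geodesic, a tight geodesic $g_Y$ in $\CC(Y)$ connecting the appropriate projections. Two structural facts drive the count. The bounded geodesic image theorem says that any geodesic in $\CC(S)$ avoiding a sufficiently large neighborhood of $\partial Y$ projects to a bounded subset of $\CC(Y)$; this ensures that every $Y$ with $\dist_{\CC(Y)}(\pi_Y(\mu),\pi_Y(\nu)) > K$ actually shows up as a domain in the hierarchy and contributes $|g_Y|\asymp \dist_{\CC(Y)}(\pi_Y(\mu),\pi_Y(\nu))$. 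Then a resolution procedure walks through the hierarchy one elementary move at a time and converts it into a concrete sequence of markings of total length comparable to $\sum_Y|g_Y|$.

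The main obstacle is developing the hierarchy machinery itself: the existence and bounded ambiguity of tight geodesics in each $\CC(Y)$, the bounded geodesic image theorem, and the inductive bookkeeping that identifies the set of component domains of the hierarchy with, up to threshold, the set of subsurfaces on which $\pi_Y(\mu)$ and $\pi_Y(\nu)$ are far apart. These tools all rely on the hyperbolicity of each $\CC(Y)$ together with a careful combinatorial analysis of how tight geodesics and their component domains nest; the Behrstock inequality used in the lower bound reappears here to control which domains can be simultaneously active. Once the hierarchy is built and resolved, the upper bound on length matches the lower bound from the first step, yielding the coarse equality.
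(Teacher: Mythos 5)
The paper does not prove this statement; it is quoted verbatim as Theorem 6.12 of Masur--Minsky \cite{MasurMinsky:complex2}, so there is no internal proof to compare against. Your outline faithfully reproduces the architecture of the original argument: the lower bound from the coarse Lipschitz behavior of $\pi_Y$ under elementary moves together with a consistency/ordering statement controlling overcounting (you phrase this via the Behrstock inequality, which is equivalent to Masur--Minsky's time-order lemma for component domains), and the upper bound from building a complete hierarchy of tight geodesics, using the bounded geodesic image theorem and the large link lemma to match its component domains with the above-threshold subsurfaces, and resolving it into a marking path of the right length. This is the correct and standard proof strategy, with the understanding that the hierarchy machinery you defer to is the substantive content of \cite{MasurMinsky:complex2}.
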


In particular this implies that there exists $\kappa>0$ and $A,B$
depending only on $S$ such that if $\dist_{\MM(S)} (\mu, \nu) \geq
\kappa \dcs (\mu, \nu)\, $ then
    \begin{equation}\label{fdistformulaprop}
     \dist_{\MM(S)}(\mu,\nu) \approx_{A,B} \sum_{Y\subsetneq S} \Tsh
     K{\dist_{\CC(Y)}(\pi_{Y}(\mu),\pi_{Y}(\nu))}.
\end{equation}

\me

In the formulas above we use the following notation. For two numbers $d\geq 0$ and $K\geq 0$, $\Tsh K {d}$ is equal to $d$ if $d\geq K$, and it is zero otherwise.

\me

The subsurfaces that appear in \eqref{fdistformulaprop} for a given pair $\mu , \nu$ and a given constant $K>K(S)$ are called $K$--\emph{large domains} of that pair. The proper subsurfaces are called $K$--\emph{large proper domains}. We omit $K$ when irrelevant.

\me

\noindent \emph{Case 1.} Assume that $\dist_{\MM(S)} (\mu_0, g\mu_0) \leq
\kappa \dcs (\mu_0, g\mu_0)\, $.

Note that $\dcs (\mu_0, g\mu_0) \leq \dcs (\mu_0, \xi )+ \dcs (\mu_0,
\nu ) + t_v+ O(1)\leq 2 \dcs (\mu_0 , v\mu_0 ) + \dcs (\mu_0, u
\mu_0)+ O(1)$, which implies that $\dist_{\MM(S)} (\mu_0,
g\mu_0)\leq_{A',B'}\dist (\mu_0, u\mu_0) + (\mu_0, v\mu_0)\, $, where $A', B'$ depend on $\kappa, a,b$ and $K$ from Theorem \ref{distanceformula}.

\me

\noindent \emph{Case 2.} Assume that $\dist_{\MM(S)} (\mu_0, g\mu_0) \geq
\kappa \dcs (\mu_0, g\mu_0)\, $.

This together with equation~(\ref{fdistformulaprop}) then implies:
\begin{equation}\label{eq:domains}
 \dist_{\MM(S)} (\mu_0, g\mu_0) \approx_{A,B} \sum_{Y\subsetneq S} \Tsh
 K{\dist_{\CC(Y)}(\pi_{Y}(\mu_0),\pi_{Y}(g\mu_0))}\, .
\end{equation}

    Recall that the point nearest to $\pcs (\mu_0)$ on the axis
    $\fg_v$ is at $\CC (S)$--distance $O(1)$ from $\xi$ (actually,
    this may not be a point, but since $\CC(S)$ is hyperbolic the
    set of closest points form a
    bounded diameter; hence we abuse notation slightly, as this set is
    coarsely a point).  Likewise the point nearest to $\pcs (g\mu_0)$
    on $\fg_v$ is at $\CC (S)$--distance $O(1)$ from $g\nu$ and at
    distance $t_v + O(1)$ from $\xi$.  Every proper subsurface $Y$
    appearing in the sum (\ref{eq:domains}) has the property that
    every hierarchy path joining $\mu_0$ to $g\mu_0$ intersects
    $Q(\partial Y)$.  Hence $\partial Y$ is at $\CC (S)$--distance
    $O(1)$ from the union of two geodesics in $\CC (S)$ joining $\pcs
    (\mu_0)$ respectively $\pcs (g\mu_0)$ to their nearest points on
    $\fg_v\, $ with the arc of $\fg_v$ with endpoints these two
    nearest points.  It follows that a nearest point to $\pcs
    (\partial Y)$ on the axis $\fg_v$ is at $\CC (S)$--distance at
    most $t_v + O(1)$ from $\xi$.

    The analogue of equation~(\ref{eq:domains}) is also satisfied by $v\mu_0$
    and $vg\mu_0 = gu\mu_0\, $.  In particular for every proper
    subsurface $Y'$ appearing in that formula, a nearest point to $\pcs (\partial Y')$ on the axis $\fg_v$ is at $\CC (S)$--distance at most $t_v + O(1)$ from $v\xi$.  Then by replacing both $u$ and $v$ with
    their $J$-th power, for some $J=J(S)$ and arguing with the
    corresponding hierarchy paths $\fh$ joining $\mu_0$, $u^J(\mu_0)$,
    respectively $\mu_0$, $v^J(\mu_0)$, we may assume that the pairs
    $\mu_0, g\mu_0$ and respectively $v\mu_0$, $vg\mu_0$ have no large proper domain in common.

    It follows that for every large proper domain $Y$ of the pair
    $\mu_0, g\mu_0$,
    $$\dist_{\CC(Y)}(\pi_{Y}(\mu_0),\pi_{Y}(g\mu_0))\leq
    \dist_{\CC(Y)}(\pi_{Y}(\mu_0),\pi_{Y}(v\mu_0))+
    $$
    $$
    \dist_{\CC(Y)}(\pi_{Y}(g\mu_0),\pi_{Y}(vg\mu_0))+K\, .$$

    In particular for $K>K(S)$, where $K(S)$ is the constant from Theorem \ref{distanceformula}, if we consider $Y$ a $3K$--large proper domain for the pair $\mu_0, g\mu_0$, it must be a $K$--large proper domain either for $\mu_0, v\mu_0$ or for $g\mu_0 , vg\mu_0$ or for both pairs. We may then write that
    $$
\sum_{Y\subsetneq S} \Tsh
     {3K}{\dist_{\CC(Y)}(\pi_{Y}(\mu_0),\pi_{Y}(g\mu_0))}\leq 3 \sum_{Y\subsetneq S} \Tsh
     K{\dist_{\CC(Y)}(\pi_{Y}(\mu_0),\pi_{Y}(v\mu_0))} +
     $$
     $$3 \sum_{Y\subsetneq S} \Tsh
     K{\dist_{\CC(Y)}(\pi_{Y}(g\mu_0),\pi_{Y}(gu\mu_0))}
    $$
     whence
    $$
  \dist_{\MM(S)} (\mu_0, g\mu_0) \preceq_{A',B'} \dist_{\MM(S)} (\mu_0, v\mu_0) + \dist_{\MM(S)}
  (\mu_0, u\mu_0) \, .
    $$

    This completes the proof of Theorem~\ref{thm:shortestgen} for pairs of pseudo-Anosov elements.  We now proceed to the full proof of Theorem~\ref{thm:shortestgen}.

\proof Let $u,v\in\MM$.  By replacing $u,v$ with sufficiently high
powers $u^N, v^N$, where $N=N(S)$, we may assume that the two elements are pure.

\begin{lem}\label{lem:betw}
Let $\nu$ and $\rho$ be two points in $\MM(S)$, let $\Delta$ be a
multicurve, and let $\nu' , \rho' $ be respective nearest point
projections of $\nu , \rho $ on $\QQ(\Delta)$.  Assume there exist
$U_1,...,U_k$ subsurfaces such that $\Delta = \partial U_1 \cup
...\cup \partial U_k$, and $\dist_{C(U_i)} (\nu , \rho ) >M$ for every
$i=1,...,k$, where $M=M(S)$ is a large enough constant.

Then for every $\fh_1$, $\fh_2$ and $\fh_3$ hierarchy paths joining
$\nu , \nu'$ respectively $\nu' , \rho' $ and $\rho' , \rho$, the path
$\fh_1\sqcup \fh_2 \sqcup \fh_3$ has length $\approx_{a,b} \dist_{\MM
(S)} (\nu , \rho )$, where $a,b$ depend only on the topological type
of $\Delta$.
\end{lem}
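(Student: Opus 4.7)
\medskip\noindent\textbf{Proof plan for Lemma \ref{lem:betw}.} The $\succeq$ direction is immediate from the triangle inequality, combined with the fact that a hierarchy path between two markings has length $\approx$ to their $\MM(S)$-distance. For the $\preceq$ direction, the plan is to use the Masur--Minsky distance formula (Theorem \ref{distanceformula}) to reduce the comparison of $\dist_{\MM(S)}(\nu,\nu') + \dist_{\MM(S)}(\nu',\rho') + \dist_{\MM(S)}(\rho',\rho)$ with $\dist_{\MM(S)}(\nu,\rho)$ to an analysis of subsurface projections $\pi_Y$, subsurface by subsurface. For each proper $Y\subsetneq S$, I would show that the contribution to the left-hand sum is dominated (up to additive and multiplicative constants depending only on the topological type of $\Delta$) by the contribution to the right-hand sum.

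The argument splits into cases according to the position of $Y$ relative to $\Delta$. \emph{Case A: $Y$ is disjoint from $\Delta$}, i.e.\ $Y$ lies in a component of $S\setminus\Delta$. Here the standard properties of the coarse nearest-point projection to $\QQ(\Delta)$ give $\pi_Y(\nu')\approx\pi_Y(\nu)$ and $\pi_Y(\rho')\approx\pi_Y(\rho)$ up to a uniform additive error, so the projection distances from $\nu$ to $\nu'$ and from $\rho$ to $\rho'$ are uniformly bounded in $\CC(Y)$, and $\dist_{\CC(Y)}(\nu',\rho')\approx\dist_{\CC(Y)}(\nu,\rho)$. \emph{Case B: $\partial Y$ intersects $\Delta$ essentially.} Then $\partial Y$ overlaps some $\partial U_i\subset\Delta$, and since $\base(\nu')\supset\Delta$ and $\base(\rho')\supset\Delta$, both $\pi_Y(\nu')$ and $\pi_Y(\rho')$ are coarsely equal to $\pi_Y(\partial U_i)$, so $\dist_{\CC(Y)}(\nu',\rho')=O(1)$. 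For the outer two terms, apply Behrstock's inequality to the overlapping pair $Y,U_i$: if both $\dist_{\CC(Y)}(\nu,\partial U_i)$ and $\dist_{\CC(Y)}(\rho,\partial U_i)$ exceeded the Behrstock constant $B_0$, then $\pi_{U_i}(\nu)$ and $\pi_{U_i}(\rho)$ would each lie within $B_0$ of $\pi_{U_i}(\partial Y)$, forcing $\dist_{\CC(U_i)}(\nu,\rho)\le 2B_0$. Choosing $M>2B_0$ rules this out, so at most one of the two quantities is large; say $\pi_Y(\rho)$ is close to $\partial U_i$, making $\dist_{\CC(Y)}(\rho,\rho')=O(1)$, while $\dist_{\CC(Y)}(\nu,\nu')\approx\dist_{\CC(Y)}(\nu,\partial U_i)\approx\dist_{\CC(Y)}(\nu,\rho)$ up to additive error. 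In every case the sum of the three left-hand $\CC(Y)$ distances is at most $\dist_{\CC(Y)}(\nu,\rho)+O(1)$, and a subsurface contributing to the left-hand sum (above the threshold $K$) also contributes to the right-hand sum after adjusting $K$ by a bounded amount.

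The case $Y=S$ is handled separately via the bounded geodesic image theorem: since $\dist_{\CC(U_i)}(\nu,\rho)>M$ for every $i$, any $\CC(S)$-geodesic from $\pi_S(\nu)$ to $\pi_S(\rho)$ passes within uniformly bounded distance of each $\partial U_i$, hence lies close to $\Delta$. As $\pi_S(\nu')$ and $\pi_S(\rho')$ lie close to $\Delta$, one obtains $\dist_{\CC(S)}(\nu,\nu')+\dist_{\CC(S)}(\nu',\rho')+\dist_{\CC(S)}(\rho',\rho)\le\dist_{\CC(S)}(\nu,\rho)+O(1)$.

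Summing the per-subsurface estimates over all $Y$ (and using the distance formula in the form \eqref{fdistformula}, with the threshold $K$ taken sufficiently large compared to $B_0$ and the additive constants), one gets $\dist_{\MM(S)}(\nu,\nu')+\dist_{\MM(S)}(\nu',\rho')+\dist_{\MM(S)}(\rho',\rho)\preceq_{a,b}\dist_{\MM(S)}(\nu,\rho)$, where $a,b$ depend only on the topological type of $\Delta$ (through the number $k$ of subsurfaces $U_i$, and through the Behrstock and distance-formula constants attached to $S$). The main technical obstacle is the Behrstock dichotomy in Case B: it is what prevents the three hierarchy segments from doubling back across $\QQ(\Delta)$ and producing a shortcut, and it is precisely where the hypothesis $\dist_{\CC(U_i)}(\nu,\rho)>M$ is used.
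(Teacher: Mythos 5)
Your proof is correct, but it takes a genuinely different route from the paper's. The paper disposes of this lemma in one sentence, invoking a limiting argument from \cite[Lemma~4.27]{BehrstockDrutuSapir:mcg} and \cite[Theorem~4.16]{BehrstockDrutuSapir:mcg}; those are asymptotic-cone statements (about the median/tree-graded structure of the cone of $\MM(S)$ and the geometry of the limits of the $\QQ(\Delta)$), and the ``limiting argument'' converts the qualitative cone statement into the quantitative inequality by a standard compactness/contradiction scheme. Your approach is instead entirely finitary: you compare both sides of the estimate subsurface-by-subsurface via the Masur--Minsky distance formula, splitting on whether $\partial Y$ meets $\Delta$, and using Behrstock's inequality (with $M > 2B_0$) in the overlapping case to show that at most one of the two outer legs picks up a large $\CC(Y)$-contribution, with the $Y=S$ term handled by bounded geodesic image. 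The per-subsurface inequality $\dist_{\CC(Y)}(\nu,\nu') + \dist_{\CC(Y)}(\nu',\rho') + \dist_{\CC(Y)}(\rho',\rho) \le \dist_{\CC(Y)}(\nu,\rho) + O(1)$, followed by the usual threshold-adjustment trick, does give the $\preceq$ direction, and the $\succeq$ direction is the triangle inequality together with the fact that hierarchy paths are uniform quasi-geodesics. The trade-off: your proof is longer and re-derives the needed projection estimates from scratch, but it is self-contained, yields explicit constants (in terms of $B_0$, the BGI constant, and the distance-formula constants), and makes the precise role of the hypothesis $\dist_{\CC(U_i)}(\nu,\rho) > M$ visible; the paper's proof is much shorter but opaque unless one unpacks the cited results and runs the compactness argument, and by its nature produces no explicit constants. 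One small point worth making explicit when you write this up: in your Case A, ``$Y$ disjoint from $\Delta$'' should be read as ``$\partial Y$ can be isotoped off $\Delta$,'' so that annuli with core in $\Delta$ are included (for those the transversal coefficient is preserved by the coarse projection to $\QQ(\Delta)$, so the same estimate holds), and in Case B one should note that $\partial Y$ essentially crossing some $\alpha\in\partial U_i$ does rule out $Y\subseteq U_i$ and $U_i\subseteq Y$, so Behrstock's inequality applies to the pair $(Y,U_i)$.
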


\proof This follows by a limiting argument from
\cite[Lemma 4.27]{BehrstockDrutuSapir:mcg} and
\cite[Theorem 4.16]{BehrstockDrutuSapir:mcg}.\endproof

Let $\Delta_u$ be a multicurve such that if $U^1,...,U^m$ are the
connected components of $S\setminus \Delta_u$ and the annuli with core
curve in $\Delta_u$ then $u$ is a pseudo-Anosov on $U^1,...,U^k$ (Dehn
twists are assumed to be pseudo-Anosovs on annuli) and the identity
map on $U^{k+1},..., U^m$, and $\Delta_u= \partial U^1\cup...\cup
\partial U^k$ (the latter condition may be achieved by deleting the
boundary between two components on which $u$ acts as identity).
Similarly, for $v$ we consider the multicurve $\Delta_v$ and
$V^1,...,V^m$.

 Then $g\Delta_u=\Delta_v$ and $gU_i = V_i$, up to reordering $V^1,...,V^m$.

 Let $\nu$ and $\xi$ be nearest point projections of $\mu_0$ onto
 $\QQ(\Delta_u)$ and respectively $\QQ(\Delta_v)$. By
 eventually replacing $u,v$ with large enough powers we may assume that Lemma
 \ref{lem:betw} applies to the pairs $\mu_0, u\mu_0$ and $\mu_0,
 v\mu_0$, respectively.

Like for pseudo-Anosovs, we have two cases.

\me

\noindent {\bf Case 1.} Assume that
$$\dist_{\MM(S)}(\mu_0 , g\mu_0) \approx_{A,B} \sum_{Y\subseteq S,
Y\pitchfork \Delta_v}
    \Tsh K{\dist_{\CC(Y)}(\pi_{Y}(\mu_0),\pi_{Y}(g\mu_0 ))}\, .
    $$

    Then the same relation is true for $v\mu_0 , vg\mu_0$.  By
    eventually replacing $v$ by a power of itself we may assume that the
    set of proper domains appearing in the sum above is disjoint from the
    corresponding set of proper domains for $v\mu_0 , vg\mu_0$.  It follows
    that all the large proper domains for the pair $\mu_0 , g\mu_0$ are large proper domains
    either for $\mu_0, v\mu_0$ or for $g\mu_0, vg\mu_0= gu\mu_0\, $.

    We discuss the case of the whole surface $S$ separately. Consider a tight geodesic $\pg_u$ in $\CC (S)$ joining $\pcs (\mu_0 )$ to $\Delta_u\, $. We state that by replacing $u$ with a large enough power we may ensure that points on $u\pg_u$ at $\delta$-distance from $\pg_u$ are at distance at most $D$ from $\Delta_u$, for some $D>0$, where $\delta>0$ is the hyperbolicity constant of $\CC (S)\, $. Indeed assume that there exists a point $a$ on $\pg_u$ at distance at least $D$ from $\Delta_u$ such that $B(a,\delta )$ intersects $u^i \pg_u$ for $i\in \{1,2,...,N\}$. This in particular implies that $\dist_{\CC (S)} (a, u^i a) \leq 2\delta $.

    Let $b$ be the point on $\pg_u$ at distance $\frac{D}{2}$
    from $\Delta_u$.  By the above, every $u^i b$ is in $B(b,
    2\delta )$.  On the other hand, by \cite[Theorems 1.1 and
    1.2]{Bowditch:tightgeod} if $D\geq D_0 (S, \delta )$ then there
    exists $m=m(S,\delta )$ such that $B(b, 2\delta )$ contains at
    most $m$ points from the union of tight geodesics $\pg_u \cup
    \bigcup_{i=1}^N u^i \pg_u\, $.  It follows in particular that if
    $N=m+1$ then there exist $i< j$ with $i,j\in \{1,2,...,N\}$ such
    that $u^i b = u^j b$, hence $u^{j-i} b=b \, $.  But,
    for $D$ large enough $b$ together with any curve from $\Delta_u$
    fills the surface, hence it cannot be fixed by a power of $u$.  We
    obtained a contradiction.  Thus we conclude that there exists
    $k\leq N = N(S, \delta )$ such that the intersection of the
    $\delta$-neighborhood of $\pg_u$ with the $\delta$-neighborhood of
    $u^k \pg_u$ is contained in the $D$--neighborhood of $\Delta_u$.

    Likewise we argue that given a tight geodesic $\pg_v$ in $\CC (S)$ joining $\pcs (\mu_0 )$ to $\Delta_v\, $ there exists $r\leq N$ such that the intersection of the $\delta$-neighborhood of $\pg_v$ with the $\delta$-neighborhood of $v^r \pg_v$ is contained in the $D$--neighborhood of $\Delta_v$. We deduce that $\dcs (\mu_0 , \Delta_u )\leq \dcs (\mu_0 , u^k \mu_0 ) + O(1) \leq k \dcs (\mu_0 , u \mu_0 ) + O(1)$ and that $\dcs (\mu_0 , \Delta_v )\leq r \dcs (\mu_0 , v \mu_0 ) + O(1)$. Then $\dcs (\mu_0 , g\mu_0 ) \leq \dcs (\mu_0 , \Delta_v )+ \dcs (\Delta_v , g\mu_0 )\leq N[\dcs (\mu_0 , v \mu_0 ) + \dcs (\mu_0 , u \mu_0 ) ] + O(1) $.

\me

\noindent {\bf Case 2.} Assume that
$$\dist_{\MM(S)}(\mu_0 , g\mu_0) \approx_{A,B} \sum_{Y\subseteq S,
Y\notpitchfork \Delta_v}
    \Tsh K{\dist_{\CC(Y)}(\pi_{Y}(\mu_0),\pi_{Y}(g\mu_0))}\, .
    $$ In other words $\dist_{\MM(S)}(\mu_0 , g\mu_0) \approx_{A',B'}
    \dist_{\MM(S)}(\xi , g\nu)$. Note that it suffices to bound
    $\dist_{\MM(S)}(\xi , g\nu)$ by a multiple of $\dist_{\MM(S)}(\xi
    , v\xi)+ \dist_{\MM(S)}(g\nu , vg\nu)$.  Up to replacing $g$ by
    $zg$ for some element in the centralizer of $v$ we may assume that
    $\xi$ and $g\nu$ have projections on $\MM (U^j)\, ,\, k+1 \leq
    j\leq m \, ,$ at bounded distance.  Thus any large domain $Y$ for $\xi
    , g\nu$ must satisfy $Y\subseteq U^j$ for some $j$ in $\{
    1,2,...,k \}$.

For every $j\in \{1,2,...,k\}$ recall that $v$ restricted to
$V^j$ coincides with a pseudo-Anosov $v_j$.  We use the same notation
$v_j$ to denote the mapping class that acts as $v_j$ on $V^j$ and as
identity on $S\setminus V^j$.  A hierarchy path joining $\xi$ to
$g\nu$ projects onto a quasi-geodesic $\q_j$ in $\CC (V^j)$ containing
in a tubular neighborhood of radius $O(1)$ all the multicurves
$\partial Y$ where $Y\subsetneq V^j$ is a large domain for $\xi$ and
$g\nu$.  By eventually pre-composing $g$ with a power of $v_j$ (hence
with an element in the centralizer of $v$), we may assume that the sub-arc
of $\q_j$ contained in a $O(1)$--tubular neighborhood of the
translation axis of $v_j$ has length $\preceq t_{v_j}$, where
$t_{v_j}$ is the translation length of $v_j$.  Then, by eventually
replacing $v$ with a large enough power, we may assume that the set of
large domains for $\xi , g\nu$ that are proper sub-surfaces of $V^j$
has nothing in common with the set of large domains for $v\xi , vg\nu$
that are proper sub-surfaces of $V^j$. Hence they are all large
domains either for $\xi , v\xi$ or for $g\nu , vg\nu$.

    If $U^j$ is a large domain itself for $\xi , g\nu$ then by arguing as in
    the pseudo-Anosov case (and noting that the copy of $\Z^k$
    generated by the pseudo-Anosov components $v_j$ is in the
    centralizer
    of $v$) we may prove that, by eventually post-composing $g$ with
    an element in the centralizer of $v$, we may ensure that
    $\dist_{C(U^j)} (\xi , g\nu )\preceq \dist_{C(U^j)} (\xi , v\xi )
    + \dist_{C(U^j)} (g\nu , vg\nu )$.\endproof

\def\cprime{$'$}
\providecommand{\bysame}{\leavevmode\hbox to3em{\hrulefill}\thinspace}
\providecommand{\MR}{\relax\ifhmode\unskip\space\fi MR }
% \MRhref is called by the amsart/book/proc definition of \MR.
\providecommand{\MRhref}[2]{%
  \href{http://www.ams.org/mathscinet-getitem?mr=#1}{#2}
}
\providecommand{\href}[2]{#2}


\begin{thebibliography}{{Ham}09}

\bibitem[AK]{Algom-Kfir}
Y.~Algom-Kfir, \emph{{Strongly Contracting Geodesics in Outer Space}}, preprint
  {\textsc{arXiv:0812.1555}}.

\bibitem[Bal85]{Ballmann:rank}
Werner Ballmann, \emph{Nonpositively curved manifolds of higher rank}, Ann. of
  Math. (2) \textbf{122} (1985), no.~3, 597--609.

\bibitem[Bal95]{Ballmann:book}
\bysame, \emph{Lectures on spaces of nonpositive curvature}, DMV Seminar,
  vol.~25, Birkh\"auser Verlag, Basel, 1995, With an appendix by Misha Brin.

\bibitem[BC]{BehrstockCharney}
J.~Behrstock and R.~Charney, \emph{Divergence and quasi-morphisms of
  right-angled Artin groups}, preprint \textsc{arXiv:1001.3587}, 2010.

\bibitem[BDM09]{BehrstockDrutuMosher:thick1}
J.~Behrstock, C.~Dru\c{t}u, and L.~Mosher, \emph{{Thick metric spaces, relative
  hyperbolicity, and quasi-isometric rigidity}}, {Math. Annalen} \textbf{344}
  (2009), 543--595.

\bibitem[BDS]{BehrstockDrutuSapir:mcg}
J.~Behrstock, C.~Dru\c{t}u, and M.~Sapir, \emph{Median structures on asymptotic
  cones and homomorphisms into mapping class groups}, Proc. London Math. Soc. \textbf{102} (2011), no. 3, 503–-554.


\bibitem[Beh06]{Behrstock:asymptotic}
J.~A. Behrstock, \emph{{Asymptotic geometry of the mapping class group and
  {T}eichm\"{u}ller space}}, {Geom. Topol.} \textbf{10} (2006), 1523--1578.

\bibitem[BF]{BestvinaFujiwara:symmetric}
M.~Bestvina and K.~Fujiwara, \emph{A characterization of higher rank symmetric
  spaces via bounded cohomology}, preprint \textsc{arXiv:math/0702274}.

\bibitem[BH99]{Bridson-Haefliger}
M.~Bridson and A.~Haefliger, \emph{Metric spaces of non-positive curvature},
  Springer-Verlag, Berlin, 1999.

\bibitem[BM08]{BrockMasur:WPrelhyp}
J.~Brock and H.~Masur, \emph{Coarse and synthetic {Weil--Petersson} geometry:
  quasi-flats, geodesics, and relative hyperbolicity}, Geom. Topol. \textbf{12}
  (2008), no.~3, 2453--2495.

\bibitem[BN08]{BehrstockNeumann:qigraph}
J.~Behrstock and W.~Neumann, \emph{Quasi-isometric classification of graph
  manifold groups}, Duke Math. J. \textbf{141} (2008), no.~2, 217--240.

\bibitem[Bow08]{Bowditch:tightgeod}
B.~Bowditch, \emph{Tight geodesics in the curve complex}, Invent. Math.
  \textbf{171} (2008), no.~2, 281--300.

\bibitem[BS87]{BurnsSpatzier}
Keith Burns and Ralf Spatzier, \emph{Manifolds of nonpositive curvature and
  their buildings}, Inst. Hautes \'Etudes Sci. Publ. Math. (1987), no.~65,
  35--59.

\bibitem[DMS10]{DrutuMozesSapir}
Cornelia Dru{\c{t}}u, Shahar Mozes, and Mark Sapir, \emph{Divergence in
  lattices in semisimple {L}ie groups and graphs of groups}, Trans. Amer. Math.
  Soc. \textbf{362} (2010), no.~5, 2451--2505.

\bibitem[DR08]{Duchin-Rafi}
M.~Duchin and K.~Rafi, \emph{{Divergence of geodesics in Teichm\"uller space
  and the Mapping Class group}}, preprint {\textsc{arXiv:0611359}}, 2008.

\bibitem[DS05]{DrutuSapir:TreeGraded}
C.~Dru{\c{t}}u and M.~Sapir, \emph{Tree-graded spaces and asymptotic cones of
  groups}, Topology \textbf{44} (2005), 959--1058, {with an appendix by D. Osin
  and M. Sapir}.

\bibitem[Ger94a]{Gersten:divergence3}
S.M. Gersten, \emph{Divergence in 3-manifold groups}, Geom. Funct. Anal.
  \textbf{4} (1994), no.~6, 633--647.

\bibitem[Ger94b]{Gersten:divergence}
\bysame, \emph{Quadratic divergence of geodesics in {CAT(0)}-spaces}, Geom.
  Funct. Anal. \textbf{4} (1994), no.~1, 37--51.

\bibitem[Gro93]{Gromov:Asymptotic}
M.~Gromov, \emph{Asymptotic invariants of infinite groups}, Geometric Group
  Theory, Vol. 2 (Sussex, 1991) (G.~Niblo and M.~Roller, eds.), LMS Lecture
  Notes, vol. 182, Cambridge Univ. Press, 1993, pp.~1--295.

\bibitem[{Ham}]{Hamenstadt:announcement}
U.~{Hamenst\"adt}, \emph{{research announcement}}, 2010.

\bibitem[{Ham}09]{Hamenstadt:Linesminimaout}
\bysame, \emph{{Lines of Minima in Outer space}}, Preprint,
  {\textsc{arXiv:math/0911.3620}}, 2009.

\bibitem[JOR10]{JOR-conj}
R.~Ji, C.~Ogle, and B.~Ramsey, \emph{Relatively hyperbolic groups, rapid decay
  algebras and a generalization of the Bass conjecture}, J. Noncommut. Geom.
  \textbf{4} (2010), 83--124.

\bibitem[KL96]{KapovichLeeb:actions}
Michael Kapovich and Bernhard Leeb, \emph{Actions of discrete groups on
  nonpositively curved spaces}, Math. Ann. \textbf{306} (1996), no.~2,
  341--352.

\bibitem[KL97]{KapovichLeeb:Haken}
M.~Kapovich and B.~Leeb, \emph{Quasi-isometries preserve the geometric
  decomposition of Haken manifolds}, Invent. Math. \textbf{128} (1997), no.~2,
  393--416.

\bibitem[KL98]{KapovichLeeb:3manifolds}
\bysame, \emph{{$3$}-manifold groups and nonpositive curvature}, Geom. Funct.
  Anal. \textbf{8} (1998), no.~5, 841--852.

\bibitem[Mac]{Macura:polydiv}
Nata{\v{s}}a Macura, \emph{{CAT}(0) spaces with polynomial divergence of
  geodesics}, preprint 2011.

\bibitem[MM00]{MasurMinsky:complex2}
H.~Masur and Y.~Minsky, \emph{{Geometry of the complex of curves II:
  Hierarchical structure}}, Geom. Funct. Anal. \textbf{10} (2000), no.~4,
  902--974.

\bibitem[MSW05]{MSW:QTTwo}
L.~Mosher, M.~Sageev, and K.~Whyte, \emph{{Quasi-actions on trees II: Finite
  depth Bass-Serre trees}}, preprint, arXiv:math.GR/0405237, 2005.

\bibitem[OOS05]{OOS}
A.Yu. Olshanskii, D.~V. Osin, and M.~V. Sapir, \emph{Lacunary hyperbolic
  groups}. With an appendix by Michael Kapovich and Bruce Kleiner.
Geom. Topol. \textbf{13} (2009), no.~4, 2051--2140.

\bibitem[Osi06]{Osin:RelHyp}
D.~Osin, \emph{{Relatively hyperbolic groups: intrinsic geometry, algebraic
  properties, and algorithmic problems}}, Mem. Amer. Math. Soc. \textbf{179}
  (2006), no.~843, vi+100pp.

\bibitem[Pau01]{Pauls:NilpotentCAT(0)}
Scott Pauls, \emph{The large scale geometry in nilpotent {L}ie groups}, Commun.
  Anal. Geom. (2001), 951–982.

\bibitem[Sal]{Sale:thesis}
A.~Sale, Ph.D. thesis, University of Oxford, in preparation.

\bibitem[Ser80]{Serre:trees}
J.~P. Serre, \emph{Trees}, Springer, New York, 1980.

\bibitem[Tao11]{JTao:conj}
J.~Tao, \emph{Linearly bounded conjugator property for mapping class groups},
  preprint, \textsc{arXiv:1006.2341}, 2011.

\end{thebibliography}
\end{document}